\newcommand{\quash}[1]{}
\newtheorem{defin}{Definition}
\newtheorem{prop}{Proposition}
\newtheorem{nt}{Remark}
\newtheorem{Th}{Theorem}
\newtheorem{lemma}{Lemma}
\newtheorem{cons}{Corollary}
\newtheorem{example}{Example}
\newtheorem{defin-prop}{Definition-proposition}
\newfont{\ssdbl}{msbm8}
\newfont{\sdbl}{msbm9}
\newfont{\dbl}{msbm10 at 12pt}
\newcommand{\oo}{{\cal O}}
\newcommand{\res}{\mathop {\rm res}}
\newcommand{\Hom}{\mathop {\rm Hom}}
\newcommand{\Frac}{\mathop {\rm Frac}}
\newcommand{\dz}{\mathbb{Z}}
\newcommand{\dc}{\mathbb{C}}
\newcommand{\dq}{\mathbb{Q}}
\newcommand{\Ker}{{\rm Ker}\:}
\newcommand{\Image}{{\rm Im}\:}
\newcommand{\lrto}{\longrightarrow}
\newcommand{\dn}{\mathbb{N}}
\newcommand{\df}{\mathbb{F}}
\newcommand{\F}{{\bf F}}
\newcommand{\D}{{\cal D}}
\newcommand{\ord}{{\mathop{\rm ord}}}
\begin{document}

\author{
D. V. Osipov, A. N. Parshin
}

\title{Harmonic analysis on the rank-$2$ value group of a two-dimensional local field \thanks{The first author is partially supported by Laboratory of Mirror Symmetry NRU HSE, RF Government grant, ag.~\textnumero 14.641.31.0001.}}
\date{}

\maketitle

\begin{abstract}
In this work we construct  harmonic analysis on free Abelian groups of rank  $2$, namely:
we construct and investigate spaces of functions and distributions,
Fourier transforms, actions of discrete and extended discrete Heisenberg groups.
In case of the rank-$2$ value group of a two-dimensional local field with  finite last residue field we connect this harmonic analysis
with harmonic analysis on the two-dimensional local field, where the latter harmonic analysis was  constructed  in earlier works by the  authors.
\end{abstract}

\section{Introduction}

In the theory of zeta functions of algebraic varieties and schemes, one of the main problems is the analytic continuation and the functional equations of zeta and $L$-functions. For one-dimensional schemes, there is the  Tate-Iwasawa method that goes back to Riemann and Hecke. The method applies harmonic analysis on the adele groups of such schemes to a  solution of  these problems.

 The report~\cite{Par6} raised the question (see also work~\cite{Par5}) about the extension of the Tate-Iwasawa method to the case of two-dimensional schemes  of finite  type over $\mathop{\rm Spec} \dz$  using the theory of higher adeles and harmonic analysis constructed in works~\cite{OsipPar1,OsipPar1'}.

In work~\cite{Par4} a new version of the method was  introduced. This new version follows the lines of paper~\cite{Par3}, where  the well-known manipulations with formulas were removed and replaced  by functoriality and duality considerations.
The construction  was applied to a  new proof of  the functional equation for the $L$-functions. In the talks~\footnote{{\em Zeta-functions of schemes
in dimensions $1$ and $2$},
Steklov Mathematical Institute of RAS, seminar of departments of algebra and algebraic geometry (I.~R.~Shafarevich seminar), 7th and 14th  November 2017; {\em Fourier analysis and Poisson formulas on discrete groups and analytic
properties of zeta-functions of algebraic varieties}, Arithmetic and Analysis, A conference on the occasion of Christopher Deninger's 60th birthday, April 9 -  13, 2018, M\"unster University, Germany.} of 2017, 2018, the second author discussed the possible extension of this version of  the Tate-Iwasawa  method to the case of two-dimensional schemes, first of all to the algebraic surfaces defined over a finite field.

For the case of curves, the new approach is based on the transition from locally compact groups of adeles and ideles to discrete groups of divisors and classes of divisors on curves and their dual complex tori.
 This transition was made in paper~\cite{Par3}. In the above mentioned talks the case of algebraic surfaces defined over a finite field was considered and
  it was shown that the basic analytical problems of zeta and $L$-functions of algebraic surfaces can be reformulated as the task of construction of a  harmonic analysis (the Fourier transform and the Poisson formula) on discrete groups of  $0$-cycles and classes of $0$-cycles on an algebraic surface. In case of an algebraic surface it is these groups that are connected with the zeta-function of the surface just as in case of a curve its zeta-function is connected with groups of divisors (which are $0$-cycles in this case).
  The principal difference between a  surface and a curve is that in case of surface there are cycles of
two types: dimension $0$ ($0$-cycles) and dimension $1$ (divisors).

   Earlier in papers of authors~\cite{OsipPar1} and \cite{OsipPar1'}, harmonic analysis was constructed on adele groups of an algebraic surface
   (function and distribution spaces, Fourier transforms, Poisson formulas). These groups appear from adele ring $\mathbb A$ of the algebraic surface. With these groups one connects subgroups of the multiplicative group  $\mathbb A^*$ of invertible elements.
   In paper~\cite{Par6} the discrete adele groups, in particular various groups of divisors,  were constructed in this situation. The question on construction of harmonic analysis on these groups (such that this analysis is based on
   already constructed harmonic analysis from papers~\cite{OsipPar1} and~\cite{OsipPar1'}) by analogy with the case of curves in~\cite{Par3}
   was not considered at that time. After the appearance of paper~\cite{Par4} and the  above mentioned reformulation of properties of the zeta-function of a surface in the  form of  analysis on the group of $0$-cycles the following program appears.

First, one needs to transfer harmonic analysis of papers~\cite{OsipPar1} and \cite{OsipPar1'}
to discrete groups of divisors and divisor classes. Then, a transition from the groups associated with divisors
to the groups of $0$-cycles, which are directly related to the study of zeta- and $L$-functions, is necessary.  Thus, the entire program on an algebraic surface
is divided into the following two parts:

\begin{description}
\item[1)]	\label{p1}
the construction of harmonic analysis on adele discrete groups, in particular on groups of divisors and their classes;
\item[2)]	
establishing the connection of such harmonic analysis with harmonic analysis on adele discrete groups associated with groups of $0$-cycles
and their classes.
\end{description}

The second part is much more complicated, but, nevertheless, it can be assumed that the main tool for establishing such a connection should be
adele Heisenberg groups on an algebraic surface, which were also introduced in~\cite{Par6}.
In this paper we consider only the first part and only the local discrete groups associated with fixed two-dimensional local field of
an algebraic surface.  (However, most of the work also applies to an arbitrary two-dimensional local field with  finite last residue field.)

The fact is that  the adele discrete groups which we are interested in and which are associated with a whole algebraic surface
are products (direct sums, adele products) of local discrete groups  associated with two-dimensional local fields.
So, in this paper we  implement  the first step of part~$\bf 1)$ of this program, consisting in considering the local case, i.e. for two-dimensional local fields $K$ with finite last residue field and associated  free Abelian groups $ \Gamma_K$ of rank $2$, which are the rank $2 $-value groups of these fields, and noncommutative discrete Heisenberg groups.
We note that the complexity of the problem increases dramatically when one moves to the two-dimensional case. In the case of dimension $1$, the constructions whose  generalization is the object of this work   occupies $3$ pages in paper~\cite{Par3}.
Moreover, in~\cite{Par3} there is also a transition to the complex torus, which is dual to the rank-$1$ value group  (this is the group $ \mathbb Z $).

In the case of a rank-$2$ group, such a transition is also possible and this will be done in a separate paper.

In this paper we construct harmonic analysis on function and distribution spaces on a free Abelian group  $\Gamma$ of rank $2$ such that
the following central extension is fixed:
$$
0 \lrto \dz \lrto \Gamma \stackrel{\pi}{\lrto} \dz \lrto 0 \mbox{.}
$$
(If $\Gamma$ is $\Gamma_K$
which is the rank-$2$ value group for a two-dimensional local field $K$, then
$\pi$
is a homomorphism given by the discrete valuation 
 on the field
 $K$.)

For arbitrary fixed (nonzero complex) number $r \in \dc^*$
we define infinite-dimensional $\dc$-vector function and distribution spaces on the group $\Gamma$:
${\mathcal{D}}_{+, \alpha, r}(\Gamma)$
and ${\mathcal{D}}'_{+, \alpha, r}(\Gamma)$,
where the index $\alpha \in \Gamma \cup \pi(\Gamma)$.
These spaces are subspaces or quotient spaces of direct products or direct sums
of tensor products of certain function and distribution spaces on $\dz$-torsors $\pi^{-1}(k)$, where $k \in \dz$,
with canonical one-dimensional $\dc$-vector spaces which depend on $k$ and $\alpha$.
Moreover, the function and distribution spaces on a $\dz$-torsor $\pi^{-1}(k)$
coincide with the spaces of coinvariants and invariants of the function and distribution spaces on a one-dimensional vector space over a one-dimensional local field when the $\dz$-torsor appears from this one-dimensional space after the action of the group of invertible elements of the discrete valuation ring of this field.
We note that linear conditions which define subspaces or quotient spaces   ${\mathcal{D}}_{+, \alpha, r}(\Gamma)$
and ${\mathcal{D}}'_{+, \alpha, r}(\Gamma)$ from infinite direct products or direct sums
depend also on a number  $r \in \dc^*$.

We define and investigate
a natural  $\dc$-linear pairing between the spaces   ${\mathcal{D}}_{+, \alpha, r}(\Gamma)$
and ${\mathcal{D}}'_{+, \alpha, r}(\Gamma)$. We define also Fourier transforms on these spaces.
If the group $\Gamma$ is the group  $\Gamma_K$,
and the last residue field of a two-dimensional local field  $K$ is a finite field  $\df_q$,
then for  $r = q$  the constructed function and distribution spaces  and Fourier transforms on them
are connected with the function and distribution spaces on  $K$ and  Fourier transforms there
(they were constructed and investigated in~\cite{OsipPar1} and~\cite{OsipPar1'}).

Besides, the groups which are isomorphic to the discrete Heisenberg group  ${\rm Heis}(3, \dz)$
and the  extended discrete Heisenberg group ${\rm Heis}(3, \dz) \rtimes \dz$ naturally act on the spaces
${\mathcal{D}}_{+, \alpha, r}(\Gamma)$
and  ${\mathcal{D}}'_{+, \alpha, r}(\Gamma)$.

The work consists of $6$ sections. Let us proceed to the exposition  of its content.

Section~\ref{se-2} of the paper gives a description of general properties of one- and two-dimensional local fields: their examples,  valuation rings and  value groups, structure of linear order on the rank-$2$ value group, disjoint decompositions of these fields.

Reminders about the function  and distribution spaces on two-dimensional local fields (following papers~\cite{OsipPar1} and~\cite{OsipPar1'}), as well as new properties of these spaces, are given in section~\ref{Funct_distr}.
Here we describe the $\dc$-vector function and distribution spaces
$\D_{\alpha}(K)$ and $\D'_{\alpha}(K)$
 on a two-dimensional local field $K$ with a partial topology (i.e, on a two-dimensional local field with  finite last residue field, and this local field is endowed with certain additional data, see section~\ref{part-topol}.)
 Here an index $\alpha$
 belongs to the set $\Gamma_K \cup \pi(\Gamma_K)$.
 Besides, in section~\ref{charcteristic}
 the characteristic functions for  fractional ideals of the field  $K$---delta-functions---are described.
 Further, in sections~\ref{inv-sec}-\ref{inv-dual}
 we give a detailed description of the spaces of invariants and coinvariants with respect to the action of the group of invertible elements
 $(\oo'_K)^*$
  of the rank-$2$ valuation ring of the field $K$
  and we prove  that the natural pairing between these spaces is non-degenerate.

The main part of the paper is section~\ref{dis},  where we define  function and distribution   spaces
${\mathcal{D}}_{+, \alpha, r}(\Gamma)$
and ${\mathcal{D}}'_{+, \alpha, r}(\Gamma)$
on a group $\Gamma$ (with a fixed homomorphism $\pi$).
We describe the connection of these spaces with the spaces of coinvariants and invariants
of the function and distribution spaces on a two-dimensional local field $K$ with last residue field $\df_q$
(and with certain additional data from section~\ref{part-topol}) when $\Gamma = \Gamma_K$ and $r = q$.

In section~\ref{Sft} we construct and investigate properties of the Fourier transforms on the spaces
${\mathcal{D}}_{+, \alpha, r}(\Gamma)$
and ${\mathcal{D}}'_{+, \alpha, r}(\Gamma)$.  In case  $\Gamma = \Gamma_K$ and $r =q$, we describe the connection of these Fourier transforms with
the Fourier transforms from harmonic analysis constructed in paper~\cite{OsipPar1} (see section $5$ there).

In section~\ref{Hei} we construct and investigate an action of the discrete Heisenberg group
${\rm Heis}(3, \dz)$
and extended discrete Heisenberg group  ${\rm Heis}(3, \dz) \rtimes \dz$
on the spaces  ${\mathcal{D}}_{+, \alpha, r}(\Gamma)$
and ${\mathcal{D}}'_{+, \alpha, r}(\Gamma)$.

In conclusion, we would like to note several works written quite a long time ago and not directly related to the theory of higher local fields and adeles, but nevertheless these works  had a great influence on our research.
This is a book by A.~Pressley and G.~Segal~\cite{PS},  in which constructions of loop groups on smooth manifolds are presented.
An analogy between loop groups and two-dimensional local fields was noted in~\cite{Par6}
and the existing ``loop rotation'' operation was transferred to the theory of two-dimensional local fields and served as the starting point for the definition of the extended discrete Heisenberg group.
Another important work is a paper by E.~Arbarello,  C.~De~Concini, V.~G.~Kac~\cite{ADK}, which led to the categorical constructions of extensions and symbols in the works of the first author and these constructions are partially used in this work.

\section{Two-dimensional local fields} \label{se-2}

\subsection{Definition and examples}

We recall the following definition.

\begin{defin}
A field $L$ is a one-dimensional local field (or, simply, a local field) with  residue field $k$ if $L$ is a complete  discrete valuation field with  residue field $k$.

A field $K$ is a two-dimensional local field with  last residue field $k$ if $K$ is a complete  discrete valuation field with  residue field $\bar{K}$ such that $\bar{K}$ is a one-dimensional local field with residue field $k$.
\end{defin}

Typical examples of two-dimensional local fields when $k$ is a finite field are the following:
\begin{equation}  \label{examples}
\df_q ((u))((t))  \, \mbox{,} \qquad \dq_p((t)) \, \mbox{,}  \qquad \dq_p\{\{ u \}\}  \, \mbox{,}
\end{equation}
where the last field is the completion of the field $\Frac \dz_p[[u]]$ with respect to the discrete valuation given by the height-$1$  principal prime ideal $(p)$. Up to finite extensions of two-dimensional local fields, this list of examples is full. Besides, two-dimensional local fields appear naturally from flags of subschemes on algebraic or ariphmetic surfaces, see more, e.g., in~\cite{Osip1}.

Let $K$ be a two-dimensional local field with  last residue field $k$. By $\oo_K$ we denote the discrete valuation ring associated with the discrete  valuation of $K$.
Let $\oo_{\bar{K}}$ be the discrete valuation ring associated with the discrete valuation of $\bar{K}$. Then we have
$$
K \subset \oo_K  \stackrel{p_1}{\lrto} \bar{K}  \supset \oo_{\bar{K}}  \stackrel{p_2}{\lrto} k  \, \mbox{,}
$$
where $p_1$ and $p_2$ are the natural maps.

We denote a ring:
 $$\oo_{K}' = p_1^{-1} (\oo_{\bar{K}}) \, \mbox{.}$$

For example, if $K = k((u))((t))$, then
$$
\oo_K = k((u))[[t]]  \, \mbox{,} \qquad \bar{K} = k((u))  \, \mbox{,}  \qquad \oo_{\bar{K}}= k[[u]]  \, \mbox{,} \qquad \oo_K' =
k[[u]] + tk((u))[[t]]  \, \mbox{.}
$$

\smallskip

Let $t$ be a local parameter in the discrete valuation ring $\oo_K$. We denote $\oo_K$-modules
$$
\oo(n) = t^n \oo_K  \subset K \, \mbox{,} \qquad  \mbox{where} \qquad n \in \dz \, \mbox{.}
$$
We note that $\oo(n)$ does not depend on the choice of $t$, and the set of $\oo_K$-modules  $\oo(n)$ for all $n \in \dz$ is the set of all $\oo_K$-submodules of $K$.

\subsection{The value group $\Gamma_K$}

Let $K$ be a two-dimensional filed with  last residue field $k$.

\begin{defin}
We define a group
$$
\Gamma_K = K^* / (\oo_K')^*  \, \mbox{.}
$$
\end{defin}

There is a canonical exact sequence
\begin{equation}  \label{sp0}
0 \lrto \dz  \lrto \Gamma_K \stackrel{\pi}{\lrto} \dz  \lrto 0  \, \mbox{,}
\end{equation}
where the map $\pi$ is induced by the discrete valuation  $K^*  \lrto \dz$, and the group $\Ker \pi$ is the value group of the discrete valuation of  $\bar{K}$,
which is canonically isomorphic to $\dz$.

Fixing of a local parameter $t$ of $\oo_K$ provides a splitting of  exact sequence~\eqref{sp0}. This gives an isomorphism:
\begin{equation}  \label{spl}
\Gamma_K  \simeq \dz \oplus \dz  \, \mbox{,}
\end{equation}
where the first group $\dz$ corresponds to $\pi(\Gamma_K)$.

A pair of elements $u, t$ from $\oo_{K}'$ is called local parameters of a two-dimensional local field $K$
if $t$ is a local parameter of the discrete valuation ring $\oo_K$ and $p_1(u)$ is a local parameter of the discrete valuation ring $\oo_{\bar{K}}$.

If we choose another local parameter $t'$ of $\oo_K$, then $t' = t u^k a$, where $a \in (\oo_K')^*$, and  the isomorphism
in formula~\eqref{spl} will change by the action of a matrix:
$$\left(
\begin{matrix}
1 & k \\
0 & 1
\end{matrix} \right) \, \mbox{.}
$$
Thus, we have an explicit isomorphism
\begin{equation}  \label{ch}
 \dz \oplus \dz \ni { (n \oplus p)  \longmapsto (n  \oplus n +pk)}  \in \dz \oplus \dz \, \mbox{,}
\end{equation}
which gives also  a homomorphism $\dz  \lrto  {\rm GL}(2, \dz)= {\rm Aut \,} (\dz \oplus \dz)$, where $k \in \dz$.

The group $\Gamma_K$ is naturally linearly ordered. Indeed, for $\gamma_1, \gamma_2$ from $\Gamma_K$ we put
$$
\gamma_1 > \gamma_2   \qquad \mbox{iff}  \qquad
\pi(\gamma_1) > \pi(\gamma_2)  \qquad \mbox{or} \qquad
 \pi(\gamma_1) = \pi(\gamma_2) \, \mbox{,}
\quad   \Ker \pi \ni \gamma_1 - \gamma_2 >0  \, \mbox{.}
$$
This order goes to lexicographical order after the fixing of splitting  of exact sequence~\eqref{sp0} and consequently of isomorphism~\eqref{spl}.

Thus, we have the rank-$2$ valuation
$$ \nu_K \; :   \; K^* \lrto \Gamma_K \mbox{.}$$

For any $\gamma \in \Gamma_K$ we denote $\oo_{K}'$-submodule of $K$:
$$
\oo'(\gamma) = (0) \sqcup \coprod_{\gamma' \ge \gamma} \nu_K^{-1}(\gamma')  = a \oo'_K \, \mbox{,}
$$
where $(0)$ is the zero element of the group $K$, and $a \in K^*$ such that $\nu_K(a)= \gamma$.

From~\cite[Theorem~1]{Par1} we know that any $\oo_K'$-submodule of $K$ is either $\oo(n)$ for appropriate $n \in \dz$
or $\oo'(\gamma)$ for appropriate $\gamma \in \Gamma_K$. Moreover,  $\oo_K'$-submodules $\oo(n)$ and  they alone are infinitely generated $\oo_K'$-submodules.

\medskip

To describe all $\oo_K'$-submodules of $K$ at once, we introduce a set
$$\breve{\Gamma}_K = \Gamma_K \cup  \pi(\Gamma_K)  \, \mbox{.}$$

We will write an element from $\Gamma_K \subset \breve{\Gamma}_K$ as a pair $(n,p)$, where $n \in \pi(\Gamma_K) = \dz$, ${p \in \pi^{-1}(n)}$,
and an element from $\pi(\Gamma_K) = \breve{\Gamma}_K \setminus \Gamma_K$ as a pair $(n , - \infty)$, where $n \in \dz$.
We have a natural map
$$\breve{\pi} \; :  \;  \breve{\Gamma}_K  \lrto \dz $$
which extends the map $\pi$ and it is the identity map on $ \pi(\Gamma_K) $.

Then the order on $\Gamma_K$ is extended to a linear order on $\breve{\Gamma}_K$
by additional rules:
\begin{eqnarray*}
\mbox{for}  \quad \alpha_1, \alpha_2 \in \breve{\Gamma}_K \quad  \mbox{we have} \quad \alpha_1  <   \alpha_2   \quad \mbox{if} \quad \breve{\pi}(\alpha_1) < \breve{\pi}(\alpha_2) \, \mbox{;}\\
(n, -\infty)  < (n, p) \, \mbox{,} \quad \mbox{where} \quad p \in \pi^{-1}(n) \, \mbox{.}
\end{eqnarray*}

We note that the group $\Gamma_K$ acts on the set  $\breve{\Gamma}_K$ such that  this action extends the group operation in $\Gamma_K$, and the formula for the remaining action of $\Gamma_K$ to $\breve{\Gamma}_K \setminus \Gamma_K$ is:
\begin{eqnarray*}
(n, p) \circ (m, -\infty) = (n+m, -\infty) \, \mbox{.}
\end{eqnarray*}

\smallskip

Using that $\pi(\Gamma_K)=\dz$ we canonically identify now the set of all $\oo_{K}'$-submodules in $K$ with the set $\breve{\Gamma}_K$.
For any $\alpha \in \breve{\Gamma}_K$ we denote by $\oo'(\alpha) \subset K$ the corresponding $\oo_K'$-submodule (if $\alpha = (n, -\infty) \in \pi(\Gamma_K)$, then $\oo'(\alpha)= \oo(n)$).
We note that for any $\alpha_1, \alpha_2 \in \breve{\Gamma}_K$ we have
$$
\alpha_1 \le \alpha_2  \qquad \mbox{iff} \qquad \oo'(\alpha_1) \supset \oo'(\alpha_2)  \, \mbox{.}
$$

\subsection{Canonical decompositions}  \label{can-dec}

Let $K$ be a two-dimensional field with  last residue field $k$.

Let $m_{\bar{K}}$ be the maximal ideal of $\oo_{\bar{K}}$. Then $m_K'= p_1^{-1} (m_{\bar{K}})$
is the maximal ideal of~$\oo_{K}'$.

We introduce the following decompositions for $K$ and $\bar{K}$.

For any $n \in \dz$ and any $\gamma \in \Gamma_K$ we denote
$$\oo_{\bar{K}}^*(n) = m_{\bar{K}}^n \setminus m_{\bar{K}}^{n+1} \subset \bar{K}^* \, \mbox{, } \qquad \qquad \oo^*(\gamma)= \oo'(\gamma) \setminus
m_{K}' \oo'(\gamma) \subset K^* \, \mbox{.}
$$
Then we have disjoint unions of sets:
$$
\bar{K} =(0)  \sqcup  \coprod_{n \in \dz} \oo^*_{\bar{K}}(n)  \, \mbox{,}  \qquad \qquad
K = (0)  \sqcup \coprod_{\gamma \in \Gamma_K} \oo^*(\gamma)  \, \mbox{,}
$$
where $(0)$ is the identity element of the group $\bar{K}$ or the group $K$.

\section{Function and distribution spaces on $K$}  \label{Funct_distr}
\subsection{Two-dimensional local fields with partial topologies}   \label{part-topol}
Let $K$ be a two-dimensional local field with   finite last residue field $\df_q$.

The additive group of the  field $\bar{K}$ is a locally compact Abelian group. Therefore for any $n \in \dz$ the one-dimensional  $\bar{K}$-vector space
$\oo(n)/ \oo(n+1)$ is a totally disconnected locally compact Abelian group (or, equivalently, a locally profinite Abelian group).

\begin{defin}
We will say that
a two-dimensional local field $K$ with  finite last residue field is endowed with  a partial topology if the following conditions are satisfied.
\begin{enumerate}
\item For any $n \le m \in \dz$ the  Abelian group $\oo(n)/\oo(m)$ is endowed with  a topology such that this group is a totally disconnected locally compact group, and the topology on the group $\oo(n)/ \oo(n+1)$ (when $m =n+1$) comes from the topology on~$\bar{K}$.
\item For any $n  \le m \le k \in \dz$ the topology on the subgroup $\oo(m) / \oo(k)$ is induced from the topology on the group $\oo(n)/\oo(k)$, and the topology on the  quotient group $\oo(n) / \oo(m)$ is the quotient topology from the group $\oo(n)/ \oo(k)$.
\item For any $n \le m \in \dz$, for any $a \in K^*$ the map of multiplication on $a$ is a homeomorphism
from $\oo(n)/\oo(m)$ to $a \oo(n) / a \oo(m)$.
\end{enumerate}
\end{defin}

\begin{nt}\em
Clearly, if $K$ is isomorphic to one of the fields from~\eqref{examples}  (with the fixed isomorphisms), then $K$ is endowed with  a partial topology.
Besides, if $K$ appears from an algebraic or arithmetis surface, then  the construction of $K$ uses composition of projective and inductive limits,
see, e.g.,~\cite{Osip1}.
Therefore, in this case  $K$ possesses the natural topology after these limits, and $K$ is endowed with  a partial topology given by  the quotient and induced topology on $\oo(n)/\oo(m)$.
Moreover, if $K$ is a two-dimensional local field with  residue field $\bar{K}$ which is isomorphic to the field $\df_q((u))$, then on $K$ there is a unique natural topology, see~\cite[\S~7.1-7.2]{K}.\footnote{We note that in the $n$-dimensional case, it was proved in~\cite[Prop.~2.1.21, Prop.~3.3.6]{Y} that the natural topology on the field isomorphic to $k((t_1))  \ldots ((t_n))$, where $\mathop{{\rm char}}  k  >0 $ and $k$ is perfect, is unique and this topology  coincides
with a topology which naturally appears when the field comes from an $n$-dimensional algebraic variety.} And  in this case $K$ is also endowed with  a partial topology given by the quotient and induced topology on $\oo(n)/\oo(m)$.
\end{nt}

For any $\alpha_1 \le  \alpha_2 \in \breve{\Gamma}_K$ there are $n \le m  \in \dz$ such that
$$
\oo(n)  \supset \oo'(\alpha_1)  \supset \oo'(\alpha_2) \supset \oo(m ) \, \mbox{.}
$$
Therefore the group $\oo'(\alpha_1)  / \oo'(\alpha_2)$
is locally compact with the induced and quotient topology from $\oo(n)/\oo(m)$ (this topology does not depend on the choice of $n,m$).
Thus, for any $\alpha_1 \le  \alpha_2 \le  \alpha_3 \in \breve{\Gamma}_K$ we have a short exact sequence of locally compact Abelian groups
\begin{equation} \label{short-sequence}
0 \lrto  \oo'(\alpha_2)  / \oo'(\alpha_3)  \lrto \oo'(\alpha_1)  / \oo'(\alpha_3)  \lrto \oo'(\alpha_1)  / \oo'(\alpha_2)  \lrto 0  \, \mbox{,}
\end{equation}
where the subgroup $\oo'(\alpha_2)  / \oo'(\alpha_3)$ is closed in $\oo'(\alpha_1)  / \oo'(\alpha_3)$ and has the induced topology, and the group
$\oo'(\alpha_1)  / \oo'(\alpha_2)$ has the quotient topology.

\subsection{Spaces $\D_{\alpha}(K)$ and $\D'_{\alpha}(K)$} \label{Spaces}
Let $K$ be a two-dimensional local field with a partial topology.

We recall constructions of function and distribution spaces $\D_{\alpha}(K)$ and $\D'_{\alpha}(K)$, where $\alpha \in \breve{\Gamma}_K$,
from~\cite[\S~5]{OsipPar1} and~\cite[\S~7]{OsipPar1'}.  But we will use another notation, see more explanation in Remark~\ref{notation} below.

For any Abelian totally  disconnected locally compact group $H$ let $\D(H)$ be the $\dc$-vector space of $\dc$-valued locally constant functions with compact support on $H$.
Let
\begin{equation}  \label{dual_1}
\D'(H) = \Hom\nolimits_{\dc} (\D(H), \dc)
\end{equation}
be the space of distributions on $H$.

For any $\alpha_1, \alpha_2 \in \breve{\Gamma}_K$ we define a  one-dimensional $\dc$-vector space $\mu(\alpha_1, \alpha_2)$:
\begin{eqnarray*}
\mu(\alpha_1, \alpha_2) = \mu(\oo'(\alpha_2)/ \oo'(\alpha_1)) \, \quad \mbox{if} \quad \alpha_1 \ge \alpha_2 \, \mbox{,} \\
\mu(\alpha_1, \alpha_2) = \mu(\alpha_2, \alpha_1)^*
=  \Hom\nolimits_{\dc} ( \mu(\alpha_2, \alpha_1)  , \dc)
\, \quad \mbox{if} \quad \alpha_1 \le \alpha_2 \, \mbox{,}
\end{eqnarray*}
where $\mu(\oo'(\alpha_2)/ \oo'(\alpha_1))$ is the space of all invariant $\dc$-valued measures (including the zero) on the locally compact Abelian group $\oo'(\alpha_2)/ \oo'(\alpha_1)$.
{ (We note also that the space $\mu(\oo'(\alpha_2)/ \oo'(\alpha_1))$ is canonically embedded into the space
$\D'(\oo'(\alpha_2)/ \oo'(\alpha_1))$ via  integration of a function against a measure.)}

From short exact sequence~\eqref{short-sequence} we have a canonical isomorphism for any \linebreak ${\alpha_1 \le  \alpha_2 \le  \alpha_3 \in \breve{\Gamma}_K}$:
\begin{equation}  \label{measures}
\mu(\alpha_1, \alpha_2) \otimes_{\dc} \mu(\alpha_2, \alpha_3)  \lrto \mu(\alpha_1, \alpha_3)  \, \mbox{.}
\end{equation}

We fix $\alpha \in \breve{\Gamma}_K$.

For any $\alpha_1 \le  \alpha_2 \le  \alpha_3 \in \breve{\Gamma}_K$ from short exact sequence~\eqref{short-sequence} we have a canonical map called direct image, which is given by integration { (along fibres)} of a function against a measure:
$$
\D(\oo'(\alpha_1) / \oo'(\alpha_3)) \otimes_{\dc} \mu(\alpha_3, \alpha_2) \lrto \D(\oo'(\alpha_1) / \oo'(\alpha_2)) \, \mbox{.}
$$
Hence and from~\eqref{measures} we have a map:
$$
\D(\oo'(\alpha_1) / \oo'(\alpha_3)) \otimes_{\dc} \mu(\alpha_3, \alpha) \lrto \D(\oo'(\alpha_1) / \oo'(\alpha_2)) \otimes_{\dc} \mu(\alpha_2, \alpha) \, \mbox{.}
$$
{ Besides, by restriction of functions we have a canonical map called inverse image:
$$
\D(\oo'(\alpha_1) / \oo'(\alpha_3))  \lrto \D(\oo'(\alpha_2) / \oo'(\alpha_3))  \, \mbox{.}
$$
 This gives a map:
 $$
\D(\oo'(\alpha_1) / \oo'(\alpha_3)) \otimes_{\dc} \mu(\alpha_3, \alpha) \lrto \D(\oo'(\alpha_2) / \oo'(\alpha_3)) \otimes_{\dc} \mu(\alpha_3, \alpha) \, \mbox{.}
$$
}

On the dual side, we have analogous maps with the spaces $\D'(\cdot)$. This leads to the following well-defined
function and distribution spaces
 after passing to projective or inductive limits:
\begin{equation} \label{D}
\D_{\alpha}(K)= \mathop{\lim_{\longleftarrow}}_{\alpha_1 \in \breve{\Gamma}_K } \, \mathop{\lim_{\longleftarrow}}_{\alpha_1 \le \alpha_2 \in \breve{\Gamma}_K} \D(\oo'(\alpha_1) / \oo'(\alpha_2)) \otimes_{\dc} \mu(\alpha_2, \alpha)  \, \mbox{,}
\end{equation}
\begin{equation}  \label{D'}
\D'_{\alpha}(K)= \mathop{\lim_{\lrto}}_{\alpha_1 \in \breve{\Gamma}_K } \, \mathop{\lim_{\lrto}}_{\alpha_1 \le \alpha_2 \in \breve{\Gamma}_K} \D'(\oo'(\alpha_1) / \oo'(\alpha_2)) \otimes_{\dc} \mu(\alpha, \alpha_2)  \, \mbox{.}
\end{equation}

Obviously, for any $\alpha, \beta \in \breve{\Gamma}_K$ there are canonical isomorphisms
\begin{equation}  \label{change}
\D_{\alpha}(K)  \otimes_{\dc} \mu(\alpha, \beta) \lrto \D_{\beta}(K)  \, \mbox{,} \qquad
\D'_{\alpha}(K)  \otimes_{\dc} \mu( \beta, \alpha) \lrto \D'_{\beta}(K)  \, \mbox{.}
\end{equation}
and a non-degenerate pairing:
\begin{equation}  \label{pairing-D}
\D_{\alpha}(K)  \times \D'_{\alpha}(K)  \lrto \dc \, \mbox{.}
\end{equation}

Similarly, for any $\gamma \in \breve{\Gamma}_K$ and $\alpha \ge \gamma$ we have function and distribution spaces on the space~$\oo'(\gamma)$:
$$
\D_{\alpha}( \oo'(\gamma)  )=  \mathop{\lim_{\longleftarrow}}_{\alpha_2 \ge \gamma  \in \breve{\Gamma}_K} \D(\oo'(\gamma) / \oo'(\alpha_2)) \otimes_{\dc} \mu(\alpha_2, \alpha)  \, \mbox{,}
$$
\begin{equation}  \label{d_al}
\D'_{\alpha}(\oo'(\gamma))= \mathop{\lim_{\lrto}}_{\alpha_2  \ge \gamma  \in \breve{\Gamma}_K} \D'(\oo'(\gamma) / \oo'(\alpha_2)) \otimes_{\dc} \mu(\alpha, \alpha_2)  \, \mbox{.}
\end{equation}

\subsection{Analogs of characteristic functions (or distributions) on $K$}  \label{charcteristic}
We keep all assumptions from section~\ref{Spaces}. We fix $\alpha \in  \breve{\Gamma}_K$.

For any $\gamma \in \breve{\Gamma}_K$,  $b \in \mu(\alpha, \gamma)$ we construct an element $\delta_{\gamma, b} \in  \D'_{\alpha}(K)  $, which can be seen for $\oo'(\gamma)$ as an analog of the characteristic function for a fractional ideal in one-dimensional case.

There are canonical maps (inverse and direct images):
$$
\Lambda^* \; : \; \D_{\alpha} (K) \otimes_{\dc}    \mu (\alpha, \max(\alpha, \gamma))  \lrto \D_{\max(\alpha, \gamma)} (\oo'(\gamma))  \, \mbox{,}
$$
$$
\Lambda_* \; : \;  \D'_{\max(\alpha, \gamma)} (\oo'(\gamma))    \otimes_{\dc}   \mu (\alpha, \max(\alpha, \gamma))  \lrto   \D'_{\alpha}(K)  \, \mbox{,}
$$
where these maps are defined either as the map from  the projective  limit to the projective limit with respect to  the subset of indices or as the map from the inductive  limit  with respect to the subset of indices to the inductive limit with respect to the full set of indices together with tensor product on  the one-dimensional $\dc$-vector space  $\mu (\alpha, \max(\alpha, \gamma))$ as in formulas~\eqref{change}.
Besides, the maps $\Lambda^*$ and $\Lambda_*$ are conjugate with respect to the pairing~\eqref{pairing-D} and its analog on $\oo'(\gamma)$.
(More on direct and inverse images $\Lambda_*$ and $\Lambda^*$ it  is written in~\cite[\S~5.6-5.7]{OsipPar1}.)

According to~\cite[\S~5.9.1]{OsipPar1},  for any element $c \in \mu(\max(\alpha, \gamma), \gamma)$
there is an element ${{\bf 1}_c \in  \D'_{\max(\alpha, \gamma)} (\oo'(\gamma)) }$ which is canonically constructed in the following way. We consider formula~\eqref{d_al} with the change of $\alpha$ to $\max(\alpha, \gamma)$ and put in this formula $\alpha_2 = \gamma$. Then we insert
$$
1 \in
\D'(\oo'(\gamma)/ \oo'(\gamma)) = \dc \, \qquad \mbox{and}  \qquad
 c \in \mu(\max(\alpha, \gamma), \gamma)
$$
in formula~\eqref{d_al}:
$$
{{\bf 1}_c = 1 \otimes c \in   \D'(\oo'(\gamma)/ \oo'(\gamma))  \otimes_{\dc}  \mu(\max(\alpha, \gamma), \gamma)    \subset \D'_{\max(\alpha, \gamma)} (\oo'(\gamma)) }  \, \mbox{.}
$$

Now for $d \in \mu (\alpha, \max(\alpha, \gamma))$ and $b = d \otimes c \in \mu(\alpha, \gamma)$ we define an analog of the characteristic function
for $\oo'(\gamma)$:
$$
\delta_{\gamma, b}  = \Lambda_*({\bf 1}_{c} \otimes d)  \in \D'_{\alpha}(K) \, \mbox{.}
$$

More direct definition of the element $\delta_{\gamma, b}$ can be given as following. We consider $\alpha_1 = \alpha_2 = \gamma$
in formula~\eqref{D'}.
Then we put $1 \in \D'(\oo'(\gamma)/ \oo'(\gamma)) = \dc$ and  $b \in \mu(\alpha, \gamma)$ in this formula. We have
$$
\delta_{\gamma, b} = 1 \otimes b  \in \D'(\oo'(\gamma)/ \oo'(\gamma)) \otimes_{\dc}  \mu(\alpha, \gamma)  \subset \D'_{\alpha}(K)  \, \mbox{.}
$$

\begin{nt} \em
The construction of the element $\delta_{\gamma, b}$ can be generalized to the construction of an element
$\delta_{\gamma,\lambda, f,b }$,  where $\gamma$ and $b$ are as above, i.e.  $\gamma \in \breve{\Gamma}_K$,  $b \in \mu(\alpha, \gamma)$, and $\lambda \ge \gamma \in \breve{\Gamma}_K$, and  $f $ is any locally constant function on the locally compact Abelian group $\oo'(\gamma)/ \oo'(\lambda)$.

Indeed, we fix any non-zero element
$$h \in \mu (\lambda, \gamma)  \subset \D'(\oo'(\gamma)/ \oo'(\lambda))   \, \mbox{.}$$
Then $f_h \in \D'(\oo'(\gamma)/ \oo'(\lambda))$ is given as the linear functional
$$g \longmapsto h(fg) \,  \mbox{,} \quad g \in \D(\oo'(\gamma)/ \oo'(\lambda))  \, \mbox{.}$$
 Besides, $h^{-1} \in \mu (\gamma, \lambda)$  and $h \otimes h^{-1} =1 \in \dc$.
 Now we define
$$
\delta_{\gamma,\lambda, f,b} = f_h \otimes (b \otimes h^{-1}) \in \D'(\oo'(\gamma)/ \oo'(\lambda)) \otimes_{\dc}  \mu(\alpha, \lambda) \subset \D'_{\alpha}(K)  \, \mbox{,}
$$
and the element $\delta_{\gamma,\lambda, f,b}$ does not depend on the choice of $h$.
Then we have $$
\delta_{\gamma,b} = \delta_{\gamma,\lambda, f,b}
$$
for any $\lambda \ge \gamma$ and where $f =1$ is the constant function.
\end{nt}

\bigskip

Now for any subgroup $E \subset K$ such that
\begin{itemize}
\item for any $n \in \dz$  the subgroup $$(E \cap \oo(n))/ (E \cap \oo(n+1))   \subset \oo(n)/\oo(n+1)$$
is open compact,
\item  and also  $$E = \mathop{\lim\limits_{\lrto}}\limits_{n} \mathop{\lim\limits_{\longleftarrow}}\limits_{m \ge n } ( E \cap \oo(n))/( E \cap \oo(m))$$
\end{itemize}
we define a characteristic function $\delta_E  \in \D_{\alpha}(K)$.

For any $\alpha_2 \ge \alpha_1 \in  \breve{\Gamma}_K$
we define an element
$$c_{\alpha_2, \alpha_1} \in \mu(\alpha_2, \alpha_1) = \mu(\oo'(\alpha_1) / \oo'(\alpha_2))$$
 by the following rule: the measure given by $c$
of the open compact group
$${(\oo'(\alpha_1) \cap E) /(\oo'(\alpha_2) \cap E) }$$ is equal to $1$.
For any $\alpha_2 \le \alpha_1 \in  \breve{\Gamma}_K$ we define $c_{\alpha_2, \alpha_1} = c_{\alpha_1, \alpha_2}^{-1} \in \mu(\alpha_2, \alpha_1)$.

For any $\alpha_2 \ge \alpha_1 \in  \breve{\Gamma}_K$ let ${\bf 1}_{\alpha_1, \alpha_2}$ be a function from $\D(\oo'(\alpha_1)/ \oo'(\alpha_2))$
which is equal to $1$ on the subgroup ${(\oo'(\alpha_1) \cap E) /(\oo'(\alpha_2) \cap E) }$ and is equal to $0$ on the complement to this subgroup in the group $\oo'(\alpha_1)/ \oo'(\alpha_2)$.

Now to define $\delta_E$ we put in formula~\eqref{D} elements
$$
{\bf 1}_{\alpha_1, \alpha_2} \otimes c_{\alpha_2, \alpha}  \, \mbox{.}
$$
for any $\alpha_1 \ge \alpha_2$. This is well-defined.

We note that the element $\delta_E$ can be also obtained as the direct image of some element, see more in~\cite[\S~5.9.2]{OsipPar1}.

\subsection{Invariants and coinvariants: definitions} \label{inv-sec}
We keep all assumptions from sections~\ref{Spaces} and~\ref{charcteristic}.

We note that the spaces $\D_{\alpha}(K)$ and $\D'_{\alpha}(K)$ can be rewritten as:
\begin{equation}  \label{D-new}
\D_{\alpha}(K)= \mathop{\lim_{\longleftarrow}}_{n \in \dz } \, \mathop{\lim_{\longleftarrow}}_{m \ge n } \D(\oo(n) / \oo(m)) \otimes_{\dc} \mu(m, \alpha) \, \mbox{,}
\end{equation}
\begin{equation} \label{D'-new}
\D'_{\alpha}(K)= \mathop{\lim_{\lrto}}_{n \in \dz} \, \mathop{\lim_{\lrto}}_{m \ge n} \D'(\oo(n) / \oo(m)) \otimes_{\dc} \mu(\alpha, m) \, \mbox{,}
\end{equation}
where an integer $m \in \dz$ is also considered as an element  $(m , - \infty) \in \breve{\Gamma}_K$.

The group $\oo_K^*$ acts on $K$ by multiplication. Hence it acts on the spaces $\D(\oo(n) / \oo(m))$, $\D'(\oo(n) / \oo(m))$ and $\mu(n,m)$
in the standard way. Therefore after passing to limits we obtain at once an action of $\oo_K^*$ on the $\dc$-vector spaces $\D_{\alpha}(K)$
and $\D'_{\alpha}(K)$ for $\alpha \in \dz$. (Moreover, using isomorphisms~\eqref{change}, it is easy to construct the action of  $\oo_K^*$  on these spaces for any $\alpha \in  \breve{\Gamma}_K$.)

Besides, we obtain that the group $(\oo'_K)^*$ acts on the $\dc$-vector spaces $\D_{\alpha}(K)$
and $\D'_{\alpha}(K)$ for any $\alpha \in  \breve{\Gamma}_K$.

We note that $(\oo_K')^*$ acts trivially on the one-dimensional $\dc$-vector space $\mu(\alpha_1, \alpha_2)$ for any $\alpha_1, \alpha_2 \in
\breve{\Gamma}_K$. Indeed, from formula~\eqref{measures}
it is enough to see it only in the cases:  $\mu(n, n+1)$, where $n \in \dz$, and $\mu(\alpha_1, \alpha_2)$, where
$\breve{\pi}(\alpha_1) = \breve{\pi}(\alpha_2)$.

We have that the transition maps in limits in formulas~\eqref{D-new}-\eqref{D'-new} are $(\oo_K')^*$-equivariant. Besides, these maps are surjective
in formula~\eqref{D-new} and injective in formula~\eqref{D'-new}. This gives us the following proposition on the spaces of invariants and coinvariants.

\begin{prop}  \label{prop1}
For any $\alpha \in \breve{\Gamma}_K$ we have
$$
\D'_{\alpha}(K)^{(\oo_K')^*} = \mathop{\lim_{\lrto}}_{n \in \dz} \, \mathop{\lim_{\lrto}}_{m \ge n} \D'(\oo(n) / \oo(m))^{(\oo_K')^*} \otimes_{\dc} \mu(\alpha, m)  \, \mbox{,}
$$
\begin{equation}  \label{inv-form}
\D_{\alpha}(K)_{(\oo_K')^*}= \mathop{\lim_{\longleftarrow}}_{n \in \dz } \, \mathop{\lim_{\longleftarrow}}_{m \ge n } \D(\oo(n) / \oo(m))_{(\oo_K')^*} \otimes_{\dc} \mu(m, \alpha)  \, \mbox{.}
\end{equation}
\end{prop}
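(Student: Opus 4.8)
The plan is to deduce both equalities by commuting the functor of $(\oo_K')^*$-invariants (for $\D'_\alpha(K)$) and of $(\oo_K')^*$-coinvariants (for $\D_\alpha(K)$) past the limits in~\eqref{D'-new} and~\eqref{D-new}, using the three properties recorded just above the statement: the transition maps are $(\oo_K')^*$-equivariant, they are injective in~\eqref{D'-new} and surjective in~\eqref{D-new}, and $(\oo_K')^*$ acts trivially on each one-dimensional space $\mu(\alpha_1,\alpha_2)$. The triviality on the $\mu$-factors lets me move $\otimes_\dc\mu(\alpha,m)$ (resp. $\otimes_\dc\mu(m,\alpha)$) outside the functor, since for a $\dc[(\oo_K')^*]$-module $V$ and a one-dimensional trivial module $W$ one has $(V\otimes_\dc W)^{(\oo_K')^*}=V^{(\oo_K')^*}\otimes_\dc W$ and $(V\otimes_\dc W)_{(\oo_K')^*}=V_{(\oo_K')^*}\otimes_\dc W$. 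Thus it suffices to interchange invariants with $\varinjlim_n\varinjlim_m$ on the system $\D'(\oo(n)/\oo(m))$ and coinvariants with $\varprojlim_n\varprojlim_m$ on the system $\D(\oo(n)/\oo(m))$.

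The invariants case is the straightforward one. Because the transition maps in~\eqref{D'-new} are injective and equivariant, the double inductive limit is realized as a filtered union of $(\oo_K')^*$-submodules, each identified with some $\D'(\oo(n)/\oo(m))\otimes_\dc\mu(\alpha,m)$. Invariance is a finite-stage condition: a fixed vector of the union already lies in one of these submodules, and by injectivity of the equivariant embedding it is fixed there too. Hence the invariants of the limit are the filtered union of the invariants of the stages, which is precisely the right-hand side; this proves the first formula.

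For coinvariants the exactness runs the wrong way and this is where the content sits. I would set $V_{n,m}=\D(\oo(n)/\oo(m))$, write $(V_{n,m})_{(\oo_K')^*}=V_{n,m}/I_{n,m}$ with $I_{n,m}$ the augmentation subspace spanned by the vectors $g\cdot w-w$, and observe that a surjective equivariant transition map $V_{n,m'}\to V_{n,m}$ restricts to a surjection $I_{n,m'}\to I_{n,m}$ (lift each $w$ occurring in a generator) and hence descends to a surjection on coinvariants. Thus $\{I_{n,m}\}$ is a surjective inverse system, it satisfies the Mittag-Leffler condition, $\varprojlim^1 I_{n,m}=0$, and applying $\varprojlim$ to $0\to I_{n,m}\to V_{n,m}\to (V_{n,m})_{(\oo_K')^*}\to 0$ produces an exact sequence $0\to\varprojlim I_{n,m}\to\D_\alpha(K)\to\varprojlim (V_{n,m})_{(\oo_K')^*}\to 0$ once the $\mu$-factors are reinstated. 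The main obstacle is then the final identification: to conclude I must show that the augmentation subspace of $\D_\alpha(K)$ equals $\varprojlim I_{n,m}$, equivalently that the canonical surjection $\D_\alpha(K)_{(\oo_K')^*}\to\varprojlim(V_{n,m})_{(\oo_K')^*}$ is injective, and this is not purely formal. I expect the cleanest route to be duality: the pairing~\eqref{pairing-D} is non-degenerate and $(\oo_K')^*$-invariant (the action on the $\mu$-twist being trivial), and at each finite stage it identifies $\D(\oo(n)/\oo(m))_{(\oo_K')^*}$ with the dual of $\D'(\oo(n)/\oo(m))^{(\oo_K')^*}$ via the standard isomorphism $(V_{(\oo_K')^*})^*\cong(V^*)^{(\oo_K')^*}$; dualizing the invariants formula proved above, under which $\varinjlim$ becomes $\varprojlim$, then yields the coinvariants formula and forces the needed injectivity.
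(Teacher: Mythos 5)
Your treatment of the first formula and your Mittag--Leffler setup for the second coincide with the paper's proof: the authors likewise reduce the double limits to the sequential system indexed by $m>0$ with $n=-m$, observe that the augmentation subspaces $V_{-m,m}\subset \D(\oo(-m)/\oo(m))\otimes_{\dc}\mu(m,\alpha)$ form a surjective inverse system, and pass to the projective limit of the exact triples. The divergence is at the very last step, and there your argument has a genuine gap. The identification you single out --- that the kernel $\varprojlim V_{-m,m}$ of the surjection $\D_{\alpha}(K)\to\varprojlim\,\D(\oo(-m)/\oo(m))_{(\oo_K')^*}\otimes_{\dc}\mu(m,\alpha)$ is exactly the augmentation subspace of $\D_{\alpha}(K)$ --- cannot be obtained by the duality you propose, for two reasons. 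First, the finite-stage duality goes only one way: Lemma~\ref{lem1} gives $\D'(\oo(n)/\oo(m))^{(\oo_K')^*}\simeq\bigl(\D(\oo(n)/\oo(m))_{(\oo_K')^*}\bigr)^{*}$, but the coinvariant space $\D(\oo(n)/\oo(m))_{(\oo_K')^*}$ is infinite-dimensional (it surjects onto $\D(\oo(n)/\oo(n+1))_{(\oo_K')^*}\simeq\D_+(T(n))$), hence it is \emph{not} the full dual of $\D'(\oo(n)/\oo(m))^{(\oo_K')^*}$; dualizing the already-proved invariants formula therefore produces $\varprojlim\bigl(\D(\cdot)_{(\oo_K')^*}\bigr)^{**}$ rather than $\varprojlim\,\D(\cdot)_{(\oo_K')^*}$, and in any case $\D_{\alpha}(K)_{(\oo_K')^*}$ is paired with, but is not the full dual of, $\D'_{\alpha}(K)^{(\oo_K')^*}$. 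Second, the non-degeneracy of pairing~\eqref{dual_inv_two} is deduced in the paper \emph{from} Proposition~\ref{prop1}, and non-degeneracy in the $\D_{\alpha}(K)_{(\oo_K')^*}$-variable is in fact equivalent to the injectivity you are trying to establish: an element of $\varprojlim V_{-m,m}$ is annihilated by every functional coming from a finite level, i.e.\ by all of $\D'_{\alpha}(K)^{(\oo_K')^*}$ once the first formula is known. So invoking that duality here is circular.

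For the record, the paper's own proof stops exactly where your ``main obstacle'' begins: it takes the projective limit of the exact triples~\eqref{pr} and reads off formula~\eqref{inv-form} directly, identifying the resulting quotient of $\D_{\alpha}(K)$ with the coinvariants without further comment. Up to that point your proposal reproduces the intended argument; what you add --- the attempted closure by duality --- is the part that does not work, and it would have to be replaced by a direct verification that $\varprojlim V_{-m,m}$ is spanned by the elements $g\cdot w-w$ with $w\in\D_{\alpha}(K)$ and $g\in(\oo_K')^*$.
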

{ \begin{proof}
We have to use the facts which are given before this proposition. We add only that for the proof of formula~\eqref{inv-form} we consider an exact sequence for any integers $m \ge n$:
\begin{equation} \label{pr}
0 \lrto V_{n,m}  \lrto  \D(\oo(n) / \oo(m))  \otimes_{\dc} \mu(m, \alpha) \lrto  \D(\oo(n) / \oo(m))_{(\oo_K')^*}  \otimes_{\dc} \mu(m, \alpha) \lrto 0 \, \mbox{.}
\end{equation}
Now the double projective limits from the right hand sides of formulas~\eqref{D-new}   and~\eqref{inv-form} can be rewritten as ordinary projective limits over the set of indices  $m >0$, and where $n =-m$.
We note that the natural maps  $V_{-k,k} \lrto V_{-m,m}$ are surjective when $k >m$. Therefore the Mittag-Leffler condition for the projective system $V_{-m,m}$ is satisfied. Hence, after taking the projective limit of exact triples~\eqref{pr} over $m >0$ and where $n=-m$,  we obtain formula~\eqref{inv-form}.
\end{proof}}

\begin{nt} \em
Clearly, elements $\delta_{\gamma, b}$ and, more generally, $\delta_{\gamma,\lambda, f,b }$ when $f$ is an $(\oo_K')^*$-invariant function (see section~\ref{charcteristic})
belong to the space $\D_{\alpha}'(K)^{(\oo_K')^*}$.
\end{nt}

\begin{nt}  \label{notation} \em
We presented in section~\ref{Funct_distr} some facts from~\cite{OsipPar1} and~\cite{OsipPar1'} with some simplifications in application to a two-dimensional local field $K$.
We note that in this papers elements of harmonic analysis were constructed in much more generality: on objects of the category $C_2$
or $C_2^{\rm ar}$. (The first field from~\eqref{examples} is an object of $C_2$ and of $C_2^{\rm ar}$ and the second and third fields from~\eqref{examples} are objects of category~$C_2^{\rm ar}$.) Objects of these categories are filtered groups with additional conditions, see~\cite[\S~5]{OsipPar1} and~\cite[\S~5]{OsipPar1'}.
In the case of field $K$ this filtration can be given, for example,  by $\oo'_K$-modules $\oo'(\alpha)$, where $\alpha \in \breve{\Gamma}_K$,
or by $\oo_K$-modules $\oo(n)$, where $n \in \dz$.  The constructions from~\cite{OsipPar1} and~\cite{OsipPar1'} used inductive  and projective limits of some spaces depending on these filtrations. Besides, we used in these papers the spaces of virtual measures which depend on elements of the filtration. In our case, the spaces of virtual measures   are the spaces $\mu(\alpha_1, \alpha_2)$ from section~\ref{Spaces}, and they are equal either to the space $\mu(\oo'(\alpha_1)/ \oo'(\alpha_2))$ or to the space $\mu(\oo'(\alpha_2)/ \oo'(\alpha_1))^*$
from section~\ref{Spaces}.
\end{nt}

\subsection{Invariants and coinvariants: duality}  \label{inv-dual}
We can say more on spaces of invariants and coinvariants.

Let $L$ be a one-dimensional local field with finite  residue field and $K$ be a two-dimensional local field with a partial topology. Let $\alpha \in \breve{\Gamma}_K$.

From~\eqref{dual_1} and~\eqref{pairing-D} we have  canonical pairings:
$$
\D(L)_{\oo_L^*}  \times \D'(L)^{\oo_L^*}  \lrto \dc  \, \mbox{,}
$$
\begin{equation}  \label{dual_inv_two}
\D_{\alpha}(K)_{(\oo_K')^*}  \times \D'_{\alpha}(K)^{(\oo_K')^*} \lrto \dc  \, \mbox{.}
\end{equation}

We recall the following easy lemma.

\begin{lemma}  \label{lem1}
Let $G$ ba a group, which acts on a vector space $V$ over a field $E$. Then there is a canonical isomorphism:
$$
 \Hom\nolimits_E(V_G, E)  \simeq \Hom\nolimits_E (V, E)^G\, \mbox{.}
$$
\end{lemma}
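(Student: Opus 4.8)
The plan is to unwind the definitions and invoke the universal property of the quotient. Write $V_G = V/W$, where $W \subset V$ is the $E$-subspace generated by all elements of the form $gv - v$ with $g \in G$ and $v \in V$, and let $p : V \lrto V_G$ be the canonical projection. Recall that $G$ acts on $\Hom_E(V, E)$ by the contragredient action $(g \cdot \phi)(v) = \phi(g^{-1} v)$, so that $\Hom_E(V, E)^G$ consists precisely of those $\phi$ satisfying $\phi(g^{-1} v) = \phi(v)$ for all $g \in G$ and $v \in V$.

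First I would construct the map from left to right: given $\psi \in \Hom_E(V_G, E)$, send it to $\psi \circ p \in \Hom_E(V, E)$. Since $p(gv) = p(v)$ in $V_G$ (because $gv - v \in W$), the functional $\psi \circ p$ satisfies $(\psi \circ p)(gv) = (\psi \circ p)(v)$ for all $g, v$, hence lies in $\Hom_E(V, E)^G$. This assignment is $E$-linear, and it is injective because $p$ is surjective.

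The key step is surjectivity, which is exactly the universal property of the quotient. A functional $\phi \in \Hom_E(V, E)$ annihilates $W$ if and only if $\phi(gv - v) = 0$ on all generators, i.e. $\phi(gv) = \phi(v)$ for all $g, v$; letting $g$ range over all of $G$ and replacing $g$ by $g^{-1}$, this is verbatim the invariance condition $\phi(g^{-1} v) = \phi(v)$. Thus every $G$-invariant $\phi$ vanishes on $W$ and therefore factors uniquely as $\phi = \psi \circ p$ for a unique $\psi \in \Hom_E(V_G, E)$, which furnishes the required preimage. Hence the map is a linear bijection defined without any choices (only composition with the canonical projection), so it is the desired canonical isomorphism. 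I do not expect any genuine obstacle here: the entire content is the observation that ``factors through the coinvariants'' and ``is $G$-invariant for the contragredient action'' are one and the same condition. The only point requiring care is fixing the convention for the $G$-action on the dual space (the inverse appearing in $g^{-1}$), after which the two descriptions coincide literally.
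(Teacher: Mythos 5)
Your proof is correct and complete; the paper itself states this lemma without proof (``We recall the following easy lemma''), and your argument --- identifying $G$-invariance of a functional with vanishing on the subspace spanned by the elements $gv-v$ and then applying the universal property of the quotient $V_G=V/W$ --- is exactly the standard argument the authors are implicitly relying on.
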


From this lemma we obtain at once that

\begin{equation}  \label{dual_inv_one}
\Hom\nolimits_{\dc} ( \D(L)_{\oo_L^*}, \dc) \simeq  \D'(L)^{\oo_{L}^*}   \, \mbox{.}
\end{equation}

\begin{prop}
The pairing~\eqref{dual_inv_two}  is non-degenerate.
\end{prop}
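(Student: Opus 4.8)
The plan is to reduce the assertion to the finite ``levels'' $\oo(n)/\oo(m)$ occurring in Proposition~\ref{prop1} and to apply Lemma~\ref{lem1} there. First I would record that~\eqref{dual_inv_two} is induced by the non-degenerate pairing~\eqref{pairing-D}: for $w \in \D'_{\alpha}(K)^{(\oo_K')^*}$ and $a \in (\oo_K')^*$ one has $\langle a\cdot v - v, w\rangle = \langle v, a^{-1}\cdot w - w\rangle = 0$, so the functional $\langle\,\cdot\,,w\rangle$ factors through the coinvariants $\D_{\alpha}(K)_{(\oo_K')^*}$. Thus~\eqref{dual_inv_two} is well defined, and proving its non-degeneracy amounts to showing that its left and right kernels vanish.

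Next I would work at a fixed level. Applying Lemma~\ref{lem1} to $V = \D(\oo(n)/\oo(m))$ with $G = (\oo_K')^*$, together with~\eqref{dual_1}, gives a canonical isomorphism
$$
\Hom_{\dc}(\D(\oo(n)/\oo(m))_{(\oo_K')^*}, \dc) \simeq \D'(\oo(n)/\oo(m))^{(\oo_K')^*} \mbox{.}
$$
Since $(\oo_K')^*$ acts trivially on the one-dimensional spaces $\mu(m,\alpha)$ and $\mu(\alpha,m) = \mu(m,\alpha)^*$, tensoring preserves this identification. Writing $A_{n,m} = \D(\oo(n)/\oo(m))_{(\oo_K')^*} \otimes_{\dc} \mu(m,\alpha)$, I conclude that $\D'(\oo(n)/\oo(m))^{(\oo_K')^*} \otimes_{\dc} \mu(\alpha,m)$ is canonically the full dual $\Hom_{\dc}(A_{n,m},\dc)$, and hence that the level evaluation pairing is non-degenerate: any element of $A_{n,m}$ annihilated by the whole dual is zero.

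Finally I would pass to the limits of Proposition~\ref{prop1}: by~\eqref{inv-form} the space $\D_{\alpha}(K)_{(\oo_K')^*}$ is the projective limit of the $A_{n,m}$, while $\D'_{\alpha}(K)^{(\oo_K')^*}$ is the inductive limit of the duals $\Hom_{\dc}(A_{n,m},\dc)$, and~\eqref{dual_inv_two} sends $(v,w)$ to $\langle v_{n,m}, w_{n,m}\rangle$ evaluated at any level where $w$ is represented, $v_{n,m}$ denoting the projection of $v$. For the left kernel, if $v$ pairs to $0$ with everything, then for each level and each functional $w_{n,m}$ one gets $\langle v_{n,m}, w_{n,m}\rangle = 0$, so $v_{n,m}=0$ by the full-dual property, whence $v=0$. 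For the right kernel, the transition maps in~\eqref{D-new} are surjective, hence so are those in~\eqref{inv-form} (coinvariants being a right-exact quotient); the Mittag-Leffler condition then makes each projection of the projective limit onto a level $A_{n_0,m_0}$ surjective, so if $w$ is represented by $w_{n_0,m_0}$ and pairs to $0$ with all $v$, then $\langle x, w_{n_0,m_0}\rangle = 0$ for all $x \in A_{n_0,m_0}$, giving $w_{n_0,m_0}=0$ and $w=0$.

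The step I expect to require the most care is the last one: verifying that~\eqref{dual_inv_two} is genuinely computed level-wise and coincides with the evaluation pairings of the middle step, and securing the surjectivity of the projective transition maps (Mittag-Leffler), which is precisely what lets the projective limit surject onto each finite level and makes the right-kernel argument go through.
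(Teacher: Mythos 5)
Your proof is correct and follows essentially the same route as the paper's: level-wise perfectness of the pairing via Lemma~\ref{lem1}, combined with surjectivity of the transition maps in the projective system of coinvariants and injectivity in the inductive system of invariants from Proposition~\ref{prop1}. The paper states this more tersely, but the ingredients and their roles (full-dual identification at each level, surjection onto each level for the right kernel, projection argument for the left kernel) coincide with yours.
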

\begin{proof}
From Lemma~\ref{lem1} we have that for any integer $m \ge n$ the following pairing is non-degenerate:
$$
\D(\oo(n)/\oo(m))_{(\oo'_K)^*}  \times    \D'(\oo(n)/\oo(m))^{(\oo'_K)^*}  \lrto 0  \, \mbox{.}
$$
Besides, the transition maps in formulas for $\D'_{\alpha}(K)^{(\oo_K')^*}$ and $\D_{\alpha}(K)_{(\oo_K')^*}$ in Proposition~\ref{prop1} are injective and surjective correspondingly, since the functor of taking invariants is left exact and the functor of taking of coinvariants is right exact. Hence we obtain that
the pairing~\eqref{dual_inv_two} is non-degenerate.
\end{proof}

\begin{nt}{\em
We note that $\D_{\alpha}(K)^{(\oo_K')^*}=0 $, see~\cite[Prop.~4]{OsipPar2}.
}
\end{nt}

\section{Function and distribution spaces on free Abelian groups of ranks $1$ and $2$}  \label{dis}

\subsection{Rank~$1$}
In this section we describe function and distribution spaces on a
$\dz$-torsor $T$
and connect these spaces with invariant and coinvariant spaces of function and distribution spaces on a one-dimensional  vector space  $N$
over a one-dimensional local field with a finite residue field when the $\dz$-torsor $T$ naturally appears from the space  $N$.

\subsubsection{The space $\D_+(T)$}  \label{prop-int}

Let $T$ be a $\dz$-torsor. Then $T$ is naturally linearly ordered. We define a $\dc$-vector space $\D_+(T)$ as the space of functions from $T$ to $\dc$ with  conditions:
\begin{multline*}
\D_+(T) = \{c \, :  \,   x \longmapsto c_x \, \mbox{,} \quad x \in T \, \mbox{,} \quad c_x \in \dc   \quad \mbox{such that} \quad
c_y =0 \quad  \mbox{when} \quad   y \le x_0 \quad  \mbox{and} \\ \quad  c_x = c_{x+1} \quad  \mbox{when} \quad  x \ge x_1 \quad  \mbox{(the elements $x_0$ and $x_1$ depend on $c$)} \}  \, \mbox{.}
\end{multline*}

For any $x \in T$ we consider $\delta_{\ge x} \in \D_+(T)$:
$$
\delta_{\ge x} (y) = 0 \quad \mbox{if} \quad y < x \, \mbox{, and} \quad \delta_{\ge x} (y) =1 \quad \mbox{if} \quad y \ge x \, \mbox{.}
$$
The set of elements $\delta_{\ge x}$ (where $x \in T$) is a basis in the $\dc$-vector space  $\D_+(T)$.

\begin{nt} \em
For $T = \dz$  the space $\D_+(\dz)$ was used in~\cite{Par3}.
\end{nt}

 \medskip

For the applications it is important the following proposition.
\begin{prop} \label{prop_dual}
Let $L$ be a one-dimensional local field with  finite  residue field $\df_q$. Let $\mu $ be a (non-zero) Haar measure on $L$ and $u$ be a local parameter in $L$. Then the following diagram is commutative:
$$
\xymatrix{
\D(L)^{\oo_L^*} \,  \ar@{^{(}->}[r]
\ar[d]^(.45){i} & \,
\D(L)
\ar@{->>}[r]^(.45)v &
\D(L)_{\oo_L^*}
\ar[d]_(.45){j} \\
\D_+(\dz)
\ar@{=}[rr]  &&
\, \D_+(\dz)  \; \mbox{,}
}
$$
where the maps in upper row are natural embedding and surjection correspondingly,
the map $i$ is defined as
$$
\D(L)^{\oo_L^*}  \ni f  \longmapsto i(f) \in \D_+(\dz) \, \mbox{,} \quad  i(f) \, : \,   n \longmapsto f(u^n)   \, \mbox{,}
$$
and the map $j$ is defined as
$$
\D(L)_{\oo_L^*}  \ni g  \longmapsto j(g) \in \D_+(\dz)
\, \mbox{,} \quad  j(g) \, : \,   n \longmapsto \frac{1}{\mu(O_L^*(n))} \, \int_{\oo^*_L(n)} g(x) \, d\mu(x)    \, \mbox{,}
$$
where $\oo^*_L(n) = m_L^n \setminus m_L^{n-1}$, $m_L$ is the maximal ideal in the discrete valuation ring $\oo_L$ (cf.  section~\ref{can-dec}).

Besides, we have that the maps $i$ and $j$ do not depend on the choice of $u$ and $\mu$ correspondingly,
and  the maps $i$ and $j$ are isomorphisms.
\end{prop}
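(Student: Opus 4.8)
The plan is to treat the invariants and the coinvariants separately and then tie them together through the commutativity of the diagram. Throughout I use the disjoint decomposition $L = (0) \sqcup \coprod_{n \in \dz} \oo_L^*(n)$ from section~\ref{can-dec}, noting that each $\oo_L^*(n)$ (the set of elements of valuation $n$) is a single $\oo_L^*$-orbit, i.e. a principal homogeneous space under $\oo_L^*$, and that $u^n \in \oo_L^*(n)$. First I would show that $i$ is an isomorphism by a direct description of invariant functions: a function $f \in \D(L)$ is $\oo_L^*$-invariant precisely when it is constant on each orbit $\oo_L^*(n)$, so it is determined by the sequence $n \mapsto f(u^n)$ together with $f(0)$. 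Compact support forces $f(u^n) = 0$ for $n \ll 0$, while local constancy at $0$ forces $f$ to equal $f(0)$ on some $m_L^M$, hence $f(u^n) = f(0)$ for $n \gg 0$; these are exactly the two defining conditions of $\D_+(\dz)$. Conversely every $c \in \D_+(\dz)$ is realized by the invariant function equal to $c_n$ on $\oo_L^*(n)$ and to the eventual value of $c$ at $0$, which lies in $\D(L)^{\oo_L^*}$. Thus $i$ is bijective, and it is independent of $u$ because for another parameter $u' = ua$ with $a \in \oo_L^*$ one has $f(u'^n) = f(u^n a^n) = f(u^n)$ by invariance.

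Next I would check the two properties of $j$. The map is well defined on coinvariants because $\mu$ is invariant under multiplication by any $a \in \oo_L^*$ (as $|a|=1$) and each orbit $\oo_L^*(n)$ is stable under this multiplication; hence the orbit integrals of $g$ and of $a \cdot g$ coincide, so $j$ kills the subspace spanned by the elements $g - a\cdot g$. That $j(g)$ lands in $\D_+(\dz)$ follows from the same two features of $g$ used above: compact support gives $j(g)(n) = 0$ for $n \ll 0$, and constancy of $g$ near $0$ gives $j(g)(n) = g(0)$ for $n \gg 0$. Rescaling $\mu$ by a positive constant rescales numerator and denominator equally, so $j$ does not depend on $\mu$. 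Commutativity of the diagram is then immediate: for an invariant $f$ the averaging in $j$ is trivial since $f$ is already constant, equal to $f(u^n)$, on $\oo_L^*(n)$, so $j([f])(n) = f(u^n) = i(f)(n)$.

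The remaining and main point is that $j$ is an isomorphism, and here I would exploit that $\oo_L^*$ is a compact (profinite) group acting smoothly on $\D(L)$: every $f \in \D(L)$ is fixed by an open subgroup $1 + m_L^k$. Over $\dc$ the normalized Haar average $P(g) = \int_{\oo_L^*} a\cdot g \, da$ is therefore a finite average, defines a projector of $\D(L)$ onto $\D(L)^{\oo_L^*}$, and has kernel exactly the augmentation subspace $W = \langle g - a\cdot g \rangle$ (on the finite quotient through which the action factors, this is the usual finite-group averaging identity). Consequently the natural composite $c \colon \D(L)^{\oo_L^*} \hookrightarrow \D(L) \twoheadrightarrow \D(L)_{\oo_L^*}$ is an isomorphism. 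Combining this with the commutativity relation $j \circ c = i$ and the fact that $i$ is an isomorphism, I conclude that $j = i \circ c^{-1}$ is an isomorphism as well. The delicate step is precisely the identification $\ker P = W$, i.e. that every coinvariant class has an invariant representative; this is where compactness of $\oo_L^*$ and characteristic zero are essential, and it is the crux of the argument. Alternatively one may avoid averaging and prove $j$ injective by hand, computing $W$ orbit by orbit through the coinvariants $\D(\oo_L^*(n))_{\oo_L^*} \cong \dc$ of the regular representation, but the compact-averaging route is cleaner.
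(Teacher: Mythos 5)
Your proposal is correct, but it resolves the crux of the statement --- the bijectivity of $j$ --- by a different mechanism than the paper. The paper proves surjectivity of both $i$ and $j$ by exhibiting the images of the characteristic functions $\delta_{m_L^n}$ (namely $\delta_{\ge n}$, which form a basis of $\D_+(\dz)$), and then proves injectivity of $j$ by a hands-on computation: given $f$ with $j\circ v(f)=0$, it reduces to a function $\tilde f$ on a finite quotient $m_L^k/m_L^m$, decomposes that quotient into $\oo_L^*$-orbits $O_s$, observes that the hypothesis forces $\sum_{x\in O_s}\tilde f(x)=0$ on each orbit, and then writes $\tilde f$ explicitly as a finite sum of elements $\delta_x - r_{x,y_s}(\delta_x)$ of the augmentation subspace. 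This is exactly the ``orbit by orbit'' alternative you mention in your last sentence. Your main route instead invokes the general principle that for a smooth action of the compact group $\oo_L^*$ on a $\dc$-vector space, the normalized Haar average is a projector onto the invariants with kernel the augmentation subspace, so that $\D(L)^{\oo_L^*}\to\D(L)_{\oo_L^*}$ is an isomorphism; combined with the commutativity of the diagram and the bijectivity of $i$ this gives $j=i\circ c^{-1}$. The averaging argument is sound (each $f\in\D(L)$ is indeed fixed by some $1+m_L^k$, so the orbit of $f$ is finite and the integral is a finite sum), and it is more conceptual and generalizes immediately to any smooth representation of a profinite group in characteristic zero; the paper's argument is more elementary and self-contained, producing the augmentation-subspace decomposition explicitly without appealing to Haar measure on $\oo_L^*$. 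Your descriptions of $i$, of the well-definedness of $j$ on coinvariants, and of the commutativity of the diagram agree in substance with the paper's.
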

\begin{nt} \em
We note that
$\mu(\oo^*_L(n)) = \mu(\oo_L)(q^{-n} - q^{-n-1})$
for any $n \in \dz$.
\end{nt}
{ \begin{proof}
For any $n \in \dz$ let $\delta_{m_L^n} \in \D(L)^{\oo_L^*} $ be the characteristic function of the fractional ideal $m_L^n$ of $L$.
We have $i(\delta_{m_L^n})= \delta_{\ge n}$. Therefore the map $i$ is surjective. Clearly, this map is also injective.

We note that the map $j$ is well-defined, since for any $r \in \oo_L^*$ we have
$$
\int_{\oo^*_L(n)} g(x) \, d\mu(x) = \int_{\oo^*_L(n)} r(g)(x) \, d r(\mu) (x) = \int_{\oo^*_L(n)} r(g)(x) \, d\mu(x) \, \mbox{.}
$$
Besides, we have $j \circ v (\delta_{m_L^n})= \delta_{\ge n}$. Therefore the map $j$ is surjective.

Now we prove that the map $j$ is injective. Let $f \in \D(L)$ such that $j \circ v (f)=0$. We will prove that
$$f = \sum_{1 \le l \le s} r_l (f_l) - f_l   \, \mbox{,} \qquad \mbox{where} \quad  s \in \dn \ \mbox{,} \quad  \mbox{and} \quad
r_l \in \oo_L^*  \, \mbox{,} \quad  f_l \in \D(L) \quad   \mbox{for} \quad  1 \le l \le s \, \mbox{.} $$
 There are numbers
$k \le m \in \dz$ and the function $\tilde{f} : m_L^k / m_L^m  \lrto \dc$ such that
${f(x)= \tilde{f}(\bar{x})}$
for  $x \in m_L^k$, where $\bar{x}$ is the image of $x$ in $m_L^k/ m_L^m$,
 and $f(x)= 0$
for  $x \in L \setminus m_L^k$. It is easy to see that it is enough to prove that
$$
\tilde{f} = \sum_{1 \le l \le s} r_l (\tilde{f}_l) - \tilde{f}_l   \, \mbox{,} \; \: \mbox{where} \; \:  s \in \dn \ \mbox{,} \quad  \mbox{and} \quad
r_l \in \oo_L^*  \, \mbox{,} \quad  \tilde{f}_l : m_L^k/ m_L^m \lrto \dc \quad   \mbox{for} \quad  1 \le l \le s \, \mbox{.}
$$

The finite group $ m_L^k / m_L^m$ is the disjoint union of orbits
of the group $\oo_L^*$:
$$ m_L^k  / m_L^n = \coprod_{1 \le s \le m-k+1} O_s \, \mbox{.}
$$

For every $s$  from condition $j \circ v (f)=0$ we have
\begin{equation}  \label{sum}
\sum_{x \in O_s} \tilde{f}(x) =0  \, \mbox{.}
\end{equation}
For every $s$ we fix $y_s \in O_s$. From~\eqref{sum} we obtain
$$
\tilde{f} =  \sum_{1 \le s \le m-k+1}  \left( \sum_{x \in O_s} \tilde{f}(x) \left( \delta_{x} - r_{x,y_s}(\delta_{x})  \right) \right)  \, \mbox{,}
$$
where for any $x \in O_s$ an element $r_{x,y_s} \in \oo_L^*$ satisfies  $r_{x,y_s} x =  y_s$, and the function $\delta_{x} : m_L^k / m_L^m  \lrto \dc$ is equal to $1$ at $x$ and is equal to $0$ otherwise.

Therefore the map $j$ is injective. This all together gives the proof of the proposition.
\end{proof}}

\subsubsection{The space $\D_+'(T)$}

Let, as in section~\ref{prop-int}, $T$ be a $\dz$-torsor.

We define a $\dc$-vector space
$$
\D_+'(T)  = \Hom\nolimits_{\dc} (\D_+(T), \dc)  \, \mbox{.}
$$

\begin{nt} \label{isom} \em
From isomorphism~\eqref{dual_inv_one} and Proposition~\ref{prop_dual} we obtain a canonical isomorphism:
$$
\D'(L)^{\oo_L^*} \simeq \D_+'(\dz)  \, \mbox{.}
$$
\end{nt}

\medskip

We describe now the space $\D_+'(T)$  more explicitly.

We note that
\begin{equation}  \label{ind_limit}
\D_+(T) = \mathop{\lim_{\lrto}}_{x  \in T} \D(T_x)  \, \mbox{,}
\end{equation}
where $T_x = \{ y \in T : y \le x \}$, and $\D(T_x)$
 is the space of $\dc$-valued functions on $T$ with the finite support. And if $x_1 \le x_2$, then the transition map $T_{x_1}  \hookrightarrow T_{x_2}$
 in the inductive limit~\eqref{ind_limit}
is given as
$$
\D(T_{x_1}) \ni f  \longmapsto  g \in  \D(T_{x_2})  \; : \;
\left \{
\begin{array}{l}
g(y) = f(y) \quad \mbox{for} \quad y \le x_1 \, \mbox{,}\\
g(y)= f(x_1) \quad \mbox{for} \quad x_2 \ge y \ge x_1
\, \mbox{.}
\end{array}
\right.
$$

Therefore we have
\begin{equation}  \label{pro}
\D'_+(T) = \mathop{\lim_{\longleftarrow}}_{x  \in T} \Hom\nolimits_{\dc} (\D(T_x), \dc)  \, \mbox{.}
\end{equation}
The space $\Hom_{\dc} (\D(T_x) , \dc)$ is naturally identified with the space of all $\dc$-valued functions on $T_x$. In other words,
any element from  $\Hom\nolimits_{\dc} (\D(T_x), \dc)$ is a collection of elements
$$
a_{y,x} \in \dc \, \mbox{,} \quad \mbox{where} \quad T_x \ni y \le x  \, \mbox{,}
$$
and the pairing with elements from $\D(T_x)$ is the sum over $y \le x \in T$ of pairwise products of elements corresponding to $y$. Then
the transition maps in projective limit~\eqref{pro} are the following:
\begin{equation}  \label{form_pro}
a_{y,x} = a_{y, x+1}  \quad \mbox{if} \quad y < x \, \mbox{,}
\quad \mbox{and} \quad
a_{x,x}= a_{x,x+1}  + a_{x+1, x+1} \, \mbox{.}
\end{equation}

We note that elements $a_{x,x}$ (where $x \in T$) uniquely define an element from $\D'_+(T)$. Indeed, for fixed $x \in T$ we can obtain from~\eqref{form_pro}:
$$
a_{x,x+1} = a_{x,x} - a_{x+1, x+1} \, \mbox{,} \quad
a_{x,x+2} = a_{x,x+1} \, \mbox{,}
\quad
a_{x, x+3}= a_{x,x+2} \ldots
$$
Thus, we can identify
$$
\D'_+(T) \simeq \left\{ \{a_{x,x} \}_{x \in T} \right\}  \, \mbox{.}
$$

{ We recall that the set of elements $\delta_{\ge x}$ (where $x \in T$) is a basis in the $\dc$-vector space $\D_+(T)$, see section~\ref{prop-int}.} A number $a_{x,x} \in \dc$ is the value of the pairing of the  corresponding element from $\D_+'(T)$ with the element $\delta_{\ge x}$.

{ We note that for a torsor $J$ over  an Abelian locally compact group $H$ the $\dc$-vector spaces $\D(J)$, $\D'(J)$ and $\mu(J)$ are well-defined. }

\begin{prop} \label{delta}
Let $L$ be a one-dimensional local field with  finite residue field~$\df_q$. Let $N$ be a one-dimensional vector space over $L$.
Let $$T = (N \setminus 0) \times^{L^*} (L^*/ \oo_L^*)   $$  be a $\dz$-torsor. Let
${\delta_{(0)} \in \D'(N)}$ be defined as $\delta_{(0)}(f)= f(0)$, where $f \in \D(N)$, and let
 $\mu \in \D'(N)$ be an invariant $\dc$-valued
measure on $N$.
Then under an isomorphism (which is given as  in Remark~\ref{isom})
\begin{equation}  \label{iso-T}
\Lambda \; : \;\D'(N)^{\oo_L^*} \simeq \D_+'(T)
\end{equation}
the element $\delta_{(0)}$ goes to the element
$$\delta_{+\infty} = \Lambda(\delta_{(0)}) \quad  : \quad a_{x,x}=1 \quad  \mbox{for any} \quad  x \in T \quad \mbox{and} \quad  a_{y,x}=0
 \quad
 \mbox{if}  \quad y < x \,\mbox{,} $$
the element $\mu$ goes to the element
$$
a_{x,x}= \mu(\oo_x) \quad  \mbox{for any} \quad  x \in T \quad  \mbox{and}  \quad a_{y,x} = (1 - q^{-1}) \mu(\oo_y) \, \mbox{,}
$$
where $\oo_x = b \, \oo_L $ with $\nu(b) =x$ for the canonical morphism $\nu: (N \setminus 0) \to T$.
\end{prop}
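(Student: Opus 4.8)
The plan is to recognize the isomorphism $\Lambda$ as the transpose of the coinvariant isomorphism of Proposition~\ref{prop_dual}. Indeed, by Remark~\ref{isom} (via~\eqref{dual_inv_one} and Lemma~\ref{lem1}) the isomorphism $\Lambda : \D'(N)^{\oo_L^*}\simeq\D'_+(T)$ is dual to the isomorphism $j : \D(N)_{\oo_L^*}\simeq\D_+(T)$ from Proposition~\ref{prop_dual}, applied to $N$ and its torsor $T$ instead of $L$. Thus for $\phi\in\D'(N)^{\oo_L^*}$ the diagonal coefficient of $\Lambda(\phi)$ is $a_{x,x}=\langle\Lambda(\phi),\delta_{\ge x}\rangle=\langle\phi,f\rangle$ for any $f\in\D(N)$ with $j(v(f))=\delta_{\ge x}$, where $v:\D(N)\to\D(N)_{\oo_L^*}$ is the projection and the last equality uses the $\oo_L^*$-invariance of $\phi$. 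Since, by the discussion following~\eqref{form_pro}, the family $\{a_{x,x}\}_{x\in T}$ determines the element of $\D'_+(T)$ uniquely, it suffices to compute these diagonal coefficients and then recover the remaining $a_{y,x}$ from~\eqref{form_pro}.

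The key step is to exhibit a convenient preimage. For $x\in T$ let $\delta_{\oo_x}\in\D(N)$ be the characteristic function of the $\oo_L$-lattice $\oo_x=b\,\oo_L$, $\nu(b)=x$; it is $\oo_L^*$-invariant. Averaging it over the shell $\nu^{-1}(y)$ gives $1$ if $y\ge x$, since then $\nu^{-1}(y)\subset\oo_x$, and $0$ if $y<x$, since then $\nu^{-1}(y)\cap\oo_x=\varnothing$; hence $j(v(\delta_{\oo_x}))=\delta_{\ge x}$, in exact analogy with the identity $j\circ v(\delta_{m_L^n})=\delta_{\ge n}$ established in the proof of Proposition~\ref{prop_dual}. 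Therefore $a_{x,x}=\langle\phi,\delta_{\oo_x}\rangle$. For $\phi=\delta_{(0)}$ this is $\delta_{\oo_x}(0)=1$ (as $0\in\oo_x$), and for $\phi=\mu$ it is $\int_N\delta_{\oo_x}\,d\mu=\mu(\oo_x)$.

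Finally I would read off the off-diagonal coefficients from~\eqref{form_pro}: for $y<x$ one has $a_{y,x}=a_{y,y+1}=a_{y,y}-a_{y+1,y+1}$. In the case $\phi=\delta_{(0)}$ this equals $1-1=0$, which is precisely the element $\delta_{+\infty}$. In the case $\phi=\mu$ it equals $\mu(\oo_y)-\mu(\oo_{y+1})$; here $\oo_{y+1}\subset\oo_y$ is a sublattice of index $q$, so invariance of $\mu$ gives $\mu(\oo_{y+1})=q^{-1}\mu(\oo_y)$ and hence $a_{y,x}=(1-q^{-1})\mu(\oo_y)$, as asserted. The only genuinely delicate point is the bookkeeping of $\Lambda$ together with the orientation of the order on $T$, so that the characteristic function of $\oo_x$ corresponds to $\delta_{\ge x}$ and not to its complement; once this matching is fixed (it agrees with $\delta_{m_L^n}\mapsto\delta_{\ge n}$ from Proposition~\ref{prop_dual}), the index identity $\mu(\oo_{y+1})=q^{-1}\mu(\oo_y)$ supplies the factor $1-q^{-1}$ and the rest is routine.
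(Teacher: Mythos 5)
Your proposal is correct and follows essentially the same route as the paper: the authors likewise determine $\Lambda(\delta_{(0)})$ and $\Lambda(\mu)$ by pairing against the basis elements $\delta_{\ge x}$ (for the $a_{x,x}$) and against $\delta_{\ge y}-\delta_{\ge(y+1)}$ (for the $a_{y,x}$ with $y<x$), which is exactly your computation via the preimage $\delta_{\oo_x}$ and the recursion $a_{y,x}=a_{y,y}-a_{y+1,y+1}$. Your write-up merely makes explicit the details the paper leaves to the reader (the identification of $\Lambda$ as the transpose of $j$, the identity $j(v(\delta_{\oo_x}))=\delta_{\ge x}$, and the index computation $\mu(\oo_{y+1})=q^{-1}\mu(\oo_y)$), all of which are correct.
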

\begin{proof}
The isomorphism~\eqref{iso-T} is defined as in Remark~\ref{isom}: it is enough to fix an isomorphism $N \simeq L$.
To obtain the presentations for elements $\delta_{+\infty} = \Lambda( \delta_{(0)})$ and $\Lambda(\mu)$
it is enough to calculate their pairings with elements $\delta_{\ge x}$ (for $a_{x,x}$)
and with elements $\delta_{\ge y} - \delta_{\ge (y +1)}$ if $y < x$ (for~$a_{y,x}$).
\end{proof}

\begin{nt} \label{deform} \em
For any $r \in \dc^*$ we define an element $g_{r} \in  \D'_+(\dz) $ as
$$
a_{n,n} = r^{-n} \, \mbox{,} \quad  a_{m,n}= r^{-m}(1 -  r^{-1}) \quad \mbox{if} \quad m < n \, \mbox{.}
$$
Then under isomorphism $\D'_+(\dz) \simeq D'(L)^{\oo_L^*}$ we have
$g_{r}(\delta_{\oo_L})=1$  and  $u^*(g_{r})= r g_{r}$, where $\delta_{\oo_L} \in \D(L)$ is the characteristic function of $\oo_L$ and $u$ is a local parameter of $L$ which naturally acts on $\D'(L)$ (this action is induced by multiplication on the field $L$). The elements $\delta_{+ \infty} = \Lambda(\delta_{(0)})$ and $\Lambda(\mu)$ (with the property $\mu(\delta_{\oo_L})=1$) from proposition~\ref{delta}
are obtained when $r= 1$ and $r =q$ correspondingly.

From this point of view we obtain that $\delta_{(0)}$ is the deformation of $\mu$ through $r \in \dc^*$.
\end{nt}

\medskip

For any $\dz$-torsor $T$ let a $\dc$-vector space $\D(T)$ be the space of $\dc$-valued functions on $T$ with the finite support.
We define
$$
\D'(T) = \Hom\nolimits_{\dc}(\D(T), \dc)  \, \mbox{.}
$$

We have a natural  exact sequence:
$$
0 \lrto \D(T)  \lrto \D_+(T)  \stackrel{w}{\lrto} \dc \lrto 0 \, \mbox{,}
$$
where $w(f) = \delta_{+\infty}(f)$ for $f \in \D(T)$.

We consider a dual exact sequence:
$$
0 \lrto \dc \stackrel{w'}{\lrto} \D_+'(T)  \stackrel{s}{\lrto} \D'(T) \lrto 0  \, \mbox{,}
$$
where $w'(s)= s \cdot \delta_{+\infty}$
 for $s \in \dc$.
 The space $\D'(T)$ is naturally identified with  the space of all $\dc$-valued  functions on $T$.
 The map $s : \D_+'(T)  {\twoheadrightarrow} \D'(T) $ is described as:
 $$
 \{ a_{x,x}\}  \longmapsto \{ x \mapsto a_{x, x+1}  \} \, \mbox{.}
 $$
 In other words, the value of the function $s(\{a_{x,x} \})$ at $x \in T$ is equal to the number $a_{x,x+1} = a_{x,x} -a _{x+1, x+1}$, which is also equal to the value of the pairing of $\{a_{x,x} \}$ with the element $\delta_{\ge x} - \delta_{\ge(x+1)} $.

\subsection{Rank~$2$}
In this section we will  construct function and distribution spaces on a free Abelian group $\Gamma$ of rank $2$ such that if $\Gamma = \Gamma_{K}$, where  $K$ is a two-dimensional local field with a partial topology, then the corresponding function and distributions spaces are connected with  the coinvariant and invariant spaces: $\D_{\alpha}(K)_{(\oo'_K)^*}$ and $\D'_{\alpha}(K)^{(\oo'_K)^*}$ for $\alpha \in \breve{\Gamma}_K$.

\subsubsection{More on spaces $\D(\oo(n) /\oo(m))_{(\oo'_K)^*}$  and $\D'(\oo(n) /\oo(m))^{(\oo'_K)^*}$}  \label{reas}

First, we  give  more statements on the spaces
$$ \D(\oo(n) /\oo(m))_{(\oo'_K)^*} \qquad   \mbox{and} \qquad  \D'(\oo(n) /\oo(m))^{(\oo'_K)^*} \, \mbox{,}$$
where $n \le m \in \dz$.

Let $K$  be a  two-dimensional local field with a partial topology.

We consider  $n \le m \in \dz$. We define a map $\Phi$:
\begin{multline} \label{Phi}
\D(\oo(n) /\oo(m))_{(\oo'_K)^*} \otimes_{\dc} \mu(\oo(n)/ \oo(m))  \stackrel{\Phi}{ \lrto}  \\
\prod_{n \le k < m} \D(\oo(k)/ \oo(k+1))_{(\oo_K')^*}  \otimes_{\dc} \mu(\oo(n)/ \oo(k+1))
 \stackrel{\phi}{\simeq} \\
 \prod_{n \le k < m} \D_+(T(k))  \otimes_{\dc} \mu(\oo(n)/\oo(k+1))  \, \mbox{.}
\end{multline}
Here $T(k)$ is a $\dz$-torsor canonically constructed by $\oo(k)/ \oo(k+1)$  (see Proposition~\ref{delta}),  the isomorphism $\phi$
is induced by the product of canonical isomorphisms ${\D(\oo(k)/ \oo(k+1))_{(\oo'_K)^*} \simeq \D_+(T(k))}$ (see Proposition~\eqref{prop_dual}). The map $\Phi$ is
$$
\Phi = \prod_{n \le k < m } \tau_k  \, \mbox{,}
$$
where the map $\tau_k$ is the composition of the map
\begin{equation}  \label{map-1}
\D(\oo(n)/ \oo(m))_{(\oo'_K)^*}  \otimes_{\dc} \mu(\oo(n)/ \oo(m)) \lrto \D(\oo(k)/ \oo(m))_{(\oo'_K)^*}   \otimes_{\dc}  \mu(\oo(n)/ \oo(m))
\end{equation}
which is induced by the inverse image with the tensor product on the identity map  on  the space of measures, and the map
$$
 \D(\oo(k)/ \oo(m))_{(\oo'_K)^*}   \otimes_{\dc}  \mu(\oo(n)/ \oo(m))  \lrto \D(\oo(k)/ \oo(k+1))_{(\oo'_K)^*}   \otimes_{\dc}  \mu(\oo(n)/ \oo(k+1))
$$
which is induced by the direct image with the tensor product on the identity map  on  the space of measures.

\begin{nt} \em
We don't know the description of $\Ker \Phi$.
\end{nt}

Now we describe $\Image \Phi$. We define a {$\dc$-vector} subspace
$$
W \subset \prod_{n \le k < m} \D_+(T(k))  \otimes_{\dc} \mu(\oo(n)/\oo(k+1))
$$
in the following way. We consider elements
$$
f_k \otimes \mu_k  \subset \D_+(T(k))  \otimes_{\dc} \mu(\oo(n)/\oo(k+1)) \, \mbox{,} \quad  \mbox{where} \quad \mu_k \ne 0 \quad \mbox{and}
\quad
n \le k < m  \,\mbox{.}
$$
(The condition $\mu_k \ne 0$ is not a restriction, since $f \otimes 0 = 0 \otimes \mu$, where $\mu \ne 0$.)
Then an element
$$
\prod_{n \le k < m } f_k \otimes \mu_k
$$
belongs to $W$ if and only if
\begin{gather}
\delta_{+\infty} (f_k) = \eta_{k+1}(f_{k+1})  \, \mbox{,}   \label{cond}\\ \nonumber
\mbox{where} \quad \mu_{k+1} = \mu_k \otimes \eta_{k+1} \quad  \mbox{for any} \quad n \le k < m-1 \\  \nonumber
\mbox{and} \quad \eta_{k+1} \in \mu(\oo(k+1)/ \oo(k+2)) \subset \D'(\oo(k+1)/\oo(k+2))^{(\oo_{K}')^*} \simeq
\D_+'(T(k+1)) \, \mbox{.}
\end{gather}

\begin{prop}  \label{W}
We have
$$
\Image \left( \phi \circ \Phi \right) = W  \, \mbox{.}
$$
\end{prop}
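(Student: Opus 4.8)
The plan is to prove the two inclusions $\Image(\phi\circ\Phi)\subseteq W$ and $W\subseteq\Image(\phi\circ\Phi)$ separately. Since $\mu(\oo(n)/\oo(m))$ is one-dimensional, every element of the source is a pure tensor $g=h\otimes\mu$ with $h$ a class in $\D(\oo(n)/\oo(m))_{(\oo'_K)^*}$ and $\mu\in\mu(\oo(n)/\oo(m))$ nonzero, so I may work with such $g$. For each $k$ I use the canonical factorization $\mu=\mu_k\otimes\nu^{(k)}$ with $\mu_k\in\mu(\oo(n)/\oo(k+1))$ and $\nu^{(k)}\in\mu(\oo(k+1)/\oo(m))$ coming from~\eqref{measures}. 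Unwinding the definition of $\tau_k$, the element $\tau_k(g)=f_k\otimes\mu_k$ has $f_k\in\D(\oo(k)/\oo(k+1))_{(\oo'_K)^*}$ equal to the integral of $h|_{\oo(k)/\oo(m)}$ along the fibres of $p_k:\oo(k)/\oo(m)\to\oo(k)/\oo(k+1)$ against $\nu^{(k)}$, and I identify $f_k$ with its image in $\D_+(T(k))$ under $\phi$ (via Proposition~\ref{prop_dual}).

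For the inclusion $\subseteq$, note first that since $\mu$ is fixed and $\mu(\oo(n)/\oo(k+2))\cong\mu(\oo(n)/\oo(k+1))\otimes\mu(\oo(k+1)/\oo(k+2))$, the two factorizations of $\mu$ force $\mu_{k+1}=\mu_k\otimes\eta_{k+1}$ for a unique $\eta_{k+1}\in\mu(\oo(k+1)/\oo(k+2))$ and, correspondingly, $\nu^{(k)}=\eta_{k+1}\otimes\nu^{(k+1)}$; thus the measure part of~\eqref{cond} is automatic and only $\delta_{+\infty}(f_k)=\eta_{k+1}(f_{k+1})$ remains. The key observation is that under $\D(\oo(k)/\oo(k+1))_{(\oo'_K)^*}\simeq\D_+(T(k))$ the functional $\delta_{+\infty}$ computes the value of the class $f_k$ at $0\in\oo(k)/\oo(k+1)$ (the eventual value of the averaged radial profile equals $f_k(0)$, which is $(\oo'_K)^*$-invariant, hence well defined on coinvariants). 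Since the fibre of $p_k$ over $0$ is exactly $\oo(k+1)/\oo(m)$, this gives $\delta_{+\infty}(f_k)=\int_{\oo(k+1)/\oo(m)}h\,d\nu^{(k)}$. On the other hand $\eta_{k+1}(f_{k+1})$ is the integral of $f_{k+1}$ against the invariant measure $\eta_{k+1}$, and Fubini together with $\nu^{(k)}=\eta_{k+1}\otimes\nu^{(k+1)}$ rewrites it as the same integral $\int_{\oo(k+1)/\oo(m)}h\,d\nu^{(k)}$. Hence $\phi\circ\Phi(g)\in W$.

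For the reverse inclusion I would argue by induction on $m-n$, peeling off the bottom layer $k=n$; the case $m=n+1$ is immediate since then $\Phi$ is, up to the measure identification, the identity and $W$ is the whole space. Given $\prod_{n\le k<m}f_k\otimes\mu_k\in W$, observe that for $k\ge n+1$ one has $\tau_k(h)=\tau_k'(h|_{\oo(n+1)/\oo(m)})$, where $\tau_k'$ is the analogous map for $\oo(n+1)/\oo(m)$, because $\oo(k)/\oo(m)\subseteq\oo(n+1)/\oo(m)$. The subfamily $\{f_k\otimes\mu_k\}_{k\ge n+1}$ satisfies exactly the defining relations of the corresponding space $W'$ for $\oo(n+1)/\oo(m)$, so by the induction hypothesis there is $h'\in\D(\oo(n+1)/\oo(m))_{(\oo'_K)^*}$ realizing all upper layers. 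I then extend $h'$ to some $H\in\D(\oo(n)/\oo(m))_{(\oo'_K)^*}$ with $H|_{\oo(n+1)/\oo(m)}=h'$ (restriction to the closed subgroup $\oo(n+1)/\oo(m)$ is surjective on the spaces $\D$) and correct it by a term $\Delta$ supported over $\oo(n)/\oo(n+1)\setminus\{0\}$ so that the bottom pushforward $p_{n,*}(H+\Delta)$ equals $f_n$, where $p_n:\oo(n)/\oo(m)\to\oo(n)/\oo(n+1)$. The point is that the value at $0$ of this pushforward is forced to be $\int_{\oo(n+1)/\oo(m)}h'\,d\nu^{(n)}=\eta_{n+1}(f_{n+1})$, which by $\prod f_k\otimes\mu_k\in W$ equals $\delta_{+\infty}(f_n)=f_n(0)$; hence the required correction $f_n-p_{n,*}H$ vanishes at $0$ and can be realized by a $\Delta$ whose support avoids the central fibre, leaving the upper layers untouched.

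The routine parts ($\subseteq$ is bookkeeping plus a Fubini computation, and the base of the induction is trivial) are not the difficulty. The main obstacle is the surjectivity statement $W\subseteq\Image(\phi\circ\Phi)$: one must carry out the extension-and-correction step honestly in the totally disconnected locally compact setting, using surjectivity of restriction to closed subgroups and of fibrewise integration $p_{n,*}$ onto $\D(\oo(n)/\oo(n+1))$, and track everything through the coinvariant quotients and the one-dimensional measure spaces. Conceptually, the heart of the matter is the recognition that the single scalar relation~\eqref{cond} at the bottom layer is \emph{precisely} the obstruction to extending the already-constructed upper-layer function, so that it vanishes exactly when the family lies in $W$.
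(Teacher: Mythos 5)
Your proof is correct and follows essentially the same route as the paper's: the inclusion $\Image(\phi\circ\Phi)\subset W$ via the compatibility of inverse and direct images across consecutive layers (which you make explicit as an evaluation-at-$0$ plus Fubini computation, where the paper factors $\tau_k\times\tau_{k+1}$ through an intermediate map to $\D(\oo(k)/\oo(k+2))_{(\oo'_K)^*}$), and the reverse inclusion by induction on $m-n$ using surjectivity of the transition map plus a correction term supported off the central fibre. Your extension-and-correction step is precisely the content of the paper's Lemma~\ref{lem-int}, whose hypothesis $f_2(0)=0$ you correctly identify as being supplied by the defining relation~\eqref{cond} of $W$.
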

{ \begin{proof}
To prove that $\Image \left( \phi \circ \Phi \right) \subset W$ it is enough to consider for any $k$, where ${n \le k \le m-2}$, the map
$$
\alpha_{k}  :
\D(\oo(n) /\oo(m))_{(\oo'_K)^*} \otimes_{\dc} \mu(\oo(n)/ \oo(m))  \lrto
 \D(\oo(k)/ \oo(k+2))_{(\oo_K')^*}  \otimes_{\dc} \mu(\oo(n)/ \oo(k+2))
$$
which is the composition of the map~\eqref{map-1}
 and the map
$$
 \D(\oo(k)/ \oo(m))_{(\oo'_K)^*}   \otimes_{\dc}  \mu(\oo(n)/ \oo(m))  \lrto \D(\oo(k)/ \oo(k+2))_{(\oo'_K)^*}   \otimes_{\dc}  \mu(\oo(n)/ \oo(k+2))
$$
 induced by the direct image with the tensor product on the identity map  on  the space of measures. Then the map $\tau_k \times \tau_{k+1}$
 factors over  the map $\alpha_{k}$. Hence we obtain conditions~\eqref{cond}.

 Now we prove that any element from $W$ is in the image of  $\phi \circ \Phi$.
 We prove it by induction on $m -n$. If $m-n =1$, then it is nothing to prove. We suppose that we have already proved it for
 $n_0=n$ and $m_0= m-1$ (or for $n_0 = n+1$ and $m_0 =m$). We note that the natural map
$$ \D(\oo(n) /\oo(m))_{(\oo'_K)^*} \otimes_{\dc} \mu(\oo(n_0)/ \oo(m))  \lrto
\D(\oo(n_0) /\oo(m_0))_{(\oo'_K)^*} \otimes_{\dc} \mu(\oo(n_0)/ \oo(m_0))
 $$
 is surjective.  Now we have to take the tensor product of this map with  the identity map on the  $1$-dimensional $\dc$-vector space $\mu(\oo(n) / \oo(n_0))$
 and add, if necessary,  a ``correction''  element constructed by means of the following lemma.

 \begin{lemma} \label{lem-int}
 We consider an exact sequence for any $v < s < t$:
 $$
 0 \lrto \oo(s)/ \oo(v)  \stackrel{\alpha}{\lrto} \oo(t)/ \oo(v) \stackrel{\beta}{\lrto} \oo(t)/ \oo(s)  \lrto 0 \, \mbox{.}
 $$
Let $\mu$ be a (non-zero) Haar measure on $\oo(s)/ \oo(v)$. Let $f_1 \in \D(\oo(s)/ \oo(v))$ such that $\int_{\oo(s)/ \oo(v)} f_1(x) d \mu(x) =0$,
and $f_2 \in \D(\oo(t)/ \oo(s))$ such that $f_2(0)=0$. Then there are $g_1$ and $g_2$ from $\D(\oo(t)/ \oo(v))$ such that
$\alpha^* (g_1) = f_1$, $\beta_*(g_1 \otimes \mu) =0$, and $\beta_*(g_2 \otimes \mu)= f_2 $, $\alpha^*(g_2)=0$. Here $\alpha^*$ is the inverse image and $\beta_*$ is the direct image.
\end{lemma}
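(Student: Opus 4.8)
The plan is to forget the specific index conventions and prove the statement at the level of the abstract short exact sequence of locally compact totally disconnected Abelian groups
$$
0 \lrto A \stackrel{\alpha}{\lrto} B \stackrel{\beta}{\lrto} C \lrto 0 \, \mbox{,}
$$
where $A = \oo(s)/\oo(v)$ is the closed subgroup, $B = \oo(t)/\oo(v)$, and $C = \oo(t)/\oo(s) = B/A$ is the quotient; here $\beta$ is an open map, $\alpha^*$ is restriction of functions to $A$, and $\beta_*(\;\cdot\;\otimes\mu)$ is fibrewise integration against $\mu$, each fibre of $\beta$ being a coset of $A$ and hence an $A$-torsor carrying the measure induced by $\mu$. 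The two hypotheses are exactly the necessary conditions: since $\alpha^*(g_1)=f_1$, the fibre of $\beta$ over $0$ contributes $\int_A f_1\, d\mu$ to $\beta_*(g_1\otimes\mu)(0)$, forcing $\int_A f_1\,d\mu = 0$; and since $\alpha^*(g_2)=0$, that same fibre contributes $0$, forcing $f_2(0)=0$. It remains to prove sufficiency.

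First I would build $g_2$. Since $f_2$ is locally constant with compact support and $f_2(0)=0$, I may choose a compact open subgroup $\tilde U \subset B$ small enough that $U := \beta(\tilde U)$ (a compact open subgroup of $C$, as $\beta$ is open) satisfies $f_2|_U = 0$ and $f_2$ is constant on cosets of $U$; thus $f_2 = \sum_i c_i\,{\bf 1}_{c_i + U}$ with all $c_i \notin U$. Fixing a lift $b_i \in \beta^{-1}(c_i)$ for each $i$, a direct computation on fibres gives $\beta_*({\bf 1}_{b_i + \tilde U}\otimes\mu) = \mu(A \cap \tilde U)\,{\bf 1}_{c_i + U}$, while $(b_i + \tilde U)\cap A = \emptyset$ because $c_i \notin U$; hence ${\bf 1}_{b_i+\tilde U}|_A = 0$. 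Since $A \cap \tilde U$ is compact open in $A$ we have $0 < \mu(A\cap\tilde U) < \infty$, so setting $g_2 = \sum_i \frac{c_i}{\mu(A\cap\tilde U)}\,{\bf 1}_{b_i+\tilde U} \in \D(B)$ yields $\alpha^*(g_2)=0$ and $\beta_*(g_2\otimes\mu)=f_2$.

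For $g_1$ I would argue in two steps. The point is that $A$ is closed but in general not open in $B$, so extending $f_1$ by zero is not locally constant and cannot be used directly. Instead I first extend $f_1$ to an arbitrary $\tilde g \in \D(B)$ with $\alpha^*(\tilde g)=f_1$: writing $f_1 = \sum_j d_j\,{\bf 1}_{a_j + V}$ for a compact open subgroup $V \subset A$, I replace $V$ by a compact open subset $\tilde V \subset B$ with $\tilde V \cap A = V$ (such a thickening exists because $A$ carries the induced topology), so that $\tilde g = \sum_j d_j\,{\bf 1}_{a_j + \tilde V}$ restricts to $f_1$ on $A$. Then $\psi := \beta_*(\tilde g \otimes \mu) \in \D(C)$ satisfies $\psi(0) = \int_A f_1\,d\mu = 0$, so applying the construction of the previous paragraph to $\psi$ in place of $f_2$ produces $g' \in \D(B)$ with $\alpha^*(g')=0$ and $\beta_*(g'\otimes\mu)=\psi$. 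The element $g_1 := \tilde g - g'$ then has $\alpha^*(g_1) = f_1$ and $\beta_*(g_1\otimes\mu) = \psi - \psi = 0$, as required.

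The routine verifications left are the computation of $\beta_*$ on characteristic functions of compact open cosets, the existence of the thickening $\tilde V$, and the fact that restriction, push-forward and extension all preserve local constancy and compact support (with push-forward well defined because each fibre meets the compact support of the integrand in a compact set). The genuine conceptual obstacle—and the reason the argument is not a one-line extension-by-zero—is precisely the non-openness of $A$ in $B$; this is what forces the extend-then-correct strategy for $g_1$ and makes the reuse of the $g_2$-construction essential.
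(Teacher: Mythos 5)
Your argument is correct. Note, though, that the paper does not really prove this lemma: it only offers the parenthetical hint that one should regard locally constant compactly supported functions as pulled back from finite Abelian groups (as in the second half of the proof of Proposition~\ref{prop_dual}) and then verify the statement there by elementary linear algebra. Your proof is essentially the explicit, self-contained version of that reduction: working with characteristic functions of cosets of compact open subgroups is the same thing as working with functions factoring through finite quotients, and your fibre computation $\beta_*({\bf 1}_{b_i+\tilde U}\otimes\mu)=\mu(A\cap\tilde U)\,{\bf 1}_{c_i+U}$ together with the disjointness $(b_i+\tilde U)\cap A=\emptyset$ for $c_i\notin U$ is exactly the combinatorial core one would extract on a finite group. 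The one genuinely organizational idea you add --- first producing an arbitrary extension $\tilde g$ of $f_1$ via a compact open thickening $\tilde V$ of $V$ (correctly justified, since $A$ is closed but not open in $B$, so extension by zero fails), then subtracting a correction $g'$ built by the $g_2$-construction applied to $\psi=\beta_*(\tilde g\otimes\mu)$ --- is sound and is what makes the two halves of the lemma reinforce each other. The only blemish is notational: you use $c_i$ both for the coset representative in $C$ and for the value $f_2(c_i)$; write $f_2=\sum_i f_2(c_i)\,{\bf 1}_{c_i+U}$ to avoid the clash. No mathematical gap.
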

 (It is  easy to prove this lemma if we consider locally constant functions with compact supports as functions which come from functions on finite Abelian groups as in the second part of the proof  of Proposition~\ref{prop_dual}, or as in~\cite[\S~4.2]{OsipPar1}.)
\end{proof}}

\medskip

We have also ``the dual picture'':
\begin{equation}  \label{Phi'}
  W^* = \Hom\nolimits_{\dc}(W,\dc)  \; \subset \;  \D'(\oo(n) /\oo(m))^{(\oo'_K)^*} \otimes_{\dc} \mu(\oo(n)/ \oo(m))^* \, \mbox{.}
\end{equation}
Besides, we have
$$
0 \lrto V  \lrto   \bigoplus_{n \le k < m} \D'_+(T(k))  \otimes_{\dc} \mu(\oo(n)/\oo(k+1))^*   \lrto  W^*  \lrto 0  \, \mbox{,}
$$
where $ \D'_+(T(k))  \simeq \D'(\oo(k)/ \oo(k+1))^{(\oo_K')^*} $ and the subspace $V$ is generated by elements:
$$
h_k \otimes \mu_k - z_{k+1} \otimes \lambda_{k+1}  \, \mbox{,}
\qquad
n \le k < m \, \mbox{,}  $$
where  $ h_l  \mbox{,} \; z_l \in \D'_+(T(l))$ and $ \mu_l \mbox{,} \; \lambda_l \in \mu(\oo(n)/\oo(l+1))^*
$,
with the following conditions
$$
h_k = \delta_{+ \infty}  \, \mbox{,} \qquad \mu_k = \eta_{k+1} \otimes \lambda_{k+1}  \, \mbox{,}
$$
$$
 \eta_{k+1} \in \mu(\oo(k+1)/ \oo(k+2))  \subset \D'(\oo(k+1)/ \oo(k+2))^{(\oo'_K)^*} \simeq \D'_+(T(k+1)) \, \mbox{,} \quad z_{k+1} = \eta_{k+1} \, \mbox{.}
$$

\subsubsection{Free Abelian groups of rank~$2$}  \label{discr_2}
Now we will use reasonings from section~\ref{reas} to define function and distribution spaces on free Abelian groups of rank~$2$.

Let $\Gamma$ be a group such that there is a central extension (which we fix):
\begin{equation} \label{centr}
0 \lrto \dz \lrto \Gamma \stackrel{\pi}{\lrto} \dz \lrto 0  \, \mbox{.}
\end{equation}
We note that there is always a section of the homomorphism $\pi$ and, therefore, there is a decomposition $\Gamma \simeq \dz \oplus \dz$,
but we don't fix this section. We fix only  $\pi$.

We consider a naturally linearly ordered set:
$$
\breve{\Gamma} = \Gamma \cup \pi(\Gamma) \, \mbox{,}
$$
where elements of $\breve{\Gamma}$ are pairs
$$
(n, p)  \in \Gamma \quad \mbox{or} \quad (n, -\infty) \, \mbox{,} \quad \mbox{where} \quad n \in \pi(\Gamma) \, \mbox{,} \quad p \in \pi^{-1}(n)  \, \mbox{.}
$$
We have a natural map $\breve{\pi} : \breve{\Gamma} \to \dz$ which extends the map $\pi$ (the map $\breve{\pi}$ restricted to  $\pi(\Gamma)$ is the identity map). For any elements $\alpha_1$ and $\alpha_2$ from $\breve{\Gamma}$
we have
$$
\alpha_1 < \alpha_2 \quad \mbox{if} \quad \breve{\pi}(\alpha_1) < \breve{\pi}(\alpha_2) \, \mbox{,} \quad \mbox{and} \quad
(n, -\infty ) < (n, p)  \, \mbox{,} \quad (n, p_1) < (n, p_2) \quad \mbox{if}  \quad p_1 < p_2 \, \mbox{.}
$$

\bigskip

For any two subsets $A$ and $B$ of a set $R$ we say that
$$
A \sim B   \qquad \mbox{iff} \qquad \mid A \setminus (A \cap B) \mid < \infty  \quad \mbox{and} \quad \mid B \setminus (A \cap B) \mid < \infty \, \mbox{.}
$$
These two conditions are equivalent to the condition
 $$
 \mid (A \cup B)  \setminus (A \cap B)  \mid < \infty  \, \mbox{.}
 $$

Let $(R, \tau)$ be a pair, where $R$ is any set and $\tau$ is a family of subsets in the set $R$ such that
\begin{itemize}
\item if $A_1$ and $A_2$ belongs to $\tau$, then $A_1 \sim A_2$,
\item and  if for $A \in \tau$ and for a subset $B$ of $R$ we have $A \sim B$, then $B$ belongs to $\tau$.
\end{itemize}

\begin{defin}
For $(R, \tau)$ as above,
a $\dz$-torsor $[R]_{\tau}$ consists of all maps $d  :  \tau \lrto \dz$ such that if ${A_1  \mbox{,} \, A_2 \in \tau}$ and ${A_1 \subset A_2 }$, then
$$
d(A_2)= d(A_1) + \mid A_2 \setminus A_1 \mid \, \mbox{.}
$$
For any $a \in \dz$ and $d \in [R]_{\tau}$ the action is:
$$
(a+d )(A)= d(A) +a  \, \mbox{,}
$$
where $A \in \tau$.
\end{defin}

We note that
if $R = \emptyset$, then $[R]= [R]_{\tau}= \dz$.

We have an evident property. We suppose that  $(R, \tau)$ is obtained from $(R_1, \tau_1)$ and $(R_2,\tau_2)$ in the following way: $R = R_1 \sqcup R_2$,
and  $ A \in \tau$ if and only if $A= A_1 \sqcup A_2$, where $A_1 \in \tau_1$ and $A_2 \in \tau_2$.  Then there is a canonical isomorphism of
$\dz$-torsors:
$$
[R_1]_{\tau_1}  \otimes_{\dz} [R_2]_{\tau_2}  \lrto [R]_{\tau}
$$
which is given as: $(d_1 \otimes d_2) (A_1 \sqcup A_2) = d_1(A_1) + d_2(A_2)$.

\begin{example}  \em
Let $T$ be a $\dz$-torsor. We consider an element $x \in  T$. We define $\tau$ as a family of all subsets  $A \subset T$ such that
$A \sim (\dz_{\ge 0} + x) $. Then $\tau$ does not depend on the choice of $x \in T$. We obtain a canonical $\dz$-torsor $[T] = [T]_{\tau}$.
There is a canonical isomorphism of $\dz$-torsors:
\begin{gather*}
T \lrto [T] \, \mbox{,} \qquad T \ni y \longmapsto d_y \in [T] \, \mbox{,}
\\
\mbox{where} \; \; d_y(P)= \mid P \setminus (P \cap (\dz_{\ge 0} + y)) \mid
- \mid (\dz_{\ge 0} + y) \setminus       (P \cap (\dz_{\ge 0} + y))          \mid  \; \; \mbox{for any} \; \: P \in \tau \mbox{.}
\end{gather*}
\end{example}

\bigskip

Let $\Gamma$ be a group as in the beginning of this section~\ref{discr_2} (with the fixed central extension~\eqref{centr}).
For any elements $\alpha$ and $\beta$ from $\breve{\Gamma}$
we construct canonically a $\dz$-torsor $[\alpha, \beta]$.
If $\alpha \ge \beta$, then we consider a set
$$
R_{\alpha, \beta} = \{ \gamma \in \Gamma \; : \; \alpha > \gamma \ge \beta  \}
$$
Let $Y \subset R_{\alpha, \beta}$ be a set-theoretic section of $\pi$ over the set $\pi(R_{\alpha, \beta})$.
We recall that for any $n \in \pi(\Gamma)$
the subset $\pi^{-1}(n) \subset \Gamma$ is a $\dz$-torsor. Therefore we can define  a subset ${(\dz_{\ge 0} + Y) \subset \Gamma}$.  Now $\tau$ consists of all subsets $ A \subset R_{\alpha, \beta}$
such that $A \sim (\dz_{\ge 0} + Y) \cap R_{\alpha, \beta}$.
We note that $\tau$ does not depend on the choice of $Y$.
Then we define
$$
[\alpha, \beta] = [R_{\alpha, \beta}] = [R_{\alpha, \beta}]_{\tau}  \, \mbox{.}
$$
If $\alpha \le \beta$, then we define
$$
[\alpha, \beta] = [\beta, \alpha]^*  \, \mbox{.}
$$
(We recall that for a $\dz$-torsor $P$, the $\dz$-torsor $P^* = \Hom_{\dz}(P, \dz)$ has the same set as~$P$, but with the opposite action of $\dz$.)

We have the following canonical isomorphisms, where $\alpha$, $\beta$ and $\gamma$ are from $\breve{\Gamma}$:
\begin{equation}  \label{can}
[\alpha, \alpha] \lrto \dz \quad \mbox{and} \quad [\alpha, \beta] \otimes_{\dz} [\beta, \gamma] \lrto [\alpha, \gamma] \, \mbox{,}
\end{equation}
which satisfy evident compatibility condition and the last isomorphism satisfies the associativity condition for any four elements from $\breve{\Gamma}$.

\bigskip

{\em We fix any $r \in \dc^*$.}

For any two elements $\alpha$ and $\beta$ from $\breve{\Gamma}$
we define a one-dimensional $\dc$-vector space~$\mu_r(\alpha, \beta)$:
$$
\mu_r(\alpha, \beta) = \left([\alpha, \beta] \times^{\dz} \dc^*\right) \cup \{ 0 \}  \, \mbox{,}
$$
where $[\alpha, \beta] \times^{\dz} \dc^*$ is a $\dc^*$-torsor which is obtained from the $\dz$-torsor $[\alpha, \beta]$ and a homomorphism:
$$
\dz \lrto \dc^*  \, \mbox{,} \qquad n \longmapsto r^n  \, \mbox{.}
$$

Evidently, from~\eqref{can} we have  canonical isomorphisms:
\begin{equation}  \label{can_me}
\mu_r(\alpha, \alpha) \lrto \dc \quad \mbox{and} \quad \mu_r(\alpha, \beta) \otimes_{\dc} \mu_r(\beta, \gamma) \lrto
\mu_r(\alpha, \gamma)
\end{equation}
such that they satisfy compatibility condition and the last isomorphism satisfies the associativity condition.

\begin{example} \label{mu_r} \em
We consider $n \in \pi(\Gamma) = \dz$. Then $P_n = \pi^{-1}(n)$ is a $\dz$-torsor.
Besides, ${n =(n, -\infty) \in \breve{\Gamma}}$, and ${n+1 = (n+1, -\infty) \in \breve{\Gamma}}$.
The  one-dimensional $\dc$-vector space $\mu_r(n+1, n)$ is described as the space of all maps $\mu$ from $P_n$ to $\dc$ with the condition
$$
\mu(x +a)=\mu(x) r^{-a} \, \mbox{,} \quad \mbox{where} \quad x \in P_n \, \mbox{,} \quad a \in \dz  \, \mbox{.}
$$
The space $\mu_r(n+1, n)$ is canonically embedded into the space $\D'_+(P_n)$:
\begin{equation}  \label{measures-2}
 \mu   \longmapsto  a_{x,x}= \mu(x)  \, \mbox{,} \quad a_{y,x}= \mu(y) - \mu(y+1)= \mu(y)(1 - r^{-1}) \quad \mbox{for} \quad y < x \in P_n \, \mbox{.}
 \end{equation}

We note also that the natural embedding of the $\dz$-torsor $[n+1,n] = [R_{n+1,n}]$ to the one-dimensional vector space $\mu_r(n+1, n)$ is given as:
\begin{equation}  \label{nmap}
d \longmapsto \mu \, \mbox{,} \quad  \mbox{where} \quad \mu(x) = r^{ d \left(x + \dz_{\ge 0} \right) }  \, \mbox{,} \quad x \in P_n \, \mbox{.}
\end{equation}
\end{example}

\begin{nt} \label{Rm}\em
If a $\dz$-torsor $P_n$ is obtained from a locally compact Abelian group $\oo(n)/\oo(n+1)$ constructed by a two-dimensional local field $K$
with a partial topology and
 with  finite last residue field $\df_q$
(see Proposition~\ref{delta}), then the one-dimensional $\dc$-vector spaces $\mu_q(n+1, n)$ and $\mu(\oo(n)/ \oo(n+1))$
are canonically isomorphic through~\eqref{measures-2}, see also Remark~\ref{deform}.
\end{nt}

\begin{nt}  \label{more-gen} \em
More generally, for elements $\alpha_1$ and $\alpha_2$ from the set $\breve{\Gamma}_K$, where $K $ is a two-dimensional local field
with a partial topology and
 with finite last residue field $\df_q$,  using Remark~\ref{Rm}, we have a canonical isomorphism:
$$
\mu_q(\alpha_1, \alpha_2) \lrto \mu(\alpha_1, \alpha_2)
$$
such that this isomorphism is compatible with the second isomorphism  from  formula~\eqref{can_me}
and with isomorphism~\eqref{measures} for any three elements $\alpha_1, \alpha_2, \alpha_3$ from $\breve{\Gamma}_K$.
\quash{(We use also the following canonical isomorphisms:
$$
\mu_r(\beta_1, \beta_2) \lrto \dc   \qquad \mbox{and} \qquad  \mu(\beta_1, \beta_2) \lrto \dc  \, \mbox{,}
$$
where $\beta_1, \beta_2 \in \breve{\Gamma}_K$ and $\breve{\pi}(\beta_1) = \breve{\pi}(\beta_2)$.)}
\end{nt}

\bigskip

Now we define the spaces of functions and distributions on $\Gamma$:
$$
\D_{+,\alpha,r}(\Gamma)   \qquad \mbox{and} \qquad \D'_{+, \alpha, r}(\Gamma)  \, \mbox{,}
$$
where $\alpha \in \breve{\Gamma}$ and $r \in \dc^*$.

We recall that any integer $m \in \dz$ is also considered as an element $(m, -\infty) \in \breve{\Gamma}$.

\begin{defin} \label{D-space}
The $\dc$-vector space of functions $\D_{+,\alpha,r}(\Gamma) $ is a $\dc$-vector subspace of a space
$$\prod_{k \in \pi(\Gamma)= \dz} \D_+(\pi^{-1}(k))  \otimes_{\dc}
\mu_r(k+1, \alpha)$$
such that an element $\prod\limits_{k \in \pi(\Gamma)} f_k \otimes \mu_k$, where $f_k \in  \D_+(\pi^{-1}(k)) $ and $\mu_k \in \mu_r(k+1, \alpha)$, $\mu_k \ne 0 $, belongs to $\D_{+,\alpha,r}(\Gamma) $
if and only if
$\delta_{+\infty} (f_k) = \eta_{k+1}(f_{k+1})$, where $\mu_{k+1}= \eta_{k+1} \otimes\mu_k$
 and $\eta_{k+1}  \in \mu_r(k+2, k+1)$, for any $ k \in \pi(\Gamma)=\dz $.
\end{defin}

\begin{nt}  \em
This definition can be rewritten as follows.
We recall that there is a map $\breve{\pi} : \breve{\Gamma}  \lrto \dz$ which extends the map $\pi : \Gamma \lrto \dz$.
Let $i : \dz \lrto \breve{\Gamma} = \Gamma \cup \pi(\Gamma) $ be the natural embedding of $\dz = \pi(\Gamma)$.
Then we have
$$
\breve{\pi} \circ i = {\rm id}  \, \mbox{.}
$$
We introduce two linear maps $\breve{\pi}_*$ and $i^*$ from  the $\dc$-vector space
\begin{equation}  \label{space}
{\prod\limits_{k \in \pi(\Gamma)= \dz} \D_+(\pi^{-1}(k))  \otimes_{\dc}
\mu_r(k+1, \alpha) }
\end{equation}
to the $\dc$-vector space ${\prod\limits_{k \in \pi(\Gamma)= \dz} \mu_r(k,\alpha)}$. We consider an element  ${\prod\limits_{k \in \dz} f_k \otimes \mu_k}$, where $f_k \in \D_+(\pi^{-1}(k)) $ and $\mu_k \in \mu_r(k+1, \alpha)$, $\mu_k \ne 0 $. Then we define
$$
\breve{\pi}_*  \left(  \prod_{k \in \dz} f_k \otimes \mu_k  \right) =  \prod_{k \in \dz}  \eta_k(f_k) \cdot \mu_{k-1}  \quad \mbox{and}  \quad
i^* \left( \prod_{k \in \dz} f_k \otimes \mu_k \right) = \prod_{k \in \dz} \delta_{+\infty}(f_{k-1}) \cdot \mu_{k-1}  \, \mbox{,}
$$
where the element $\eta_{k}  \in \mu_r(k+1, k)$
is uniquely defined by an equation $\mu_{k}= \eta_{k} \otimes\mu_{k-1}$. Now an element $w$ from~\eqref{space}
belongs to $\D_{+,\alpha,r}(\Gamma)$
if and only if it satisfies
$$
\breve{\pi}_* (w) = i^* (w)  \, \mbox{.}
$$

\end{nt}

\medskip

\begin{defin}
The $\dc$-vector space of distributions is
$$
\D'_{+,\alpha,r}(\Gamma) =
\left( \bigoplus_{k \in \pi(\Gamma)} \D_+'(\pi^{-1}(k)) \otimes_{\dc}  \mu_r(\alpha, k+1) \right) / Y \, \mbox{,}
$$
where the $\dc$-vector subspace $Y$ is generated by elements
$$
h_k \otimes \mu_k - z_{k+1} \otimes \lambda_{k+1}  \, \mbox{,}
\qquad
 k \in \pi(\Gamma)=\dz \, \mbox{,}  $$
where  $ h_l \mbox{,} \; z_l \in \D'_+(\pi^{-1}(l))$ and $ \mu_l \mbox{,}  \; \lambda_l \in \mu_r(\alpha, l+1)
$,
with the conditions:
$$
h_k = \delta_{+ \infty}  \, \mbox{,} \quad \mu_k = \eta_{k+1} \otimes \lambda_{k+1}  \, \mbox{,}
\quad
 \eta_{k+1} \in \mu_r(k+2, k+1)  \subset \D'_+(\pi^{-1}(k+1)) \, \mbox{,} \quad z_{k+1} = \eta_{k+1} \, \mbox{.}
$$
\end{defin}

We have the following proposition.

\begin{prop} \label{p}
Let $\Gamma$ be a group as in the beginning of section~\ref{discr_2} (with the fixed central extension~\eqref{centr}).
Let $\alpha \in \breve{\Gamma}$, $r \in\dc^*$.
 The natural non-degenerate paring
\begin{equation}  \label{pair1}
\prod_{k \in \pi(\Gamma)= \dz} \D_+(\pi^{-1}(k))  \otimes_{\dc}
\mu_r(k+1, \alpha) \;  \times  \;      \bigoplus_{k \in \pi(\Gamma)} \D_+'(\pi^{-1}(k)) \otimes_{\dc}  \mu_r(\alpha, k+1)               \lrto \dc
\end{equation}
induces a non-degenerate pairing:
\begin{equation}  \label{pair2}
\D_{+, \alpha, r}(\Gamma)  \times \D'_{+, \alpha, r}(\Gamma) \lrto \dc \, \mbox{.}
\end{equation}
\end{prop}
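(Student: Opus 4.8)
The plan is to reformulate both spaces as the kernel and the cokernel of a single linear map, and then to split the non-degeneracy into an easy half that follows from the product/sum pairing and a harder half that reduces to the finite-level duality of Section~\ref{reas}.

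First I would set $W_k = \D_+(\pi^{-1}(k)) \otimes_{\dc} \mu_r(k+1,\alpha)$, so that the left-hand factor of~\eqref{pair1} is $P = \prod_k W_k$ and the right-hand factor is $S = \bigoplus_k W_k^\vee$, where $W_k^\vee = \D_+'(\pi^{-1}(k)) \otimes_{\dc} \mu_r(\alpha,k+1)$ is the full $\dc$-linear dual of $W_k$ (using $\D_+'(\pi^{-1}(k)) = \Hom_{\dc}(\D_+(\pi^{-1}(k)),\dc)$ together with the duality of the one-dimensional spaces $\mu_r(k+1,\alpha)$ and $\mu_r(\alpha,k+1)$ coming from~\eqref{can_me}). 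Then~\eqref{pair1} is the canonical pairing between a product and the direct sum of the dual factors; it is non-degenerate on each side because each $W_k^\vee$ separates the points of $W_k$. Using the reformulation in the Remark after Definition~\ref{D-space} I would write $\D_{+,\alpha,r}(\Gamma) = \Ker\Psi$ for $\Psi = \breve{\pi}_* - i^* : P \lrto Q$, $Q = \prod_k \mu_r(k,\alpha)$. A short computation with~\eqref{can_me} and the embedding~\eqref{measures-2} then shows that the adjoint $\Psi^\vee : \bigoplus_k \mu_r(\alpha,k) \lrto S$ sends an element $\mu_k \in \mu_r(\alpha,k+1)$ to $\eta \otimes (\mu_k \otimes \eta^{-1}) - \delta_{+\infty} \otimes \mu_k$ (supported in degrees $k+1$ and $k$, and independent of the auxiliary $\eta \in \mu_r(k+2,k+1)$); comparison with the generators $\delta_{+\infty} \otimes \mu_k - \eta_{k+1} \otimes \lambda_{k+1}$ of the subspace $Y$ gives $Y = \Image\Psi^\vee$.

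With these identifications the two halves separate. Since $\Image\Psi^\vee \subseteq (\Ker\Psi)^{\perp}$ always holds (annihilator taken inside $S$), the space $\D_{+,\alpha,r}(\Gamma)$ annihilates $Y$ and the pairing descends to~\eqref{pair2}. Non-degeneracy in the first argument is then immediate: if $w \in \Ker\Psi$ pairs trivially with $S/Y$, it pairs trivially with all of $S$, and since each $W_k^\vee$ is the full dual of $W_k$, every component of $w$ vanishes, so $w=0$.

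The main work is non-degeneracy in the second argument, i.e. showing $(\Ker\Psi)^{\perp} \subseteq \Image\Psi^\vee = Y$, and I would handle it by reduction to a finite interval. Let $s \in S$ be finitely supported, say in degrees $[n,m)$, and suppose $s$ annihilates $\Ker\Psi$. The key point is an extension lemma: every tuple satisfying the finitely many internal compatibility conditions on $[n,m)$ (the exact analogue $\widetilde{W}$ of the space $W$ of Section~\ref{reas}) extends to an element of $\Ker\Psi = \D_{+,\alpha,r}(\Gamma)$, constructed by solving $\delta_{+\infty}(f_k) = \eta_{k+1}(f_{k+1})$ successively to the right and to the left, which is always possible because $\delta_{+\infty}$ and each $\eta_{k+1}$ are surjective functionals on $\D_+(\pi^{-1}(k))$. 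Since $s$ is supported in $[n,m)$, this forces $s$ to annihilate $\widetilde{W}$. Now the restriction of $\Psi$ to this interval has \emph{finite-dimensional} target (one copy of $\mu_r(\,\cdot\,,\alpha)$ per internal condition), so the elementary identity $(\Ker)^{\perp} = \Image(\text{adjoint})$ is valid there; hence $s$ lies in the image of the corresponding finite adjoint, which is the subspace $V$ of Section~\ref{reas} and is contained in $\Image\Psi^\vee = Y$. Therefore $\bar s = 0$ in $\D'_{+,\alpha,r}(\Gamma)$. I expect the extension lemma, together with the bookkeeping that matches the generators of $Y$ with $\Image\Psi^\vee$ and the finite $V$, to be the only genuinely non-formal input; everything else is linear-algebra duality between a product and its restricted dual.
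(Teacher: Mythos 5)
Your proof is correct and is in substance the paper's own argument in different packaging: the paper truncates to finite windows $X_m\subset\prod_{-m\le k<m}\D_+(\pi^{-1}(k))\otimes_{\dc}\mu_r(k+1,\alpha)$ cut out by the $2m-1$ internal conditions and observes that $\D_{+,\alpha,r}(\Gamma)$ is the projective limit of the $X_m$ with \emph{surjective} transition maps while $\D'_{+,\alpha,r}(\Gamma)$ is the inductive limit of the full duals $X_m^{*}$ with injective transition maps, which is exactly your extension lemma together with your finite-level identity $(\Ker)^{\perp}=\Image(\mbox{adjoint})$. You have merely made explicit (via $\Psi$, $\Psi^{\vee}$ and the identification $Y=\Image\Psi^{\vee}$) the steps the paper compresses into ``it is easy to see''.
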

 {  \begin{proof}
For any integer number $m >0$ we consider a $\dc$-vector subspace
$$X_m \subset \prod_{-m \le k <m} \D_+(\pi^{-1}(k))  \otimes_{\dc}
\mu_r(k+1, \alpha)$$
defined by $2m-1$ linear conditions as the corresponding linear conditions in the definition of the space $\D_{+, \alpha, r}(\Gamma)$.
Then it is easy to see that
$$
\D_{+, \alpha, r}(\Gamma) =  \mathop{\lim_{\longleftarrow}}_{m >0 }   X_m    \qquad \mbox{and} \qquad
\D'_{+, \alpha, r}(\Gamma) = \mathop{\lim_{\lrto}}_{m > 0 }   X_m^*  \, \mbox{,}
$$
Besides,  the transition maps in the projective limit are surjective and the transition maps in the inductive limit are injective.
Therefore we obtain a natural non-degenerate pairing.
\end{proof}}

Now we have a theorem.

\begin{Th} \label{Theorem}
Let $K$ be a  two-dimensional local field with a partial topology and  with  finite last residue field $\df_q$.
Let $\alpha \in \breve{\Gamma}_K$.
 Then we have the following properties.
\begin{enumerate}
\item \label{it1} There is a natural surjection of $\dc$-vector spaces  (induced by~\eqref{Phi}):
\begin{equation}  \label{sur}
\D_{\alpha}(K)_{(\oo_K')^*}  \lrto \D_{+,\alpha,q}(\Gamma_K) \, \mbox{.}
\end{equation}
\item  \label{it2} There is a natural injection of $\dc$-vector spaces (induced by~\eqref{Phi'}):
\begin{equation}  \label{inj}
\D'_{+,\alpha,q}(\Gamma_K)  \lrto \D'_{\alpha}(K)^{(\oo_K')^*}   \, \mbox{.}
\end{equation}
\item
The pairing from Proposition~\ref{p} between the $\dc$-vector spaces  $\D_{+,\alpha,q}$ and  $\D'_{+,\alpha,q}$
is induced by  pairing~\eqref{dual_inv_two} between the $\dc$-vector spaces  $\D_{\alpha}(K)_{(\oo_K')^*}$ and $\D'_{\alpha}(K)^{(\oo_K')^*}$.
\end{enumerate}
\end{Th}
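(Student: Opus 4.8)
The plan is to deduce all three assertions from the finite-level results of Propositions~\ref{W} and~\ref{p}, and to pass to the limit only at the very end. First I would fix the dictionary that specializes the abstract rank-$2$ picture to $\Gamma=\Gamma_K$, $r=q$. By Remarks~\ref{Rm} and~\ref{more-gen} there are canonical isomorphisms $\mu_q(\alpha_1,\alpha_2)\simeq\mu(\alpha_1,\alpha_2)$ compatible with~\eqref{measures} and~\eqref{can_me}, and the $\dz$-torsor $\pi^{-1}(k)$ is precisely the torsor $T(k)$ attached to $\oo(k)/\oo(k+1)$; together with Proposition~\ref{prop_dual} and its dual these yield $\D_+(\pi^{-1}(k))\simeq\D(\oo(k)/\oo(k+1))_{(\oo_K')^*}$ and $\D'_+(\pi^{-1}(k))\simeq\D'(\oo(k)/\oo(k+1))^{(\oo_K')^*}$. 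After tensoring the source and target of $\Phi$ in~\eqref{Phi} with $\mu(n,\alpha)$ and using $\mu(k+1,n)\otimes_{\dc}\mu(n,\alpha)\simeq\mu(k+1,\alpha)$, the map $\phi\circ\Phi$ becomes, for each $n\le m$, a map
$$
\D(\oo(n)/\oo(m))_{(\oo_K')^*}\otimes_{\dc}\mu(m,\alpha)\lrto\prod_{n\le k<m}\D_+(\pi^{-1}(k))\otimes_{\dc}\mu_q(k+1,\alpha)
$$
whose image, by Proposition~\ref{W}, is exactly the finite-range version of the defining conditions of $\D_{+,\alpha,q}(\Gamma_K)$, namely the space $X_M$ of Proposition~\ref{p} when $n=-M$, $m=M$.

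Writing $A_M=\D(\oo(-M)/\oo(M))_{(\oo_K')^*}\otimes_{\dc}\mu(M,\alpha)$, Proposition~\ref{prop1} gives $\D_\alpha(K)_{(\oo_K')^*}=\varprojlim_M A_M$ with surjective transition maps, while Proposition~\ref{p} gives $\D_{+,\alpha,q}(\Gamma_K)=\varprojlim_M X_M$, again with surjective transitions; dually $\D'_\alpha(K)^{(\oo_K')^*}=\varinjlim_M A_M^*$ and $\D'_{+,\alpha,q}(\Gamma_K)=\varinjlim_M X_M^*$, where by Lemma~\ref{lem1} one has $A_M^*\simeq\D'(\oo(-M)/\oo(M))^{(\oo_K')^*}\otimes_{\dc}\mu(\alpha,M)$. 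The maps $\Phi_M\colon A_M\to X_M$ are surjective (Proposition~\ref{W}) and commute with the transition maps, so they assemble into a morphism of projective systems; their transposes $\Phi_M^*\colon X_M^*\to A_M^*$ are injective and assemble into a morphism of inductive systems. Item~\ref{it2} is then immediate: $\varinjlim\Phi_M^*$ is the injection~\eqref{inj}, since a filtered colimit of injections of vector spaces is injective.

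For item~\ref{it1} I would take $\varprojlim\Phi_M$, which is well defined and is the map~\eqref{sur}; the only real content is its surjectivity. Here one cannot argue by Mittag--Leffler on the kernels, since $\Ker\Phi_M$ is not understood. Instead I would lift compatibly by induction on $M$: given $(x_M)\in\varprojlim X_M$ and a lift $a_{M-1}\in A_{M-1}$ of $x_{M-1}$, choose (using surjectivity of $A_M\to A_{M-1}$) some $\widetilde a_M$ restricting to $a_{M-1}$; then $x_M-\Phi_M(\widetilde a_M)$ lies in $\Ker(X_M\to X_{M-1})$, which involves only the two new boundary components $k=-M$ and $k=M-1$, and Lemma~\ref{lem-int} supplies a correction in $\Ker(A_M\to A_{M-1})$ mapping onto it. This produces $a_M$ lifting $x_M$ and restricting to $a_{M-1}$, and the system $(a_M)$ is the required preimage. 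This inductive correction, which is exactly the device already used in the proof of Proposition~\ref{W}, is the step I expect to be the main obstacle.

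Finally, the third assertion is formal. At each level the defining pairing $X_M\times X_M^*\to\dc$ satisfies $\langle\Phi_M(a),\xi\rangle=\langle a,\Phi_M^*(\xi)\rangle$ by the definition of the transpose, i.e.\ it is compatible, through $\Phi_M$ and $\Phi_M^*$, with the level-$M$ pairing $A_M\times A_M^*\to\dc$ coming from~\eqref{dual_inv_two}. Since~\eqref{sur} is $\varprojlim\Phi_M$ and~\eqref{inj} is $\varinjlim\Phi_M^*$, passing to the compatible projective and inductive limits shows that the pairing of Proposition~\ref{p} is induced by~\eqref{dual_inv_two}, as required.
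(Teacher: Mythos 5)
Your proposal is correct and follows essentially the same route as the paper: both reduce the double limits to single-index limits over $m>0$ (via Propositions~\ref{prop1} and~\ref{p}), build the maps from $\Phi$ and $\Phi'$ at finite level, obtain injectivity of~\eqref{inj} from exactness of filtered colimits, prove surjectivity of~\eqref{sur} by the same inductive lifting with corrections supplied by Lemma~\ref{lem-int} as in Proposition~\ref{W}, and deduce the pairing statement formally. Your write-up merely makes explicit the compatible-lift induction that the paper's proof states in one sentence.
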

{\begin{proof}
We note that for any $k \in \dz$ a $\dz$-torsor $\pi^{-1}(k)$
is canonically identified with the $\dz$-torsor constructed by $1$-dimensional $\bar{K}$-vector space $\oo(k)/ \oo(k+1)$
(see Proposition~\ref{delta}).

As in the proof of Proposition~\ref{prop1}  the spaces $\D_{\alpha}(K)_{(\oo_K')^*}$
and  $\D'_{+,\alpha,q}(\Gamma_K)$ can be defined as ordinary (not double) projective and inductive limits on $m > 0$ of the spaces
$\D(\oo(-m) / \oo(m))_{(\oo_K')^*}  \otimes_{\dc} \mu(m, \alpha)$  and
$\D'(\oo(-m) / \oo(m))^{(\oo_K')^*}  \otimes_{\dc} \mu(\alpha, m)$
  correspondingly.

Now the maps~\eqref{sur}   and~\eqref{inj}   are constructed by means of maps~\eqref{Phi} and~\eqref{Phi'} using also the tensor products with the identity maps on the corresponding $1$-dimensional $\dc$-vector spaces $\mu(\cdot, \cdot)$, and then taking the limits.

To prove that map~\eqref{sur} is surjective we use induction on $m$ as in the proof of Proposition~\ref{W} with the help of Lemma~\ref{lem-int}.

To obtain that the map~\eqref{inj} is injective we note that we take an inductive limit (see~\eqref{Phi'}) which is an exact functor.

The statement on the pairing follows, since pairing~\eqref{pair2} is induced by  pairing~\eqref{pair1} which is induced by  pairing~\eqref{dual_inv_two} in our case.
\end{proof}
}

\begin{example}  \em
We keep notation of Theorem~\ref{Theorem}.
Let $\gamma \in \breve{\Gamma}_K$  and $b \in \mu(\alpha, \gamma)$.
An element $\delta_{\gamma, b}  \in \D'_{\alpha}(K)^{(\oo_K')^*}  $ which was constructed in Section~\ref{charcteristic} belongs to the subspace
  $\D'_{+,\alpha,q}(\Gamma_K)$.
\end{example}

\medskip

Let $\Gamma$  be a group as  in the beginning of section~\ref{discr_2} (with the fixed central extension~\eqref{centr}). We consider a subset $Z \subset \Gamma$
such that for any $n \in \dz$ we have
\begin{equation}  \label{subset}
\pi^{-1}(n)  \cap Z = \dz_{\ge 0} + x
\end{equation}
for some $x \in \pi^{-1}(n)$. We construct an element $\delta_{Z} \in \D_{+, \alpha, r} $, where $\alpha  \in \breve{\Gamma}$
and $r \in \dc^{*}$:
$$
\delta_Z = \prod_{k \in \pi(\Gamma)= \dz} f_{k,Z} \otimes \mu_{k,Z}  \; \subset \; \prod_{k \in \dz}  \D_+(\pi^{-1}(k))  \otimes_{\dc}
\mu_r(k+1, \alpha)
\, \mbox{,}
$$
where the function $f_{k,Z} \in \D(\pi^{-1}(k))$ is equal to $1$ on the subset $\pi^{-1}(k)  \cap Z $ and this function is equal to $0$ on the complement
to this subset, the element ${\mu_{k,Z} \in \mu_r(k+1, \alpha)}$  is uniquely defined by an element
$d_{Z, k+1, \alpha} \in [k+1, \alpha]$
through the canonical embedding  $[\beta_1, \beta_2] \hookrightarrow \mu_r(\beta_1, \beta_2)$ for any $\beta_1, \beta_2 \in \breve{\Gamma}$, and an element
$d_{Z,\beta_1, \beta_2}  \in [\beta_1, \beta_2]$ is defined as:
\begin{gather} \nonumber
d_{Z, \beta_1, \beta_2} \otimes d_{Z, \beta_2, \beta_1} =0 \in \dz \quad \mbox{and}\\  \label{dz}
\mbox{if} \quad
\beta_1 \ge \beta_2 \quad \mbox{then} \quad d_{Z,\beta_1, \beta_2} \in [R_{\beta_1, \beta_2}] \quad \mbox{such that} \quad
d_{Z,\beta_1, \beta_2}  \left(Z \cap R_{\beta_1, \beta_2}\right) =0 \, \mbox{.}
\end{gather}

\begin{nt} \label{E}\em
Let $\Gamma = \Gamma_K$, where $K$ be a two-dimensional local field with  a partial topology. Then an element $\delta_Z   \in \D_{+, \alpha, r} $
is the image  of the characteristic function ${\delta_{E} \in \D_{\alpha}(K)}$
for an appropriate subgroup $E \subset K$ (see section~\ref{charcteristic})
under homomorphism~\eqref{sur}.
\end{nt}

We have a proposition.
\begin{prop} \label{is}
We fix a subset $Z \subset \Gamma$ as above (see~\eqref{subset}). Then an element ${\delta_Z \in  D_{+, \alpha, r}(\Gamma)}$ defines an isomorphism
$\psi_Z$ of $\dc$-vector spaces:
$$
\D_{+, \alpha, r} (\Gamma)  \stackrel{\psi_Z}{\lrto} \Omega  \; \subset \prod_{k \in \dz}  \D_+ (\pi^{-1}(k)) \;   \, \mbox{,} \quad  \mbox{where}
 \quad \psi_Z\left(\prod_{k \in \dz }f_k \otimes \mu_k\right) = \prod_{k \in \dz}(\mu_k/\mu_{k,Z}) f_k \, \mbox{,}
$$
 and the subspace $\Omega$
consists of elements $\prod\limits_{k \in \dz} g_k$
such that
$\delta_{+ \infty}(g_k)= \eta_{k+1, Z}(g_{k+1})   $,
{ where the element $\eta_{k+1,Z}  \in \mu_r(k+2, k+1)$ is
defined from the equality $\mu_{k,Z} \otimes \eta_{k+1, Z} = \mu_{k+1,Z}$ (the element $\eta_{k+1,Z}$ is also the image of an element  $ d_{Z, k+2, k+1} \in [k+2, k+1]$ (see~\eqref{dz}) under the natural map~\eqref{nmap}).}

Similarly, there is an isomorphism $\psi'_Z$ from  $\D'_{+, \alpha, r} (\Gamma)$ to
the quotient space of a space $\bigoplus\limits_{k \in \dz}  \D'_+ (\pi^{-1}(k)) $ by the corresponding subspace.
\end{prop}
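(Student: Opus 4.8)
The plan is to realize $\psi_Z$ as the restriction to $\D_{+, \alpha, r}(\Gamma)$ of a product of trivializations of the one-dimensional twisting spaces, and then to verify that under this product the linear conditions defining $\D_{+, \alpha, r}(\Gamma)$ (Definition~\ref{D-space}) turn into exactly the conditions defining $\Omega$. First I would observe that each $\mu_{k,Z}$ is a \emph{nonzero} element of the one-dimensional space $\mu_r(k+1, \alpha)$: it is the image of the $\dz$-torsor element $d_{Z, k+1, \alpha} \in [k+1, \alpha]$ under the canonical embedding $[k+1, \alpha] \hookrightarrow \mu_r(k+1, \alpha)$ of the shape~\eqref{nmap}, which lands in the $\dc^*$-torsor part of $\mu_r(k+1,\alpha)$. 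Hence $\mu_{k,Z}$ is a basis of $\mu_r(k+1,\alpha)$ and yields a canonical isomorphism $\D_+(\pi^{-1}(k)) \otimes_{\dc} \mu_r(k+1, \alpha) \lrto \D_+(\pi^{-1}(k))$, $f_k \otimes \mu_k \longmapsto (\mu_k / \mu_{k,Z}) f_k$. Taking the product over $k$ gives an isomorphism $\Psi_Z$ of the ambient space $\prod_k \D_+(\pi^{-1}(k)) \otimes_{\dc} \mu_r(k+1, \alpha)$ onto $\prod_k \D_+(\pi^{-1}(k))$, and $\psi_Z$ is the restriction of $\Psi_Z$ to the subspace $\D_{+, \alpha, r}(\Gamma)$.

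The heart of the argument is a bookkeeping of scalars. For an element $\prod_k f_k \otimes \mu_k$ write $c_k = \mu_k / \mu_{k,Z} \in \dc$ and $g_k = c_k f_k$, so that $\Psi_Z(\prod_k f_k \otimes \mu_k) = \prod_k g_k$. Writing $\eta_{k+1} = e_{k+1}\,\eta_{k+1, Z}$ with $e_{k+1} \in \dc$ (a ratio in the one-dimensional space $\mu_r(k+2, k+1)$), and comparing $\mu_{k+1} = \eta_{k+1} \otimes \mu_k$ with $\mu_{k+1,Z} = \eta_{k+1,Z} \otimes \mu_{k,Z}$ through~\eqref{can_me}, I would deduce $c_{k+1} = e_{k+1} c_k$. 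Since the pairing $\eta \mapsto \eta(f)$ induced by the embedding~\eqref{measures-2} is linear in $\eta$ and in $f$, the defining condition $\delta_{+\infty}(f_k) = \eta_{k+1}(f_{k+1})$ becomes, after substituting $f_k = g_k / c_k$ and $\eta_{k+1}(f_{k+1}) = (e_{k+1}/c_{k+1})\,\eta_{k+1,Z}(g_{k+1})$, the equality $\frac{1}{c_k}\delta_{+\infty}(g_k) = \frac{e_{k+1}}{c_{k+1}}\eta_{k+1,Z}(g_{k+1})$; the relation $c_{k+1} = e_{k+1} c_k$ cancels the scalars and yields precisely $\delta_{+\infty}(g_k) = \eta_{k+1,Z}(g_{k+1})$, the condition defining $\Omega$. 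As this is an equivalence for each $k$, $\Psi_Z$ carries $\D_{+,\alpha,r}(\Gamma)$ bijectively onto $\Omega$, so $\psi_Z$ is an isomorphism. The case $\mu_k = 0$ (that is, $c_k = 0$) is harmless: the factor $f_k \otimes \mu_k$ is then zero, and the trivialization is an honest isomorphism of tensor products, requiring no nonvanishing assumption.

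For the dual statement I would trivialize in the same way, now using the canonical basis vector $\mu_{k,Z}^{-1} \in \mu_r(\alpha, k+1)$ (the inverse of $\mu_{k,Z}$ under~\eqref{can_me}) to identify each summand $\D'_+(\pi^{-1}(k)) \otimes_{\dc} \mu_r(\alpha, k+1)$ with $\D'_+(\pi^{-1}(k))$. The direct sum of these identifications is an isomorphism of $\bigoplus_k \D'_+(\pi^{-1}(k)) \otimes_{\dc} \mu_r(\alpha, k+1)$ onto $\bigoplus_k \D'_+(\pi^{-1}(k))$, and the same scalar bookkeeping shows that it carries the generating relations of the subspace $Y$ to the analogous relations written with $\eta_{k+1,Z}$ in place of $\eta_{k+1}$. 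Passing to quotients then produces the desired isomorphism $\psi'_Z$ onto the quotient of $\bigoplus_k \D'_+(\pi^{-1}(k))$ by this corresponding subspace.

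The step I expect to require the most care is the scalar bookkeeping in the second paragraph: keeping the tensor order in~\eqref{can_me} consistent with the conventions of Definition~\ref{D-space}, and checking that the ratios $e_{k+1}$ and $c_k$ combine so that the twist by $\delta_Z$ cancels exactly. Everything else reduces to the observation that a nonzero vector in a one-dimensional space trivializes it, together with the structure already exploited in Proposition~\ref{p}.
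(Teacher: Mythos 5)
Your argument is correct and is precisely the routine verification the paper leaves implicit (Proposition~\ref{is} is stated without proof): since each $\mu_{k,Z}$ lies in the $\dc^*$-torsor part of $\mu_r(k+1,\alpha)$ and is therefore a basis vector, trivializing the twisting lines by these vectors and tracking the scalars $c_k$, $e_{k+1}$ via $c_{k+1}=e_{k+1}c_k$ converts the defining conditions of $\D_{+,\alpha,r}(\Gamma)$ into those of $\Omega$, and dually carries the generators of $Y$ onto the corresponding relations written with $\eta_{k+1,Z}$. No gaps.
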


\begin{nt} \em
Let $K$ be one of two-dimensional local fields from~\eqref{examples}.
Then the $\dc$-vector space $\D'_{+,\alpha,q}(\Gamma_K) $
is isomorphic to the space $\Psi$ from Definition~1 of~\cite{OsipPar2}
(see also Theorem~1 from~\cite{OsipPar2}). This isomorphism comes from Remark~\ref{E}  and Proposition~\ref{is}
after the fixing a subgroup $E \subset K$ which is isomorphic to one of the following subgroups:
$$
\df_q[[u]]((t)) \, \mbox{,}  \qquad \dz_p((t)) \, \mbox{,}  \qquad \dq_p \cdot (\dz_p[[u]])
$$
(see again~\eqref{examples}).
\end{nt}

\section{Fourier transform}   \label{Sft}

\subsection{Dimension~$1$} \label{dim-1}
We recall the following facts on the Fourier transform in the one-dimensional case.

Let $L$ be a one-dimensional local field with finite  residue field $\df_q$. Then the additive group of $L$ is self-dual with respect to the Pontryagin duality.
This duality is given by a non-degenerate bilinear continuous pairing:
\begin{equation}  \label{ort}
\langle \; \, ,  \; \, \rangle \, : \,  L \times  L      \lrto {\mathbb U}(1)  \, \mbox{,} \qquad  \mbox{where}  \quad {\mathbb U}(1) = \{z \in \dc^* \; \mid  \;   | z | =1      \}  \, \mbox{.}
\end{equation}

For example, if $L \simeq \df_q((u))$, then we fix a non-trivial character $\psi : \df_q \lrto \dc^*$ and choose a non-zero differential form $\omega \in \Omega^1_L$. We define a pairing~\eqref{ort} as
  $$
\langle a, b \rangle_{\omega}  =  \psi (\res (ab \, \omega))    \qquad  \mbox{for} \quad  a \, \mbox{,} \,  b  \, \in L \mbox{.}
$$

Let $\mu \in \mu(L) \subset \D'(L)$ be a Haar measure, $\mu \ne 0$. Then we have a Fourier transform:
\begin{gather*}
\F_{\mu ,\omega} \; : \; \D(L)  \lrto \D(L)  \\
 \F_{\mu ,\omega}(f)(y)= \int_L f(x) \langle -y, x \rangle_{\omega} \, d \mu(x) \, \mbox{,}
\qquad \mbox{where} \quad f \in \D(L) \quad \mbox{and} \quad y \in L  \, \mbox{.}
\end{gather*}
Thus, the Fourier transform depends on the choices of a character $\psi$, a differential form $\omega $ and a measure $\mu$. We omit the character
$\psi$ in the notation for the Fourier transform, since this character will be fixed throughout the whole paper.

Multiplicative group $L^*$ of the field $L$ acts on the additive group of the field $L$ by multiplication:
$$
\mbox{for any} \quad l \in L^* \quad \mbox{we have an isomorphism}   \quad  L \stackrel{\times l}{\longrightarrow} L \, \mbox{,} \quad x \longmapsto lx \, \mbox{,}
\quad \mbox{where}  \quad x \in L \, \mbox{.}
$$
This isomorphism  defines an action
$l^*$ on the $\dc$-vector  spaces $\D(L)$, $\D'(L)$ and $\mu(L)$, where $l^*(f)(x)= f(l^{-1} x)$ and so on (for $x \in L$, $f \in \D(L)$).

We have the following properties
\begin{equation} \label{F1}
\F_{\mu ,\omega}(l^*(f))= (\mu/ l^*(\mu)) \, (l^{-1})^*(\F_{\mu ,\omega}(f))  \, \mbox{,}
\end{equation}
$$
\F_{\mu ,l\omega}(f)= (l^{-1})^* \F_{\mu ,\omega}(f)   \, \mbox{.}
$$

 We consider an element $\mu^{-1} \in \mu(L)$ which is  is uniquely defined by the property $\mu^{-1}(E^{\perp})= \mu(E)^{-1}$, where $E \subset L$ is an open compact subgroup and the ortogonal subgroup $E^{\perp}$ is taken with respect to the pairing~\eqref{ort}.
From~\eqref{F1}, for $G \in \D'(L)$ and for a map
$$\F_{\mu^{-1} ,\omega} \; :  \; \D'(L)  \lrto \D'(L) $$
 (which is conjugate to the map $\F_{\mu^{-1} ,\omega} : \D(L)  \lrto \D(L)$ with respect to~\eqref{dual_1}) we have
\begin{equation}  \label{F2}
\F_{\mu^{-1} ,\omega}(l^*(G))= (l^*(\mu)/ \mu) \, (l^{-1})^* (\F_{\mu^{-1} ,\omega}(G))  \, \mbox{.}
\end{equation}

Since the group $\oo_L^*$ acts trivially on the space $\mu(L)$, from~\eqref{F1} and~\eqref{F2} we obtain that the maps $\F_{\mu ,\omega}$ and $\F_{\mu^{-1} ,\omega}$ induce the well-defined  maps   on the $\dc$-vector  spaces $\D(L)_{\oo_L^*}$ and $\D'(L)^{\oo_L^*}$.
For the characteristic function $\delta_{m_L^n}$ of a fractional ideal $m_L^n$, where $n \in \dz$, we have
$$
\F_{\mu ,\omega}(\delta_{m_L^n}) = \mu(m_L^n) \, \delta_{(m_L^n)^{\perp}}  \, \mbox{,}
$$
where $\mu(m_L^n) = q^{-n} \mu(\oo_L)$ and   $\delta_{(m_L^n)^{\perp}} = \delta_{m_L^{-n - \nu_L(w)}}$  (when $L \simeq \df_q((u))$).

\subsection{Rank (or dimension)~$2$}

\subsubsection{Involution on the set $\breve{\Gamma}$}   \label{involu}
Let $\Gamma$ be a group as in the beginning of section~\ref{discr_2} (with the fixed central extension~\eqref{centr}).
The group $\Gamma$ acts on the set $\breve{\Gamma}$ such that this action extends the action which comes from the group operation in $\Gamma$ and the formula for the remaining action is:
$$
(n,p) \circ (m, -\infty) = (n+m , -\infty)  \, \mbox{.}
$$
Hence we obtain a commutative operation
$$
\breve{\Gamma} \times \breve{\Gamma} \lrto \breve{\Gamma}  \, \mbox{,} \qquad \alpha \times \beta  \longmapsto \alpha \circ \beta
$$
such that this operation extends the action of $\Gamma$ on $\breve{\Gamma}$ and it is equal to the addition in the group $\dz$ when it is restricted to $\pi(\Gamma)$.

\medskip

We fix an element $\gamma \in \Gamma$.

We {\em define} an element  $\beta^{\perp(\gamma)} \in \breve{\Gamma}$
for any element  $\beta \in \breve{\Gamma}$ by means of the following easy proposition.

\begin{prop}
For any element $\beta \in \breve{\Gamma}$ there exists  a well-defined element $\beta^{\perp(\gamma)} \in \breve{\Gamma}$
which is the minimum among all elements $\varphi \in \breve{\Gamma}$
with the property:
$$
\varphi \circ \beta \circ \gamma  >0  \, \mbox{,}
$$
where $0 \in \Gamma \subset \breve{\Gamma}$ is the zero of the group $\Gamma$.
\end{prop}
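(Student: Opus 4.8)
The plan is to reduce the problem to the analysis of a single fibre of $\breve\pi$. First I would set $\delta=\beta\circ\gamma\in\breve\Gamma$ (this is well defined since $\gamma\in\Gamma$ acts on $\breve\Gamma$) and put $a=\breve\pi(\delta)=\breve\pi(\beta)+\pi(\gamma)$. The crucial elementary observation is that $\breve\pi$ is additive for the operation $\circ$, i.e. $\breve\pi(\varphi\circ\delta)=\breve\pi(\varphi)+a$ for every $\varphi\in\breve\Gamma$; this follows at once from the two defining formulas for $\circ$ together with the fact that $\pi$ is a homomorphism. Writing $S=\{\varphi\in\breve\Gamma:\varphi\circ\delta>0\}$, the goal is to exhibit a least element of $S$.

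Next I would dispose of all fibres except the critical one. If $\breve\pi(\varphi)>-a$ then $\breve\pi(\varphi\circ\delta)>0$, so $\varphi\circ\delta>0$ automatically by the definition of the order on $\breve\Gamma$; thus $\{\varphi:\breve\pi(\varphi)>-a\}\subset S$. Conversely, if $\breve\pi(\varphi)<-a$ then $\breve\pi(\varphi\circ\delta)<0$ and $\varphi\notin S$. Since in the order on $\breve\Gamma$ every element at level $-a$ is strictly smaller than every element at level $>-a$, a least element of $S$, if it exists, must be sought at the level $\breve\pi(\varphi)=-a$, and the minimum taken over that level (when the level contributes to $S$) is automatically the minimum of all of $S$.

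It then remains to treat the fibre over $-a$, where two cases occur according to whether $\delta\in\Gamma$ or $\delta\in\pi(\Gamma)$. If $\delta=(a,-\infty)\in\pi(\Gamma)$, then for every $\varphi$ at level $-a$ one computes $\varphi\circ\delta=(0,-\infty)$, which is $<0$; hence $S$ contains nothing at level $-a$, so $S=\{\varphi:\breve\pi(\varphi)\ge 1-a\}$ and its least element is $(1-a,-\infty)$. If instead $\delta\in\Gamma$, the fibre of $\breve\pi$ over $-a$ consists of the $\dz$-torsor $\pi^{-1}(-a)$ together with its bottom element $(-a,-\infty)$; the latter satisfies $(-a,-\infty)\circ\delta=(0,-\infty)<0$, so it is not in $S$. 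On the torsor $\pi^{-1}(-a)$ the map $\varphi\longmapsto\varphi\circ\delta$ is translation by $\delta$, hence an order-preserving bijection onto $\Ker\pi=\pi^{-1}(0)$; under it the condition $\varphi\circ\delta>0$ becomes membership in the positive cone of $\Ker\pi\cong\dz$, which is bounded below with least element $1$. Its preimage is therefore the least element of $S$ at level $-a$, and hence of $S$.

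The step I expect to require the most care is this last one: what guarantees that a minimum exists, and not merely an infimum, is the fact that $\breve\Gamma$ is \emph{not} well ordered (a fibre is order-isomorphic to $\{-\infty\}\cup\dz$), so one must verify that the relevant subset of the critical fibre is bounded below. This boundedness is precisely the statement that the positive cone of $\Ker\pi\simeq\dz$ is bounded below by $1$, combined with the remark that the bottom element $(-a,-\infty)$ of the fibre fails the strict inequality. Once this is in place, the least element is unique, and it is independent of all choices because $\delta$, the linear order, and the operation $\circ$ are all canonical; this gives the well-definedness of $\beta^{\perp(\gamma)}$.
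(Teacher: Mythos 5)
Your argument is correct and complete; the paper itself offers no proof of this proposition (it is labelled ``easy'' and $\beta^{\perp(\gamma)}$ is simply \emph{defined} by means of it), and your fibre-by-fibre analysis along $\breve{\pi}$ --- reducing to the critical level $-\breve{\pi}(\beta\circ\gamma)$ and checking boundedness below there, with the two cases $\beta\circ\gamma\in\Gamma$ and $\beta\circ\gamma\in\pi(\Gamma)$ --- is exactly the natural way to supply the omitted details. Your answer also reproduces the paper's subsequent example ($(n,-\infty)^{\perp((0\oplus 0))}=(-n+1,-\infty)$ and $(a\oplus b)^{\perp((0\oplus 0))}=(-a\oplus -b+1)$), which confirms the computation.
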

It is clear that $\beta \longmapsto \beta^{\perp(\gamma)}$ is an involution on the set $\breve{\Gamma}$, and this involution preserves the subset $\Gamma$.

\begin{example}
Let $\Gamma = \dz \oplus \dz$, where the first group $\dz$  corresponds to $\pi(\Gamma)$.  Then we have
$$
(n, -\infty)^{\perp((0 \oplus 0))} = (-n+1, - \infty)  \qquad \mbox{and} \qquad
(a \oplus b)^{\perp((0 \oplus 0))} = (-a  \oplus -b+1)  \, \mbox{,}
$$
where $n,a,b $ are from $\dz$.
\end{example}

\begin{prop} \label{inol-prop}
We have the following properties.
\begin{enumerate}
\item For any $\gamma_1, \gamma_2 \in \Gamma$ and for any $\beta \in \breve{\Gamma}$ we have
$$
\beta^{\perp(\gamma_1 + \gamma_2)} =  (- \gamma_2) \circ \beta^{\perp(\gamma_1)}   \, \mbox{.}
$$
\item
\label{item-2}
For any $\phi, \gamma \in \Gamma$ and for any $\alpha \in \breve{\Gamma}$ we have
$$
(\phi \circ \alpha)^{\perp(\gamma)} = (- \phi)  \circ \alpha^{\perp(\gamma)}  \, \mbox{.}
 $$
 \item
 \label{item-3}
  For any $\gamma \in \Gamma$ and $\alpha, \beta \in \breve{\Gamma}$
 there is a canonical isomorphism of $\dz$-torsors
 \begin{equation}  \label{ab}
 [\alpha, \beta]  \lrto [\alpha^{\perp(\gamma)}, \beta^{\perp(\gamma)}]
 \end{equation}
 which is compatible with isomorphisms~\eqref{can} for any three elements from $\breve{\Gamma}$.
 \item For any $\phi, \gamma \in \Gamma$ and $\alpha, \beta \in \breve{\Gamma}$ there is a commutative diagram
 \begin{equation}  \label{diagram}
 \xymatrix{
[\alpha, \beta ]  \ar[r]  \ar[d] & [\alpha^{\perp(\gamma)}, \beta^{\perp(\gamma)}] \ar[d]\\
[\phi \circ \alpha, \phi \circ \beta ]  \ar[r] & [(- \phi)  \circ \alpha^{\perp(\gamma)}, (- \phi)  \circ \beta^{\perp(\gamma)}]
}
 \end{equation}
 where the horizontal arrows are isomorphisms which follow from items~\ref{item-2} and~\ref{item-3}, and the vertical arrows are isomorphisms induced by actions of elements $\phi$ and $- \phi$ correspondingly.
\end{enumerate}
\end{prop}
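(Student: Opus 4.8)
The plan is to dispatch items (1) and (2) by a single change of variables in the defining minimisation, and then to read off (3) and (4) at the level of the $\dz$-torsors $[\,\cdot\,,\cdot\,]$. The only structural input needed for (1) and (2) is that, for a fixed $\delta\in\breve{\Gamma}$, the operation $\varphi\mapsto\varphi\circ\delta$ is an order-preserving bijection of $\breve{\Gamma}$: one has $\breve{\pi}(\varphi\circ\delta)=\breve{\pi}(\varphi)+\breve{\pi}(\delta)$ so the primary order is preserved, and inside a fibre the operation is translation by an element of $\Ker\pi=\dz$, hence order-preserving there too (the elements $(n,-\infty)$ obey the same formula). For (1), associativity of $\circ$ holds in the combination we need, since $\circ$ with a $\Gamma$-element either uses the group law of $\Gamma$ or merely adds $\pi$-values; thus the inequality $\varphi\circ\beta\circ(\gamma_1+\gamma_2)>0$ rewrites as $(\varphi\circ\gamma_2)\circ\beta\circ\gamma_1>0$. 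Setting $\psi=\varphi\circ\gamma_2$ turns this into the inequality defining $\beta^{\perp(\gamma_1)}$, and since $\psi\mapsto\psi\circ(-\gamma_2)=\varphi$ is an order-isomorphism it carries minimum to minimum, giving $\beta^{\perp(\gamma_1+\gamma_2)}=(-\gamma_2)\circ\beta^{\perp(\gamma_1)}$. Item (2) is the identical computation with $\phi$ inserted on the $\beta$-side: $\varphi\circ(\phi\circ\alpha)\circ\gamma>0$ becomes $(\varphi\circ\phi)\circ\alpha\circ\gamma>0$, and the substitution $\psi=\varphi\circ\phi$ yields $(\phi\circ\alpha)^{\perp(\gamma)}=(-\phi)\circ\alpha^{\perp(\gamma)}$.

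For item (3), by (1) the involution for a general $\gamma$ differs from the one for $\gamma=0$ by the order-isomorphism $(-\gamma)\circ(\,\cdot\,)$, whose effect on the torsors is the canonical isomorphism coming from the $\circ$-action; so it suffices to construct \eqref{ab} for $\gamma=0$ and to check compatibility with \eqref{can} there. From the explicit minimisation one checks that on $\Gamma$ the involution $\perp(0)$ is the affine map $\beta\mapsto\kappa-\beta$, where $\kappa\in\Ker\pi$ is the positive generator; in particular it is order-reversing. Since $R_{\alpha,\beta}$ consists only of elements of $\Gamma$, the bijection $\perp(0)$ carries $R_{\alpha,\beta}$ onto a set differing by at most two elements from $R_{\beta^{\perp},\alpha^{\perp}}$, and transport along this cardinality-preserving bijection produces a $\dz$-torsor map. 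The crux, and the main obstacle of the whole proposition, is that this transport lands in the \emph{dual}: because $\perp(0)$ reverses the order inside every fibre it sends the upward reference rays $\dz_{\ge 0}+Y$ defining $\tau$ to downward rays, and on a single fibre $\pi^{-1}(n)\cong\dz$ the downward-ray torsor is canonically dual to the upward-ray one, via the perfect $\dz$-valued pairing $(d,e)\mapsto d(A_{\uparrow})+e(A_{\downarrow})$ over any partition $A_{\uparrow}\sqcup A_{\downarrow}=\pi^{-1}(n)$ (the value is partition-independent by finite-difference invariance). Using the disjoint-union multiplicativity $[R_1\sqcup R_2]\simeq[R_1]\otimes_{\dz}[R_2]$ fibre by fibre, this dualisation propagates across the whole interval, so that transport lands in $([R_{\beta^{\perp},\alpha^{\perp}}])^{*}=[\beta^{\perp},\alpha^{\perp}]^{*}=[\alpha^{\perp},\beta^{\perp}]$, exactly as claimed. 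Compatibility with \eqref{can} is then the assertion that $\perp(0)$ respects concatenation of intervals and addition of virtual cardinalities, which is immediate because it is realised by a single bijection of $\Gamma$.

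Item (4) is pure naturality of the diagram \eqref{diagram}. Its horizontal arrows are the isomorphisms of (3) (the top-right corner rewritten by (2)), while its vertical arrows are the torsor isomorphisms induced by the order-preserving translations $\phi\circ(\,\cdot\,)$ and $(-\phi)\circ(\,\cdot\,)$. Commutativity amounts to the identity ``translate by $\phi$, then apply $\perp$'' $=$ ``apply $\perp$, then translate by $-\phi$'' on the underlying subsets of $\Gamma$, which is precisely item (2) read on $\Gamma$-elements. Since all four arrows are induced by honest bijections of the relevant subsets of $\Gamma$ that transform cardinalities compatibly, tracing a single generator of any one of the torsors along both routes yields the same element, and the square commutes.

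I expect the only genuinely delicate point to be the dualisation in (3): one has to recognise that the order-reversing involution forces the upward/downward flip of reference rays, that this flip is exactly passage to the dual $\dz$-torsor, and that the placement of the $*$ comes out so the target is $[\alpha^{\perp(\gamma)},\beta^{\perp(\gamma)}]$ rather than $[\beta^{\perp(\gamma)},\alpha^{\perp(\gamma)}]$. Everything else is bookkeeping with order-preserving translations.
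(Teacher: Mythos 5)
Your proposal is correct and follows essentially the same route as the paper: items (1) and (2) by unwinding the definition of $\perp(\gamma)$ via an order-preserving change of variables, item (3) by realising $[\alpha^{\perp(\gamma)},\beta^{\perp(\gamma)}]$ as the dual torsor $[R_{\beta^{\perp(\gamma)},\alpha^{\perp(\gamma)}}]^*$ through the complementary-set pairing $d_1(A)+d_2(A^{\perp})$, and item (4) by naturality from (2) and the explicit form of (3). Your reduction to $\gamma=0$ and the fibrewise phrasing of the duality are only cosmetic variations on the paper's single global formula.
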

\begin{proof}
The first two statements follow from the definition of the involution on $\breve{\Gamma}$.

For the proof of the third statement  it is enough to suppose that $\alpha \ge \beta$ and then to construct a canonical isomorphism:
\begin{equation}  \label{R-can}
[R_{\alpha, \beta}] \lrto  [R_{\beta^{\perp(\gamma)}, \alpha^{\perp(\gamma)}}]^*  \, \mbox{.}
\end{equation}
Indeed, then we would have isomorphisms (for $\alpha \ge \beta$) :
$$
[\alpha, \beta] = [R_{\alpha, \beta}] \lrto  [R_{\beta^{\perp(\gamma)}, \alpha^{\perp(\gamma)}}]^* = [\alpha^{\perp(\gamma)}, \beta^{\perp(\gamma)}] \, \mbox{.}
$$

We note that for a $\dz$-torsor $P$ the $\dz$-torsor $P^*$ is canonically isomorphic to the set of all functions $f$ from $P$ to $\dz$
with the property $f(x +a)= f(x) +a$, where $x \in P$, $a \in \dz$, and the action of $\dz$ is the following: $(b +f )(x) = f(x) +b$ for $b \in \dz$.

For any set  $A$  from $\tau(R_{\alpha, \beta})$ (see the definition of $R_{\alpha, \beta}$ in section~\ref{discr_2})  we define a set $A^{\perp, \gamma}$ from  $\tau(R_{\beta^{\perp(\gamma)}, \alpha^{\perp(\gamma)}})$ in the following way:
$$
A^{\perp, \gamma} = \{  \varphi \in \Gamma \; : \; \gamma + \varphi + \phi  \ne 0  \quad \mbox{for any} \quad \phi \in A                               \} \, \mbox{.}
$$

We consider $d_1 \in [R_{\alpha, \beta}]$.
We construct  now $f \in [R_{\beta^{\perp(\gamma)}, \alpha^{\perp(\gamma)}}]^*$    by $d_1$.
For
\linebreak
 ${d_2  \in [R_{\beta^{\perp(\gamma)}, \alpha^{\perp(\gamma)}}]}$  we define
$$
f(d_2)  = d_1(A)  + d_2 (A^{\perp(\gamma)})  \in \dz  \, \mbox{,}
$$
where $A$ is a set from $\tau(R_{\alpha, \beta})$. The integer $f(d_2)$ does not depend on the choice of the set $A$.
The map $d_1 \longmapsto f$ gives isomorphism~\eqref{R-can}, and hence it gives isomorphism~\eqref{ab}, which is compatible with isomorphisms~\eqref{can} for any three elements from $\breve{\Gamma}$.

The fourth item follows from the second item and an explicit description of the isomorphism in the third item.
\end{proof}

\begin{cons} \label{cons-me}
For any $r \in \dc^*$ and  any $\gamma \in \Gamma$,  $\alpha, \beta \in \breve{\Gamma}$ there is a canonical isomorphism of one-dimensional
$\dc$-vector spaces:
$$
\mu_r(\alpha, \beta)  \lrto \mu_r(\alpha^{\perp(\gamma)}, \beta^{\perp(\gamma)})
$$
 which is compatible with isomorphisms~\eqref{can_me} for any three elements from $\breve{\Gamma}$. Moreover, there is a commutative diagram which is analogous to diagram~\ref{diagram} when we change $\dz$-torsors $[ \cdot  , \cdot  ]  $ to corresponding one-dimensional $\dc$-vector spaces $\mu_r(\cdot, \cdot)$.
\end{cons}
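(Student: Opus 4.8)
The plan is to observe that $\mu_r(\alpha,\beta)$ depends on the pair $(\alpha,\beta)$ only through the $\dz$-torsor $[\alpha,\beta]$, via the fixed construction that assigns to a $\dz$-torsor $P$ the one-dimensional $\dc$-vector space $\Phi_r(P) = (P \times^{\dz} \dc^*) \cup \{0\}$, where the homomorphism $\dz \lrto \dc^*$ is $n \longmapsto r^n$. Thus everything reduces to checking that $\Phi_r$ is a functor from the groupoid of $\dz$-torsors to one-dimensional $\dc$-vector spaces, and that it is monoidal. Functoriality is immediate: an isomorphism of $\dz$-torsors $P \lrto Q$ induces an isomorphism $P \times^{\dz} \dc^* \lrto Q \times^{\dz} \dc^*$ of $\dc^*$-torsors, hence, after adjoining zero, an isomorphism $\Phi_r(P) \lrto \Phi_r(Q)$ compatible with composition. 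Monoidality means that there are canonical isomorphisms $\Phi_r(\dz) \simeq \dc$ and $\Phi_r(P \otimes_{\dz} Q) \simeq \Phi_r(P) \otimes_{\dc} \Phi_r(Q)$; but these are exactly the isomorphisms~\eqref{can_me}, built from~\eqref{can} precisely by applying $\Phi_r$.

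With this in place, the corollary follows by transporting the statements of Proposition~\ref{inol-prop} along $\Phi_r$. First I would apply $\Phi_r$ to the canonical isomorphism of $\dz$-torsors~\eqref{ab}, namely $[\alpha,\beta] \lrto [\alpha^{\perp(\gamma)}, \beta^{\perp(\gamma)}]$, to obtain the desired canonical isomorphism
$$\mu_r(\alpha,\beta) \lrto \mu_r(\alpha^{\perp(\gamma)}, \beta^{\perp(\gamma)}).$$
Then, because $\Phi_r$ is monoidal and the isomorphism~\eqref{ab} is compatible (by item~\ref{item-3} of Proposition~\ref{inol-prop}) with the torsor isomorphisms~\eqref{can}, the resulting isomorphism of vector spaces is automatically compatible with~\eqref{can_me}: this is just the assertion that applying a monoidal functor to a commuting triangle of torsor isomorphisms yields a commuting triangle of vector-space isomorphisms.

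Finally, for the commutative diagram analogous to~\eqref{diagram}, I would simply apply the functor $\Phi_r$ to diagram~\eqref{diagram}. Since $\Phi_r$ carries the vertical arrows there (induced by the actions of $\phi$ and $-\phi$ on the torsors) to the corresponding vertical arrows between the $\mu_r$-spaces, and the horizontal arrows to the horizontal isomorphisms just constructed, functoriality turns the commutative square of $\dz$-torsors into a commutative square of one-dimensional $\dc$-vector spaces, as required.

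I expect no genuine obstacle here: the entire content is that $\Phi_r$ is a strong monoidal functor on the groupoid of $\dz$-torsors, after which the corollary is a formal transport of Proposition~\ref{inol-prop}. The only point needing (routine) care is verifying that the monoidal-structure isomorphisms of $\Phi_r$ coincide with~\eqref{can_me}, so that ``compatible with~\eqref{can}'' really does become ``compatible with~\eqref{can_me}''; but this coincidence holds by the very definition of the isomorphisms~\eqref{can_me}.
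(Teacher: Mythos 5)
Your proposal is correct and is essentially the argument the paper intends: the paper states Corollary~\ref{cons-me} without proof, precisely because $\mu_r(\alpha,\beta)$ is by definition $([\alpha,\beta]\times^{\dz}\dc^*)\cup\{0\}$ and the isomorphisms~\eqref{can_me} are by definition the images of~\eqref{can} under this construction, so the corollary is the formal transport of Proposition~\ref{inol-prop} along the monoidal functor you call $\Phi_r$. Your write-up just makes this implicit functoriality explicit, which is exactly the right level of detail.
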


\subsubsection{Fourier transform and two-dimensional local fields}  \label{FT2}

Let $K$ be a two-dimensional local field which is isomorphic to $\df_q((u))((t))$.

We consider a non-zero differential $2$-form $\omega \in \Omega^2_K$.
Then $\omega = g \frac{du}{u} \wedge \frac{dt}{t}$, where $g \in K$. We denote
$$
\ord_K(\omega) = \nu_K(g)  \,  \in   \, \Gamma_K  \, \mbox{.}
$$
We note that $\ord_K(\omega)$ does not depend on the choice of isomorphism $K \simeq \df_q((u))((t))$. Indeed, if we choose another local parameters
$u'$ and $t'$ of the two-dimensional local field $K$, then it is easy to see that
$$
\frac{du}{u} \wedge \frac{dt}{t} = h \frac{du'}{u'} \wedge \frac{dt'}{t'}  \, \mbox{,}
$$
where $h \in (\oo_K')^*$.

We consider a non-degenerate pairing:
\begin{equation}  \label{two-pairing}
\langle  \; \, ,  \; \,   \rangle_{\omega}   \; :  \;
  K \times K \lrto {\mathbb U}(1)  \, \mbox{,}  \quad \langle a, b \rangle_{\omega}  = \psi (\res\nolimits_K (ab \, w)) \, \mbox{,} \quad a,b \in K \, \mbox{,}
\end{equation}
where $\psi : \df_q \lrto \dc^*$ is a fixed non-trivial character, and the residue
$$
\res\nolimits_K \left(\sum_{i,j} a_{i,j} u^it^j du \wedge dt \right) = a_{-1,-1}  \, \in \df_q  \, \mbox{.}
$$
The pairing~\eqref{two-pairing} does not depend on the choice of local parameters $u, t$, since the residue $\res_K$ does not depend on this choice.
(We omit the character $\psi$ here and later in the definition of the two-dimensional Fourier tranforms for~$K$, since we have fixed~$\psi$, see also
section~\ref{dim-1}.)

We note that for any $ \alpha_1 \le \alpha_2 \in \breve{\Gamma}_K$ the pairing~\eqref{two-pairing} induces the Pontryagin duality between locally compact Abelian groups
$$
\oo'(\alpha_1) / \oo'(\alpha_2) \qquad  \mbox{and}  \qquad \oo'(\alpha_2^{\perp(\ord(\omega))})/ \oo'(\alpha_1^{\perp(\ord(\omega))}) \, \mbox{.}
$$
Therefore the Fourier transform gives a linear  map:
$$
 \D(\oo'(\alpha_1) / \oo'(\alpha_2) )  \otimes_{\dc} \mu(\alpha_2, \alpha_1)  \lrto  \D(\oo'(\alpha_2^{\perp(\ord(\omega))}) / \oo'(\alpha_1^{\perp(\ord(\omega))}))  \, \mbox{.}
$$
Hence, from~\eqref{measures}, from Corollary~\ref{cons-me} of Proposition~\ref{inol-prop} and from Remark~\ref{more-gen}   we obtain the composition of canonical isomorphisms for any $\alpha \in \breve{\Gamma}$:
\begin{multline*}
 \D(\oo'(\alpha_1) / \oo'(\alpha_2) )  \otimes_{\dc} \mu(\alpha_2, \alpha) \lrto
\D(\oo'(\alpha_2^{\perp(\ord(\omega))}) / \oo'(\alpha_1^{\perp(\ord(\omega))})) \otimes_{\dc} \mu(\alpha_1, \alpha)  \lrto \\ \lrto
\D(\oo'(\alpha_2^{\perp(\ord(\omega))}) / \oo'(\alpha_1^{\perp(\ord(\omega))})) \otimes_{\dc} \mu(\alpha_1^{\perp(\ord(\omega))}, \alpha^{\perp(\ord(\omega))})  \mbox{.}
\end{multline*}
These maps  are compatible for various $\alpha_1$ and $\alpha_2$ with transition maps in projective limits of formula~\eqref{D}, see more details and proofs in~\cite[\S~5.4]{OsipPar1}. Therefore these maps induce a well-defined two-dimensional Fourier transform for any $\alpha \in \breve{\Gamma}$:
\begin{equation}  \label{FD}
{\bf F}_{\omega}  \; : \; \D_{\alpha}(K)  \lrto \D_{\alpha^{\perp(\ord(\omega))} }(K)  \, \mbox{.}
\end{equation}
On the dual side, we obtain a well-defined  two-dimensional Fourier transform on the space of distributions (we denote this map by the same letter as in the case of the space of functions):
\begin{equation}  \label{FD'}
{\bf F}_{\omega}  \; : \; \D'_{\alpha}(K)  \lrto \D'_{\alpha^{\perp(\ord(\omega))} }(K)  \, \mbox{.}
\end{equation}
The Fourier transforms
$$
{\bf F}_{\omega}  \; : \; \D_{\alpha}(K)  \lrto \D_{\alpha^{\perp(\ord(\omega))} }(K) \qquad \mbox{and} \qquad
{\bf F}_{\omega}  \; : \; \D'_{\alpha^{\perp(\ord(\omega))}}(K)  \lrto \D'_{\alpha }(K)
$$
 are conjugate to each other with respect to the pairing~\eqref{pairing-D}.

From~\cite[Prop.~25]{OsipPar1} we obtain that the maps ${\bf F}_{\omega}$ induce well-defined linear maps for any $\alpha \in \breve{\Gamma}$ (again denoted by the same letter):
\begin{equation}  \label{FO}
{\bf F}_{\omega}  \; : \; \D_{\alpha}(K)_{(\oo'_K)^*}  \lrto \D_{\alpha^{\perp(\ord(\omega))} }(K)_{(\oo'_K)^*}
 \mbox{,} \quad
{\bf F}_{\omega}  \; : \; \D_{\alpha}(K)^{(\oo'_K)^*}  \lrto \D_{\alpha^{\perp(\ord(\omega))} }(K)^{(\oo'_K)^*}    \mbox{.}
\end{equation}
such that the maps
$$
{\bf F}_{\omega}  \; : \; \D_{\alpha}(K)_{(\oo'_K)^*}  \lrto \D_{\alpha^{\perp(\ord(\omega))} }(K)_{(\oo'_K)^*}  \quad \mbox{and}
\quad
{\bf F}_{\omega}  \; : \; \D_{\alpha^{\perp(\ord(\omega))}}(K)^{(\oo'_K)^*}  \lrto \D_{\alpha }(K)^{(\oo'_K)^*}
$$
 are conjugate to each other with respect to the pairing~\eqref{dual_inv_two}.

From the properties of Fourier transforms~\eqref{FD}-\eqref{FD'} (see~\cite[Prop.~24]{OsipPar1})
we have the following property for the both maps from~\eqref{FO}:
$$
{\bf F}_{\omega} \circ {\bf F}_{\omega} = {\rm id}  \, \mbox{.}
$$
(Here the second map ${\bf F}_{\omega}$ is applied to the space of functions or distributions on $K$, and this space depends on the index $\alpha^{\perp(\ord(\omega))}$.)

\begin{example} \em
For any $\alpha \in \breve{\Gamma}_K$, any $\beta \in \breve{\Gamma}_K$, any $b \in \mu(\alpha, \beta)$
we have constructed in section~\ref{charcteristic} a characteristic function $\delta_{\beta,b} \in \D'_{\alpha}(K)$
of a fractional ideal $\oo'(\beta)$.

Now, using the definition of the Fourier transform ${\bf F}_{\omega}$, we obtain
$$
{\bf F}_{\omega} (\delta_{\beta,b}) = \delta_{\beta^{\perp(\ord(\omega))}  ,b}  \, \mbox{,}
$$
where $\delta_{\beta^{\perp(\ord(\omega))}  ,b}  \in \D_{\alpha^{\perp(\ord(\omega) )}}$  is the characteristic function
of the fractional ideal  $\oo'(\beta^{\perp(\ord{\omega)}})$,
and we used the canonical isomorphism from Corollary~\ref{cons-me} of Proposition~\ref{inol-prop}:
$$
\mu(\alpha, \beta)  \lrto \mu(\alpha^{\perp(\ord(\omega))},  \beta^{\perp(\ord(\omega))})  \, \mbox{.}
$$
(Therefore we consider the element $b$ also as an element  from the space  \linebreak $\mu(\alpha^{\perp(\ord(\omega))},  \beta^{\perp(\ord(\omega))})$.)
\end{example}

\subsubsection{Fourier transform and free Abelian groups of rank~$2$}
In this section we define Fourier transform for the spaces of functions and distributions on a group $\Gamma$, which were constructed
in section~\ref{discr_2}, and relate them with  the Fourier transform ${\bf F}_{\omega}$  when $\Gamma = \Gamma_K$, where $K \simeq \df_q((u))((t))$.

Let $\Gamma$ be a group as in the beginning of section~\ref{discr_2} (with the fixed central extension~\eqref{centr}).

We fix $r \in \dc^*$ and $\gamma \in \Gamma$.

For any $k \in \dz $ we consider $\eta_k \in \mu_r(k+1, k) \subset \D_+'(\pi^{-1}(k))$ (see formula~\eqref{measures-2}).
We note   that
\begin{equation}  \label{inv}
(k, -\infty)^{\perp(\gamma)} = (-k - \pi(\gamma)+1, -\infty) \mbox{.}
\end{equation}

We define a linear map between $\dc$-vector spaces:
\begin{gather}  \label{Ft}
{\bf F}_{\eta_k} \; : \; \D_+(\pi^{-1}(k))  \lrto \D_+( \pi^{-1}(-k - \pi(\gamma)) )  \, \mbox{,} \\  \nonumber
{\bf F}_{\eta_k}(\delta_{\ge x})= \eta_k(x) \, \delta_{\ge x^{\perp(\gamma)}} \, \mbox{,}  \quad x \in \pi^{-1}(k)  \mbox{.}
\end{gather}
Here $\eta_k(x)=\eta_k(\delta_{\ge x}) $. The elements $\delta_{\ge x}$, where $x \in \pi^{-1}(k)$,
are the basis of the {$\dc$-vector} space $\D_+(\pi^{-1}(k))$.
Besides, $ x^{\perp(\gamma)}  \in \pi^{-1}(-k - \pi(\gamma))$.

As the dual map to the map~\eqref{Ft} we consider the map (which we will denote by the same letter):
\begin{equation}  \label{Fp}
{\bf F}_{\eta_k} \; : \; \D'_+( \pi^{-1}(-k - \pi(\gamma)) ) \lrto \D'_+(\pi^{-1}(k)) \, \mbox{.}
\end{equation}
Explicitly, this map is given as the following:
\begin{equation}
\label{Fp'}
  {\bf F}_{\eta_k} (\{ a_{x,x}\}) = \{ \tilde{a}_{y,y}\}  \, \mbox{,} \quad \mbox{where} \quad
  x \in  \pi^{-1}(-k - \pi(\gamma)) \, \mbox{,} \; \, y \in \pi^{-1}(k) \, \mbox{,} \;
\,   \tilde{a}_{y,y}= \eta_k(y) \,  a_{y^{\perp(\gamma)}, y^{\perp(\gamma)}} \,  \mbox{.}
\end{equation}

\medskip

We suppose that $\eta_k  \ne 0$. By the isomorphism from Corollary~\ref{cons-me} of Proposition~\ref{inol-prop}  we have also that $$\eta_k \in \mu_r(-k -\pi(\gamma), -k -\pi(\gamma) +1) \, \mbox{.}$$
We introduce an element
\begin{gather}
\eta_k^{-1}  \in \mu_r(-k -\pi(\gamma) +1, -k -\pi(\gamma) )  \label{minus} \\ \nonumber \mbox{uniquely defined by the property} \quad \eta_k \otimes \eta_{k}^{-1} = 1  \in \dc = \mu_r(-k -\pi(\gamma), -k - \pi(\gamma))  \, \mbox{.}
\end{gather}

From construction of  isomorphism~\eqref{ab}, and hence of isomorphism from Corollary~\ref{cons-me} of Proposition~\ref{inol-prop}, we have for both maps from formulas~\eqref{Ft}-\eqref{Fp}:
\begin{equation}  \label{ident}
{\bf F}_{\eta_k^{-1}} \circ {\bf F}_{\eta_k} = {\rm id}  \, \mbox{.}
\end{equation}

\begin{lemma} \label{lem2}
We have the following properties:
$$
{\bf F}_{\eta_k}(\eta_k^{-1})= \delta_{+ \infty}   \qquad \mbox{and}  \qquad
{\bf F}_{\eta_k}(\delta_{+ \infty})= \eta_k  \, \mbox{.}
$$
\end{lemma}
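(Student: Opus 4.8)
The plan is to read off both identities directly from the explicit formula~\eqref{Fp'} for the distribution Fourier transform, using repeatedly that an element of $\D'_+(T)$ is uniquely determined by its diagonal coefficients $\{a_{x,x}\}_{x \in T}$, as noted just before~\eqref{form_pro}.

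First I would establish the second identity ${\bf F}_{\eta_k}(\delta_{+\infty})=\eta_k$, which is essentially immediate. By Proposition~\ref{delta} the element $\delta_{+\infty} \in \D'_+(\pi^{-1}(-k-\pi(\gamma)))$ has all diagonal coefficients equal to $1$, so applying~\eqref{Fp'} gives $\tilde{a}_{y,y}=\eta_k(y)\cdot a_{y^{\perp(\gamma)},y^{\perp(\gamma)}}=\eta_k(y)$ for $y \in \pi^{-1}(k)$. On the other hand, by the embedding~\eqref{measures-2} the distribution $\eta_k \in \mu_r(k+1,k) \subset \D'_+(\pi^{-1}(k))$ has diagonal coefficients $a_{y,y}=\eta_k(y)$. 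Since the diagonal coefficients determine the distribution, this yields ${\bf F}_{\eta_k}(\delta_{+\infty})=\eta_k$. I note that nothing in this argument uses that $\eta_k$ is the distinguished element at the index $k$: it in fact proves ${\bf F}_{\eta}(\delta_{+\infty})=\eta$ for every nonzero $\eta \in \mu_r(m+1,m)$ and every $m \in \dz$.

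For the first identity I would deduce it from the second by duality, rather than computing head-on. The point is that the element $\eta_k^{-1}$ from~\eqref{minus} lies in $\mu_r(j+1,j)$ with $j=-k-\pi(\gamma)$, so it plays the role of the distinguished measure at the index $j$; since $-j-\pi(\gamma)=k$, the distribution transform ${\bf F}_{\eta_k^{-1}}$ goes $\D'_+(\pi^{-1}(k)) \to \D'_+(\pi^{-1}(-k-\pi(\gamma)))$. Applying the second identity at the index $j$ to $\eta_k^{-1}$ yields ${\bf F}_{\eta_k^{-1}}(\delta_{+\infty})=\eta_k^{-1}$, where now $\delta_{+\infty} \in \D'_+(\pi^{-1}(k))$ and $\eta_k^{-1} \in \D'_+(\pi^{-1}(-k-\pi(\gamma)))$. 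Composing with ${\bf F}_{\eta_k}$ and using involutivity in the form ${\bf F}_{\eta_k}\circ {\bf F}_{\eta_k^{-1}}={\rm id}$ then gives ${\bf F}_{\eta_k}(\eta_k^{-1})={\bf F}_{\eta_k}({\bf F}_{\eta_k^{-1}}(\delta_{+\infty}))=\delta_{+\infty}$, as required.

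The main obstacle is the bookkeeping in this last step: one must check that both directions of~\eqref{ident} are available, i.e. that ${\bf F}_{\eta_k}\circ{\bf F}_{\eta_k^{-1}}={\rm id}$ and not only ${\bf F}_{\eta_k^{-1}}\circ{\bf F}_{\eta_k}={\rm id}$, which follows from~\eqref{ident} with the roles of $\eta_k$ and $\eta_k^{-1}$ interchanged together with $(\eta_k^{-1})^{-1}=\eta_k$ under the identifications of Corollary~\ref{cons-me}. Alternatively, one can prove the first identity directly: by~\eqref{Fp'} it reduces to the single scalar identity $\eta_k(y)\,\eta_k^{-1}(y^{\perp(\gamma)})=1$ for all $y \in \pi^{-1}(k)$, which must be extracted from the explicit construction of the isomorphism~\eqref{ab} through~\eqref{R-can} (transporting $\eta_k \in \mu_r(k+1,k)$ to $\mu_r(-k-\pi(\gamma),-k-\pi(\gamma)+1)$) combined with the defining relation $\eta_k\otimes\eta_k^{-1}=1$. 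I expect the duality route to be the shorter one, since it reuses~\eqref{ident} instead of reopening the construction of the involution.
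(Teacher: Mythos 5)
Your proposal is correct and follows essentially the same route as the paper: the second identity is read off directly from the explicit formula~\eqref{Fp'} using that an element of $\D'_+(T)$ is determined by its diagonal coefficients, and the first identity is then deduced from~\eqref{ident}. Your extra care about which composition in~\eqref{ident} is being used (swapping the roles of $\eta_k$ and $\eta_k^{-1}$) is a reasonable elaboration of the paper's terser remark that ``the other will follow,'' but it is the same argument.
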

\begin{proof}
From~\eqref{ident} we have that it is enough to prove only one property: the other will follow. We will prove the second property.
We recall that $\delta_{+ \infty}= \{a_{x,x} \}$ with $a_{x,x}=1$ for any $x \in \pi^{-1}(-k - \pi(\gamma))$. Then from~\eqref{Fp'} we have
$$
{\bf F}_{\eta_k} (\delta_{+ \infty}) = \{\tilde{a}_{y,y}\}  \, \mbox{,} \quad \mbox{where}  \quad
\tilde{a}_{y,y} = \eta_k(y)  \quad \mbox{for any}  \quad y \in \pi^{-1}(k)  \, \mbox{.}
$$
Thus, we obtained ${\bf F}_{\eta_k}(\delta_{+ \infty})= \eta_k$.
\end{proof}

\medskip

Now we give the {\em definitions} of Fourier transforms ${\bf F}_{\gamma}$ for the spaces of functions and distributions on  $\Gamma$
by means of the following proposition (see formulas~\eqref{Ffl}-\eqref{Fdl} below).

\begin{prop} \label{defin-prop-Fourier}
Let $r \in \dc^*$, $\alpha \in \breve{\Gamma}$ and $\gamma \in \Gamma$. We have the following maps.
\begin{enumerate}
\item
A linear map from the $\dc$-vector space $\prod\limits_{k \in \dz} \D_+(\pi^{-1}(k)) \otimes_{\dc} \mu_r(k+1, \alpha)$
to the $\dc$-vector space $\prod\limits_{k \in \dz} \D_+(\pi^{-1}(k)) \otimes_{\dc} \mu_r(k+1, \alpha^{\perp(\gamma)})$ given as
\begin{equation}  \label{Ftr}
\prod_{k \in \dz} f_k \otimes \mu_k  \longmapsto \prod_{k \in \dz} {\bf F}_{\eta_k}(f_k) \otimes \mu_{k-1}  \, \mbox{,}
\end{equation}
where $f_k \in \D_+(\pi^{-1}(k))$, $0 \ne \mu_k \in  \mu_r(k+1, \alpha) $,  $\mu_k = \mu_{k-1} \otimes \eta_k$,
${\eta_k \in \mu_r(k+1, k)}$, and also $\mu_{k-1} \in \mu_r(k, \alpha) = \mu_r (-k -\pi(\gamma) +1, \alpha^{\perp(\gamma)}) $ (see~\eqref{inv} and~Corollary~\ref{cons-me} of Proposition~\ref{inol-prop}),
induces a well-defined linear map (Fourier transform) between $\dc$-vector subspaces:
\begin{equation} \label{Ffl}
{\bf F}_{\gamma} \; : \; \D_{+, \alpha, r}(\Gamma)  \lrto \D_{+, \alpha^{\perp(\gamma)}, r}(\Gamma)  \, \mbox{.}
\end{equation}
\item
A  linear map from the $\dc$-vector space
$\bigoplus\limits_{k \in \dz} \D'_+(\pi^{-1}(k)) \otimes_{\dc} \mu_r(\alpha, k+1)$
to the $\dc$-vector space
$
\bigoplus\limits_{k \in \dz} \D'_+(\pi^{-1}(k)) \otimes_{\dc} \mu_r(\alpha^{\perp(\gamma)}, k+1)
$
given as
$$
\bigoplus_{k \in \dz} f_{-k - \pi(\gamma)} \otimes \mu_{-k - \pi(\gamma)}
\longmapsto \bigoplus_{k \in \dz}
{\bf F}_{\eta_k} (f_{-k - \pi(\gamma)}) \otimes \mu_{-k - \pi(\gamma) -1}  \, \mbox{,}
$$
where $f_{-k - \pi(\gamma)}  \in  \D'_+(\pi^{-1}(-k - \pi(\gamma))) $,
\begin{gather*}
0 \ne \mu_{-k - \pi(\gamma)}  \in \mu_r(\alpha, -k - \pi(\gamma) +1) = \mu_r(\alpha^{\perp(\gamma)}, k) \, \mbox{,}\\
0 \ne \mu_{-k - \pi(\gamma) -1}  \in \mu_r(\alpha, -k - \pi(\gamma)) = \mu_r(\alpha^{\perp(\gamma)}, k+1) \, \mbox{,}
\end{gather*}
$\eta_k \in \mu_r(k+1, k)$
such that $\mu_{-k - \pi(\gamma) -1} \otimes \eta_k = \mu_{-k - \pi(\gamma)}$,
induces a well-defined linear map (Fourier transform) between $\dc$-vector quotient spaces:
\begin{equation}  \label{Fdl}
{\bf F}_{\gamma} \; : \; \D'_{+, \alpha, r}(\Gamma)  \lrto \D'_{+, \alpha^{\perp(\gamma)}, r}(\Gamma)  \, \mbox{.}
\end{equation}
\end{enumerate}
\end{prop}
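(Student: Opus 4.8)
The plan is to establish the function-space statement \eqref{Ffl} first and then obtain the distribution-space statement \eqref{Fdl} by transposing the same computation. For \eqref{Ffl} I would argue in two stages: (i) that formula \eqref{Ftr} defines a linear map of the ambient products that is independent of the chosen tensor factorizations, and (ii) that it carries the relations cutting out $\D_{+,\alpha,r}(\Gamma)$ to those cutting out $\D_{+,\alpha^{\perp(\gamma)},r}(\Gamma)$. Throughout I use \eqref{inv} together with Corollary~\ref{cons-me} to see that, after the reindexing $k \mapsto -k-\pi(\gamma)$, the factor ${\bf F}_{\eta_k}(f_k)\otimes_{\dc}\mu_{k-1}$ lands in $\D_+(\pi^{-1}(-k-\pi(\gamma)))\otimes_{\dc}\mu_r(-k-\pi(\gamma)+1,\alpha^{\perp(\gamma)})$, which is the correct target factor.

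For step (i), a factorization of the $k$-th factor can be rescaled as $f_k\otimes\mu_k = (c^{-1}f_k)\otimes(c\mu_k)$ with $c\in\dc^*$; this multiplies $\eta_k=\mu_k/\mu_{k-1}$ by $c$ and $\eta_{k+1}=\mu_{k+1}/\mu_k$ by $c^{-1}$, while scaling the measure $\mu_k$ appearing as the $(k+1)$-output decoration by $c$. Since ${\bf F}_{\eta_k}(\delta_{\ge x})=\eta_k(x)\,\delta_{\ge x^{\perp(\gamma)}}$ is linear in $f_k$ and homogeneous of degree one in the subscript $\eta_k$ (see \eqref{Ft}), both affected output factors, those at indices $k$ and $k+1$, are unchanged; hence the map is well defined on the ambient product.

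The core of step (ii) is preservation of the defining relation. Writing the image as a sequence $g_j={\bf F}_{\eta_{-j-\pi(\gamma)}}(f_{-j-\pi(\gamma)})$ with output decorations whose comparison element I denote $\eta^{\mathrm{out}}_{j+1}$, I must check $\delta_{+\infty}(g_j)=\eta^{\mathrm{out}}_{j+1}(g_{j+1})$. Set $k=-j-\pi(\gamma)$. Because the function and distribution transforms ${\bf F}_{\eta_k}$ of \eqref{Ft}--\eqref{Fp} are mutually adjoint, the first side is $\delta_{+\infty}({\bf F}_{\eta_k}(f_k))=({\bf F}_{\eta_k}(\delta_{+\infty}))(f_k)=\eta_k(f_k)$ by Lemma~\ref{lem2}. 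A short bookkeeping argument, using Corollary~\ref{cons-me} and its compatibility with the tensor structure \eqref{can_me}, identifies $\eta^{\mathrm{out}}_{j+1}$ with $\eta_{k-1}^{-1}$ from \eqref{minus}, whence the second side is $\eta^{\mathrm{out}}_{j+1}({\bf F}_{\eta_{k-1}}(f_{k-1}))=({\bf F}_{\eta_{k-1}}(\eta_{k-1}^{-1}))(f_{k-1})=\delta_{+\infty}(f_{k-1})$, again by adjointness and the other half of Lemma~\ref{lem2}. The defining relation of $\D_{+,\alpha,r}(\Gamma)$ at index $k-1$ reads $\delta_{+\infty}(f_{k-1})=\eta_k(f_k)$, so the two computed scalars coincide and the image lies in $\D_{+,\alpha^{\perp(\gamma)},r}(\Gamma)$, proving \eqref{Ffl}.

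For \eqref{Fdl} the cleanest route is duality. The distribution pieces ${\bf F}_{\eta_k}$ of \eqref{Fp} are by definition the adjoints of the function pieces, and both transforms use the same index involution $k\mapsto -k-\pi(\gamma)$; hence the ambient distribution map is adjoint to the ambient function map relative to the pairings of the kind \eqref{pair2}. Since the function transform preserves the subspace $\D_{+,\alpha,r}(\Gamma)$, its adjoint automatically respects the quotient and descends to $\D'_{+,\alpha,r}(\Gamma)\to\D'_{+,\alpha^{\perp(\gamma)},r}(\Gamma)$. Equivalently, one checks directly that the ambient map sends each generator $\delta_{+\infty}\otimes\mu_k-\eta_{k+1}\otimes\lambda_{k+1}$ of the subspace $Y$ into the analogous subspace for $\alpha^{\perp(\gamma)}$, using ${\bf F}_{\eta_{-k-\pi(\gamma)}}(\delta_{+\infty})=\eta_{-k-\pi(\gamma)}$ and ${\bf F}_{\eta_{-k-\pi(\gamma)-1}}(\eta_{k+1})=\delta_{+\infty}$ from Lemma~\ref{lem2}; this is precisely the transpose of the relation verified above. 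I expect the main obstacle to be exactly this index bookkeeping: keeping straight the involution $k\mapsto -k-\pi(\gamma)$ and confirming that every one-dimensional space $\mu_r(\cdot,\cdot)$ in the output matches its claimed target under the isomorphisms of Corollary~\ref{cons-me}, so that the element $\eta^{\mathrm{out}}_{j+1}$ governing the output condition is the very $\eta^{-1}$ to which Lemma~\ref{lem2} applies.
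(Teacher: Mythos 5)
Your argument is correct and follows essentially the same route as the paper: the heart of the matter is checking that the defining relation between neighbouring factors is preserved, which you reduce via adjointness of the function and distribution versions of ${\bf F}_{\eta_k}$ to the two identities of Lemma~\ref{lem2}, exactly as the paper does (your index bookkeeping, with $\eta^{\mathrm{out}}_{j+1}$ identified with $\eta_{k-1}^{-1}$ of~\eqref{minus}, matches the paper's computation up to a shift of $k$ by one). Your additional step (i) on independence of the tensor factorization and your duality remark for~\eqref{Fdl} are correct elaborations of points the paper leaves implicit.
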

\begin{proof}
We prove the first statement (the second is proved similarly).
We note that after the application of map~\eqref{Ftr} ''the neighbour elements'' in the infinite product will be
${\bf F}_{\eta_{k+1}}(f_{k+1}) \otimes \mu_k$ and ${\bf F}_{\eta_k}(f_k) \otimes \mu_{k-1}$. Therefore, according to definition~\ref{D-space}  we have to check that the condition $\eta_{k+1}(f_{k+1})= \delta_{+\infty}(f_{k})$ for any $k \in \dz$ implies the condition
$$
\eta_k^{-1}( {\bf F}_{\eta_k}(f_k) )  = \delta_{+ \infty}({\bf F}_{\eta_{k+1}}(f_{k+1}))  \, \mbox{,}
$$
where we consider $\eta_k^{-1}  \in \mu_r(-k -\pi(\gamma) +1, -k -\pi(\gamma) )$ (see~\eqref{minus}).
We have
\begin{gather*}
\eta_k^{-1}( {\bf F}_{\eta_k}(f_k) ) =  {\bf F}_{\eta_k}(\eta_k^{-1})(f_k) = \delta_{+ \infty}(f_k)  \, \mbox{,}\\
 \delta_{+ \infty}({\bf F}_{\eta_{k+1}}(f_{k+1})) = {\bf F}_{\eta_{k+1}} (\delta_{+ \infty})(f_{k+1}) = \eta_{k+1}(f_{k+1}) = \delta_{+\infty}(f_k)
 \, \mbox{,}
\end{gather*}
where we used Lemma~\ref{lem2}.
\end{proof}

\medskip

We have a theorem.

\begin{Th}Let $r \in \dc^*$, $\alpha \in \breve{\Gamma}$ and $\gamma \in \Gamma$. We have the following properties.
\begin{enumerate}
\item The Fourier tranforms
$$
{\bf F}_{\gamma} \; : \; \D_{+, \alpha, r}(\Gamma)  \lrto \D_{+, \alpha^{\perp(\gamma)}, r}(\Gamma) \qquad \mbox{and} \qquad
{\bf F}_{\gamma} \; : \; \D'_{+, \alpha^{\perp(\gamma)}, r}(\Gamma)  \lrto \D'_{+, \alpha, r}(\Gamma)
$$
are conjugate to each other with respect to the pairing from Proposition~\ref{p}.
\item For  both Fourier tranforms~\eqref{Ffl} and~\eqref{Fdl}
we have
$${\bf F}_{\gamma} \circ {\bf F}_{\gamma} = {\rm id}  \, \mbox{.} $$
(Here the second map ${\bf F}_{\gamma}$ is applied to the space of functions or distributions on $\Gamma$ which depends on the index $\alpha^{\perp(\gamma)}$.)
\item Let $\Gamma = \Gamma_K$, where $K \simeq \df_q((u))((t))$. Let $0 \ne \omega  \in \Omega^2_{K}$,  $\gamma = \ord_K(\omega)$.
The following diagrams are commutative:
$$
\xymatrix{
\D_{\alpha}(K)_{(\oo_K')^*}  \ar[r]^(0.46){{\bf F}_{\omega}}
\ar@{->>}[d] &
\D_{\alpha^{\perp(\gamma)}}(K)_{(\oo_K')^*} \ar@{->>}[d] \\
\D_{+,\alpha, q}(\Gamma_K)   \ar[r]^{{\bf F}_{\gamma}}
 &
\D_{+,\alpha^{\perp(\gamma)}, q}(\Gamma_K)
}
\qquad
\qquad
\xymatrix{
\D'_{+,\alpha, q}(\Gamma_K)  \ar[r]^{{\bf F}_{\gamma}}
\ar@{^{(}->}[d] &
\D'_{+,\alpha^{\perp(\gamma)}, q}(\Gamma_K) \ar@{^{(}->}[d] \\
\D'_{\alpha}(K)^{(\oo_K')^*}
\ar[r]^(0.46){{\bf F}_{\omega}} &
\D'_{\alpha^{\perp(\gamma)}}(K)^{(\oo_K')^*}
}
$$
where the vertical arrows are the natural maps from items~\ref{it1}  and~\ref{it2} of Theorem~\ref{Theorem}.
\end{enumerate}
\end{Th}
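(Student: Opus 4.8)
The plan is to treat the three assertions in turn, reducing each to facts already established for the one-dimensional maps ${\bf F}_{\eta_k}$. Throughout I abbreviate $j = -k - \pi(\gamma)$, so that by~\eqref{inv} the reflection $(k,-\infty)^{\perp(\gamma)}$ carries the slot $k$ to the slot $j$ and is an involution on indices.

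\emph{First statement (conjugacy).} By Proposition~\ref{p} the pairing~\eqref{pair2} is the restriction of the ambient pairing~\eqref{pair1}, which is the orthogonal sum over $k$ of the componentwise pairings, and both transforms~\eqref{Ffl} and~\eqref{Fdl} are induced by the ambient maps of Proposition~\ref{defin-prop-Fourier}. I would therefore check conjugacy already at the ambient level, slot by slot. For $F = \prod_k f_k \otimes \mu_k$ and $G = \bigoplus_m g_m \otimes \nu_m$, the slot of ${\bf F}_{\gamma} F$ over $j$ is ${\bf F}_{\eta_k}(f_k)\otimes\mu_{k-1}$ while the slot of ${\bf F}_{\gamma} G$ over $k$ is ${\bf F}_{\eta_k}(g_j)\otimes\nu_{j-1}$. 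Since by its very definition~\eqref{Fp} the distribution map ${\bf F}_{\eta_k}$ is the transpose of the function map~\eqref{Ft}, the two function–distribution contractions $g_j({\bf F}_{\eta_k}(f_k))$ and $({\bf F}_{\eta_k}(g_j))(f_k)$ coincide; and the two measure contractions agree because $\mu_k = \mu_{k-1}\otimes\eta_k$ and $\nu_j = \nu_{j-1}\otimes\eta_k$, together with the compatibility of the involution isomorphism of Corollary~\ref{cons-me} with the contractions~\eqref{can_me}. Summing over $k$ yields the conjugacy, and the distribution–function direction is identical.

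\emph{Second statement (${\bf F}_{\gamma}\circ{\bf F}_{\gamma} = {\rm id}$).} Applying ${\bf F}_{\gamma}$ twice returns the slot $k$ to itself, since the involution sends $j$ back to $k$; so it suffices to identify the composition of the two component maps. A short bookkeeping of the measure sequences shows that the auxiliary element used in the second application over the slot $j$ is exactly $\eta_k^{-1}$, viewed in $\mu_r(j{+}1,j)$ via Corollary~\ref{cons-me}, and that the surviving measure factor is $\mu_k$. Hence the component composition is ${\bf F}_{\eta_k^{-1}}\circ{\bf F}_{\eta_k}$, which equals the identity by~\eqref{ident} (itself a consequence of Lemma~\ref{lem2}); this produces $f_k\otimes\mu_k$ back in slot $k$. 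The distribution case is the same.

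\emph{Third statement (commutative diagrams).} Here I would first reduce, as in the proof of Theorem~\ref{Theorem}, to a fixed finite level $\oo(-m)/\oo(m)$, where both rows are assembled from the graded pieces $\oo(k)/\oo(k+1)$ through the map $\Phi$ of~\eqref{Phi}. The key point is that the two-dimensional Fourier transform ${\bf F}_{\omega}$ is built (see~\cite[\S5.4]{OsipPar1}) from the Pontryagin dualities induced by~\eqref{two-pairing} between the subquotients $\oo'(\alpha_1)/\oo'(\alpha_2)$ and their reflections $\oo'(\alpha_2^{\perp(\gamma)})/\oo'(\alpha_1^{\perp(\gamma)})$, with $\gamma = \ord_K(\omega)$; restricted to the graded piece in slot $k$ this is a one-dimensional Fourier transform carrying it to the graded piece in slot $j$. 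Passing to $(\oo_K')^*$-coinvariants and using the identification $\D(\oo(k)/\oo(k+1))_{(\oo_K')^*}\simeq\D_+(T(k))$ of Proposition~\ref{prop_dual}, I would check that this graded transform coincides with ${\bf F}_{\eta_k}$: on the basis $\delta_{\ge x}$, which corresponds to characteristic functions of fractional ideals, both sides send $\delta_{\ge x}$ to $\eta_k(x)\,\delta_{\ge x^{\perp(\gamma)}}$ by the explicit one-dimensional formula of section~\ref{dim-1} and by~\eqref{Ft}, where $\eta_k$ is the measure attached to $\omega$ under Remark~\ref{more-gen}. Commutativity of both diagrams then follows, the distribution diagram being dual to the function one; alternatively one may verify it directly on the generators $\delta_{\beta,b}$, for which ${\bf F}_{\omega}(\delta_{\beta,b}) = \delta_{\beta^{\perp(\gamma)},b}$ matches the index-flip performed by ${\bf F}_{\gamma}$.

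\emph{Main obstacle.} The first two items are essentially formal once the reflection $k\mapsto j$ and the relations $\eta_k\otimes\eta_k^{-1}=1$ are tracked. The substance lies in the third: one must verify that ${\bf F}_{\omega}$ is genuinely compatible with the graded decomposition $\Phi$, and that the involution isomorphism of Corollary~\ref{cons-me} is matched, level by level, with the Pontryagin duality coming from the residue pairing~\eqref{two-pairing}, so that the measure $\eta_k$ produced on each slot is precisely the one dictated by $\omega$.
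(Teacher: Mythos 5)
Your proposal is correct and follows essentially the same route as the paper's (much terser) proof: item 1 by slot-by-slot transposition of the componentwise maps ${\bf F}_{\eta_k}$ together with the measure bookkeeping $\mu_k=\mu_{k-1}\otimes\eta_k$, item 2 by reduction to the relation ${\bf F}_{\eta_k^{-1}}\circ{\bf F}_{\eta_k}={\rm id}$ of~\eqref{ident}, and item 3 via the commutative diagram identifying the graded one-dimensional Fourier transform on $\D(\oo(k)/\oo(k+1))_{(\oo_K')^*}$ with ${\bf F}_{\eta_k}$ on $\D_+(\pi^{-1}(k))$. The only difference is that you spell out the verifications the paper leaves implicit.
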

\begin{proof}
The first item follows from the constructions of the Fourier transforms ${\bf F}_{\gamma}$ (see also~\eqref{Ft}-\eqref{Fp}) and the pairing from Proposition~\ref{p}.

The second item follows from the construction of the Fourier transforms ${\bf F}_{\gamma}$ and the property~\eqref{ident}.

The third item follows from the definition of the Fourier transforms ${\bf F}_{\omega}$ given in section~\ref{FT2}, the construction of maps from
items~\ref{it1}  and~\ref{it2} of Theorem~\ref{Theorem}, the commutative diagram
\begin{equation}  \label{comm-diagr}
\xymatrix{
\D(\oo(k)/ \oo(k+1))_{(\oo_K')^*}  \ar[r]^(.39){{\bf F}_{\eta_k}}
\ar[d] &
\D(\oo(-k - \pi(\gamma))/ \oo(-k -\pi(\gamma)+1))_{(\oo_K')^*}
\ar[d]
\\
\D_{+}(\pi^{-1}(k))   \ar[r]^{{\bf F}_{\eta_k}}
 &
\D_{+}(\pi^{-1}(-k - \pi(\gamma)))
}
\end{equation}
where $k \in \dz$, $\eta_k \in \mu_q(k+1, k) = \mu(\oo(k)/ \oo(k+1))$, the horizontal arrows are the corresponding Fourier transforms,  and all the arrows are isomorphisms, and the commutative diagram which consists of dual maps to the maps in diagram~\eqref{comm-diagr}
$$
\xymatrix{
\D'(\oo(k)/ \oo(k+1))^{(\oo_K')^*}
 &
\D'(\oo(-k - \pi(\gamma))/ \oo(-k -\pi(\gamma)+1))^{(\oo_K')^*} \ar[l]_(0.59){{\bf F}_{\eta_k}}
\\
\D'_{+}(\pi^{-1}(k))  \ar[u]
 &
\D'_{+}(\pi^{-1}(-k - \pi(\gamma)))   \ar[u]  \ar[l]_{{\bf F}_{\eta_k}}
}
$$
\end{proof}

\section{Action of the discrete Heisenberg group}  \label{Hei}

\subsection{Discrete Heisenberg group}
 \subsubsection{Various definitions of the discrete Heisenberg group}

Let $\Gamma$ be a group as in the beginning of section~\ref{discr_2} (with the fixed central extension~\eqref{centr}).

From the set $\breve{\Gamma}$ we obtain a connected groupoid, where the objects of this groupoid  are elements of $\breve{\Gamma}$, and for any
$\alpha_1, \alpha_2 \in \breve{\Gamma} $ the set of morphisms between $\alpha_1$ and $\alpha_2$ is a $\dz$-torsor $[\alpha_1, \alpha_2]$. The group $\Gamma$ acts on the set $\breve{\Gamma}$ (see section~\ref{involu}) and on this groupoid in the natural way. Therefore for any $\alpha \in \breve{\Gamma}$ we have a central extension of  $\Gamma$ (see, e.g,~\cite{Br}):
\begin{equation}  \label{centr_ext}
0 \lrto \dz \lrto \tilde{\Gamma}_{\alpha}  \lrto \Gamma  \lrto 0  \, \mbox{.}
\end{equation}
The concrete description of the group $\tilde{\Gamma}_{\alpha} $
is the following:
$$
\tilde{\Gamma}_{\alpha} = \left\{  (\beta, h)  \mid \beta \in \Gamma \, \mbox{,} \; h \in [\alpha, \beta \circ \alpha]    \right\}  \, \mbox{.}
$$
And the group law in $\tilde{\Gamma}_{\alpha} $ is given as:
$$
(\beta_1, h_1) (\beta_2, h_2) = (\beta_1 + \beta_2, h_1 \otimes \beta_1(h_2))  \, \mbox{,}
$$
where   $\beta_1(h_2) \in [\beta_1 \circ \alpha, (\beta_1 + \beta_2) \circ \alpha]$ comes from the natural isomorphism of $\dz$-torsors:
  $$
  [\alpha, \beta_2 \circ \alpha]  \lrto [\beta_1 \circ \alpha, (\beta_1 + \beta_2) \circ \alpha] \quad \mbox{given by the action of} \quad \beta_1 \, \mbox{,}
  $$
  and
  multiplication $\otimes$ is given by~\eqref{can}.

\begin{defin}
The discrete Heisenberg group ${\rm Heis}(3, \dz)$
is the group of matrices of the form
$$
\begin{pmatrix}
1 & a & c \\
0 & 1 & b \\
0 & 0 & 1
\end{pmatrix}
$$
where $a,b,c \in \dz$.
\end{defin}

We can think on ${\rm Heis}(3, \dz)$ as the group of triples $(a,b,c)$, where $a,b,c \in  \dz$, with the group law
$$
(a_1, b_1, c_1)(a_2, b_2, c_2)= (a_1 + a_2, b_1 + b_2, c_1 + c_2 + a_1 b_2)
$$

The group ${\rm Heis}(3, \dz)$ has two generators $x = (1,0,0)$ and $y =(0,1,0)$ and  relations
$$
z = xyx^{-1}y^{-1} \, \mbox{,} \quad  xz=zx \, \mbox{,} \quad yz=zy  \, \mbox{,}
$$
where $z = (0,0,1)$ is the generator of the center of ${\rm Heis}(3, \dz)$.
The discrete Heisenberg group is a non-abelian nilpotent group of nilpotence class $2$.

\medskip

We recall that for any $g \in [\alpha_1, \alpha_2]$, where $\alpha_1, \alpha_2 \in \breve{\Gamma}$, we have a uniquely defined element
$g^{-1} \in [\alpha_2, \alpha_1]$ such that $g \otimes g^{-1} =0 \in \dz$.

\begin{prop}  \label{prop-Heis}
We have the following properties.
\begin{enumerate}
\item  \label{item1} For any $\alpha_1, \alpha_2 \in \breve{\Gamma}$ there is a canonical isomorphism of groups (which is also an isomorphism of the corresponding central extensions):
$$\Lambda_{\alpha_2, \alpha_1} \; \, : \; \, \tilde{\Gamma}_{\alpha_1}   \lrto \tilde{\Gamma}_{\alpha_2} \quad \mbox{given as} \quad (\beta, h)   \longmapsto (\beta, g \otimes h \otimes \beta(g^{-1}))  \, \mbox{,}$$
where $g \in [\alpha_2, \alpha_1]$. This isomorphism does not depend on the choice of $g$.
\item
\label{iso-Heis}
The fixing of section of the homomorphism $\pi$ from~\eqref{centr}  gives the isomorphism ${\Gamma \simeq \dz \oplus \dz}$ (where the first group $\dz$  corresponds to $\pi(\Gamma)$), which defines an isomorphism
$$
{\rm Heis}(3, \dz)  \lrto
\tilde{\Gamma}_{(0, -\infty)} \quad \mbox{given as} \quad
(a,b,c) \longmapsto (( b \oplus a ), -c + d_{b})  \, \mbox{,}
$$
where $d_b \in [(0, -\infty), (b, -\infty) ]$ is uniquely defined by the following conditions :
\begin{gather*}
d_b \left( \bigcup_{b \le n \le -1}  \left(n \oplus \dz_{\ge 0} \right) \right) =0  \quad  \mbox{if}  \quad     b < 0 \, \mbox{,}\\
\mbox{and} \qquad d_b \otimes (d_{-b}) = 0  \quad  \mbox{if}  \quad  b   \ge 0  \quad \mbox{(compare with~\eqref{dz})} \mbox{.}
\end{gather*}
\end{enumerate}
\end{prop}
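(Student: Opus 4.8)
The plan is first to check that $\Lambda_{\alpha_2,\alpha_1}$ is well-typed: for $g\in[\alpha_2,\alpha_1]$ and $h\in[\alpha_1,\beta\circ\alpha_1]$ the element $\beta(g^{-1})$ lies in $[\beta\circ\alpha_1,\beta\circ\alpha_2]$, so $g\otimes h\otimes\beta(g^{-1})\in[\alpha_2,\beta\circ\alpha_2]$, which is exactly the torsor required for a point of $\tilde{\Gamma}_{\alpha_2}$ over $\beta$. Independence of $g$ is then immediate: any other choice is $g'=n+g$ with $n\in\dz$, whence $(g')^{-1}=g^{-1}-n$ and $\beta((g')^{-1})=\beta(g^{-1})-n$ (the $\Gamma$-action is by torsor isomorphisms, hence commutes with the $\dz$-action), so the two $\dz$-shifts cancel. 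For the homomorphism property I would expand the group law of $\tilde{\Gamma}_{\alpha_2}$ on $\Lambda(\beta_1,h_1)\Lambda(\beta_2,h_2)$; the whole computation collapses to the single cancellation $\beta_1(g^{-1})\otimes\beta_1(g)=\beta_1(g^{-1}\otimes g)=0$, which turns the second coordinate into $g\otimes h_1\otimes\beta_1(h_2)\otimes(\beta_1+\beta_2)(g^{-1})$, i.e.\ exactly $\Lambda_{\alpha_2,\alpha_1}$ of the product $(\beta_1,h_1)(\beta_2,h_2)$. The same bookkeeping gives $\Lambda_{\alpha_3,\alpha_2}\circ\Lambda_{\alpha_2,\alpha_1}=\Lambda_{\alpha_3,\alpha_1}$ (using $\beta(g_{21}^{-1})\otimes\beta(g_{32}^{-1})=\beta((g_{32}\otimes g_{21})^{-1})$ and independence of $g$), so $\Lambda_{\alpha_1,\alpha_2}$ is a two-sided inverse; finally on the centre $\dz=\{(0,h)\}$ one has $0(g^{-1})=g^{-1}$, so $h\mapsto g\otimes h\otimes g^{-1}=h$, and the map covers the identity on $\Gamma$, giving an isomorphism of central extensions.

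\textbf{Part 2, setup.} The key observation is that, for the ``zero section'' $Z=\dz\oplus\dz_{\ge 0}\subset\Gamma$ (the elements of second coordinate $\ge 0$), the element $d_b$ of the statement coincides with $d_{Z,(0,-\infty),(b,-\infty)}$ of~\eqref{dz}. Indeed, for $b<0$ one has $R_{(0,-\infty),(b,-\infty)}=\bigcup_{b\le n\le -1}\pi^{-1}(n)$ and $Z\cap R=\bigcup_{b\le n\le -1}(n\oplus\dz_{\ge 0})$, which is precisely the stated normalisation; for $b\ge 0$ the duality clause of~\eqref{dz}, together with $(b\oplus 0)\cdot Z=Z$ (as $(b\oplus 0)$ has trivial second coordinate and merely shifts the fibre range $[-b,0)$ to $[0,b)$), reproduces the condition $d_b\otimes d_{-b}=0$; and $d_0=0\in[(0,-\infty),(0,-\infty)]=\dz$. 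Thus $d_b$ is well defined for every $b$, and the map $(a,b,c)\mapsto((b\oplus a),-c+d_b)$ is a set bijection: the first coordinate recovers $(a,b)$, and since any point over $(b\oplus a)$ differs from $d_b$ by an integer, $-c$ is determined. It remains to prove it is a homomorphism. Writing $\beta_1=(b_1\oplus a_1)$ and expanding both sides through the group law of $\tilde{\Gamma}_{(0,-\infty)}$, the first coordinates match automatically, and (pulling out the $-c_i$ using that $\beta_1$ commutes with $\dz$-shifts, and reading off the Heisenberg term $a_1b_2$) the entire statement reduces to the single torsor identity
$$
d_{b_1}\otimes\beta_1(d_{b_2})=-a_1b_2+d_{b_1+b_2}\,,
$$
where $-a_1b_2$ denotes the corresponding $\dz$-shift.

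\textbf{Part 2, the cocycle computation.} To evaluate the left-hand side I would invoke naturality of the zero-section construction under the $\Gamma$-action, $\beta_1(d_{Z,\mu,\nu})=d_{\beta_1 Z,\beta_1\mu,\beta_1\nu}$; since $\pi(\beta_1)=b_1$ this yields $\beta_1(d_{b_2})=d_{\beta_1 Z,(b_1,-\infty),(b_1+b_2,-\infty)}$ with $\beta_1 Z=\dz\oplus\dz_{\ge a_1}$. Hence $d_{b_1}\otimes\beta_1(d_{b_2})$ is the element of $[(0,-\infty),(b_1+b_2,-\infty)]$ normalised to vanish on $Z$ over the fibres $[0,b_1)$ and on $\beta_1 Z$ over the fibres $[b_1,b_1+b_2)$, whereas $d_{b_1+b_2}$ vanishes on $Z$ over all of $[0,b_1+b_2)$. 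Over $[0,b_1)$ the two agree, while over each fibre of $[b_1,b_1+b_2)$ the discrepancy is the difference between the reference half-lines $\dz_{\ge a_1}$ and $\dz_{\ge 0}$, which by the explicit single-fibre description of $[T]$ (the Example in Section~\ref{discr_2}) equals $a_1$; summing over the $b_2$ fibres produces the integer $-a_1b_2$, as required. I expect the main obstacle to be exactly this last step: keeping the signs and the orientation of the dual torsors straight across the cases $b_1,b_2\gtrless 0$ (where $[\,\cdot\,,\cdot\,]$ is replaced by its dual), so that the fibre count $b_2$ and the per-fibre shift $a_1$ combine with the correct overall sign. The case $b_1=0$, where $\beta_1\in\ker\pi$, is a clean check that pins down this sign, and the general case follows by the same fibrewise accounting together with the compatibility~\eqref{can} of the composition isomorphisms.
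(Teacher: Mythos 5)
Your proposal is correct and follows essentially the same route as the paper: both reduce item 2 to the single cocycle identity $d_{b_1}\otimes\beta_1(d_{b_2})=-a_1b_2+d_{b_1+b_2}$, obtained by splitting off the $\ker\pi$-part of $\beta_1$ and computing its shift on the torsors $[n_1,n_2]$. Your fibrewise count of the discrepancy between the half-line normalisations $\dz_{\ge 0}$ and $\dz_{\ge a_1}$ is precisely the verification of the identity $(0\oplus a_1)(d_{b_2})=-a_1b_2+d_{b_2}$ that the paper states without proof, and your reading of $d_b$ as $d_{Z,(0,-\infty),(b,-\infty)}$ for $Z=\dz\oplus\dz_{\ge 0}$ is consistent with~\eqref{dz}.
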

\begin{proof}
The first item follows directly from definitions of groups~$\tilde{\Gamma}_{\alpha_1}$ and~$\tilde{\Gamma}_{\alpha_2}$.

For the proof of the second item we note that for any $a_1, b_1, a_2, b_2, c_1, c_2 \in \dz$ we have
$$
(0 \oplus a_1) (d_{b_2}) = -a_1 b_2 + d_{b_2}  \, \mbox{,}
$$
and hence
\begin{gather*}
d_{b_1} \otimes (b_1 \oplus a_1)(d_{b_2})= -a_1 b_2 + d_{b_1 + b_2}  \, \mbox{,}\\
( -c_1 + d_{b_1}) \otimes (b_1 \oplus a_1)( -c_2 + d_{b_2})= -c_1 -c_2  -a_1 b_2 + d_{b_1 + b_2}
\, \mbox{.}
\end{gather*}
Using the last formula, we apply direct computations with group laws  for elements $(a_1,b_1,c_1)$ and $(a_2, b_2, c_2)$ from ${\rm Heis}(3, \dz)$.
\end{proof}

\begin{nt} \em
The isomorphism constructed in item~\ref{iso-Heis} of Proposition~\ref{prop-Heis} gives the following commutative diagram:
$$
\xymatrix{
{\rm Heis}(3, \dz)
  \ar[rr]
\ar[d] & & \tilde{\Gamma}_{(0, -\infty)}
\ar[d]
\\
\dz \oplus \dz  \ar[rr]^{(a  \oplus b)  \longmapsto (b \oplus a)}
 & &
\dz \oplus \dz
}
$$
where the vertical arrows are the natural maps which are the quotient maps by the center of a group (we have also fixed a section of the homomorphism $\pi$ and, thus, we fixed an isomorphism $\Gamma \simeq \dz \oplus \dz$.)

The flip $(a \oplus b)  \longmapsto (b \oplus a)$
in the map which is the bottom arrow of the diagram can be changed to the identity map (and then there is an isomorphism given by  the upper arrow which gives  isomorphism of corresponding central extensions) if we change all $\dz$-torsors $[\alpha_1, \alpha_2]$, where $\alpha_1, \alpha_2 \in \breve{\Gamma}$, to the dual $\dz$-torsors, because a central extension~\eqref{centr_ext}  will change under the last change to a central extension given by the opposite (i.e. taken with the sign minus) $2$-cocycle.

Indeed,  by the universal coefficient theorem (compare also with~\cite{Bre}), for any Abelian groups $A$ and $B$ we have an exact sequence
$$
0 \lrto \mathop{{\rm Ext}}\nolimits^1(B,A)  \lrto H^2(B,A)  \stackrel{\theta}{\lrto} \Hom (\bigwedge\nolimits_{\dz}^2 B , A )  \lrto 0  \, \mbox{,}
$$
where we consider $A$ as a trivial $B$-module, and $\theta(f)(x,y)= f(x,y) - f(y,x)$ for a $2$-cocycle $f$ which gives the element from  $H^2(B,A) $, and $x,y \in B$. For $B \simeq \dz \oplus \dz$ and $A \simeq \dz$ we have $\mathop{{\rm Ext}}\nolimits^1(B,A) =0 $. Therefore, in this case,  an element from $H^2(B,A)$ is uniquely defined by its image under the map $\theta$.
We have $\bigwedge_{\dz}^2 B \simeq \dz$ and an automorphism of  $\bigwedge_{\dz}^2 B$ induced by
 the permutation of groups in $B \simeq \dz \oplus \dz$ is
 the multiplication by $-1$. Therefore the permutation of groups in $B \simeq \dz \oplus \dz$ is
  the same as the multiplication to $-1$ of an element from $H^2(B,A)$.

We note also that for a $2$-cocycle $f$ which gives   central extension~\eqref{centr_ext}, we have
\begin{equation}  \label{det}
\theta(f)((n_1 \oplus p_1) \wedge (n_2 \oplus p_2)) = p_1 n_2 - n_1 p_2  \, \mbox{,}\qquad \mbox{where} \quad n_1, p_1, n_2, p_2 \in \dz \, \mbox{,}
\end{equation}
 and we used a section of the homomorphism $\pi$ which gives the isomorphism $\Gamma \simeq \dz \oplus \dz$. But the result in formula~\eqref{det} does not depend on the choice of a section of $\pi$.
\end{nt}

\begin{nt} \em
The isomorphism constructed in item~\ref{iso-Heis} of Proposition~\ref{prop-Heis} depends on the choice of a section of the homomorphism $\pi$.
It will be written in more general situation in Remark~\ref{section}  later, how this isomorphism changes after another choice of a section of the homomorphism $\pi$.

\end{nt}

\subsubsection{Action of the discrete Heisenberg group on function and distribution spaces}
\label{action}

Now we define an action of the group $\tilde{\Gamma}_{\alpha}$ on the $\dc$-vector spaces $\D_{+, \alpha, r}(\Gamma)$ and
$\D'_{+, \alpha, r}(\Gamma)$, where $\Gamma$ is a group as in the beginning of section~\ref{discr_2}, $\alpha \in \breve{\Gamma}$
and $r \in \dc^*$.

We recall that for any elements $\alpha_1, \alpha_2$ from the set $\breve{\Gamma}$ we have the natural embedding of sets
$$
[\alpha_1, \alpha_2]  \hookrightarrow \mu_r(\alpha_1, \alpha_2) \qquad \quad \mbox{given as} \qquad h \longmapsto h \times 1 \, \mbox{.}
$$
We denote this map by $\vartheta_r$.

We consider an element $(\beta, h) \in \tilde{\Gamma}_{\alpha}$, where $\beta \in \Gamma$ and $h \in [\alpha, \beta \circ \alpha]$.
We consider also an element
$$
Q = \prod_{k \in \dz} f_k \otimes \mu_k  \, \mbox{,} \qquad \mbox{where} \qquad f_k \in \D_+(\pi^{-1}(k)) \, \mbox{,} \quad 0 \ne \mu_k \in
\mu_r(k+1, \alpha) \, \mbox{.}
$$
We define
$$
(\beta, h) \circ Q = \prod_{k \in \dz} \beta(f_k) \otimes \left( \beta(\mu_k) \otimes \vartheta_r(h^{-1}) \right) \, \mbox{,}
$$
where $\beta(f_k) \in  \D_+(\pi^{-1}(k + \pi(\beta)))$ comes from the natural isomorphism induced by the action of the element $\beta$,
and $\beta(\mu_k) \in \mu_r( k+1+ \pi(\beta) , \beta \circ \alpha )$. Hence we have that
$$
\beta(\mu_k) \otimes \vartheta_r(h^{-1})  \in \mu_r(k+1 + \pi(\beta), \alpha)  \, \mbox{.}
$$

It is easy to see that this action of the element $(\beta, h)$ on the $\dc$-vector space $\prod\limits_{k \in \dz} \D_+(\pi^{-1}(k)) \otimes_{\dc} \mu_r(k+1, \alpha)$  preserves the $\dc$-vector subspace $\D_{+, \alpha, r}(\Gamma)$ (see Definition~\ref{D-space}). Thus we obtain the action of the group $\tilde{\Gamma}_{\alpha}$ on the $\dc$-vector space $\D_{+, \alpha, r}(\Gamma)$.

Similarly, we have an action of the group $\tilde{\Gamma}_{\alpha}$
on the $\dc$-vector space $\D'_{+, \alpha, r}(\Gamma)$. This action is induced by the following action:
$$
(\beta, h)  \circ  \bigoplus_{k \in \dz}   \left(g_k  \otimes \lambda_k \right)    =
\bigoplus_{k \in \dz}  \left( \beta(g_k)  \otimes \left( \vartheta_r(h) \otimes  \beta(\lambda_k)        \right)    \right)   \, \mbox{,}
$$
where $\beta \in \Gamma$, $h \in [\alpha, \beta \circ \alpha]$, $g_k \in \D'_+(\pi^{-1}(k))$ and $\lambda_k \in \mu_r(\alpha, k+1)$.

 From the described constructions  we have the following formula for any $Q \in \D_{+, \alpha, r}(\Gamma)$, $ S \in \D'_{+, \alpha, r}(\Gamma) $, $\varpi \in \tilde{\Gamma}_{\alpha}$:
\begin{equation} \label{inv_pair}
\langle Q, S  \rangle  = \langle  \varpi \circ Q, \varpi \circ S  \rangle   \, \mbox{,}
\end{equation}
where $\langle \cdot, \cdot \rangle$  is pairing~\eqref{pair2}.

\bigskip

Let $K$ be a two-dimensional local field with  a partial topology and with  finite last residue field $\df_q$. Let $\alpha \in \breve{\Gamma}_K$.
We have the group $
\tilde{\Gamma}_{\alpha}$ for $\Gamma=\Gamma_K$. In this case we will denote the group $\tilde{\Gamma}_{\alpha}$ by $\tilde{\Gamma}_{K, \alpha}$.
 We construct now an action of the group
$\tilde{\Gamma}_{K,\alpha}$ on the $\dc$-vector spaces $\D_{\alpha}(K)_{(\oo'_K)^*}$  and $\D'_{\alpha}(K)^{(\oo'_K)^*}$. We connect also these actions with actions of the group $\tilde{\Gamma}_{\alpha}$ described above via the natural maps from Theorem~\ref{Theorem}  (when $r = q$ and $\Gamma = \Gamma_K$).

We recall that we have the rank-$2$ valuation  $\nu_K : K^*  \lrto \Gamma_K$.
We also recall that for any $\alpha_1, \alpha_2 \in \Gamma_K$ there is a natural isomorphism $\mu_q(\alpha_1, \alpha_2) \simeq \mu(\alpha_1, \alpha_2)$ which is compatible with
isomorphisms  from~\eqref{can_me}
and~\eqref{measures}, and also with
  the action of the group~$\Gamma_K$.

For any integers $m \ge n $ let $f_{n,m} \otimes \mu_m$ be an element from projective system which gives the element from  the space  $\D_{\alpha}(K)_{(\oo'_K)^*}$,
see Proposition~\ref{prop1}. (Here ${f_{n,m}  \in \D(\oo(n)/ \oo(m))_{(\oo'_K)^*}}$ and $\mu_m  \in \mu(m, \alpha)$.) For any
$(\beta, h) \in \tilde{\Gamma}_{K, \alpha}$ we define an action
$$
(\beta, h) \circ \left( f_{n,m} \otimes \mu_m    \right) = \beta'(f_{n,m}) \otimes \left( \beta'( \mu_m) \otimes \vartheta_q(h^{-1})  \right) \, \mbox{,}
$$
where $\beta' \in K^*$ such that $\nu_K(\beta')= \beta$, and
$$\beta'(f_{n,m}) \in \D(\oo(n + \pi(\beta))/ \oo(m + \pi(\beta)))_{(\oo'_K)^*} \, \mbox{,} \qquad
\beta'( \mu_m) \otimes \vartheta_q(h^{-1}) \in \mu(m + \pi(\beta), \alpha) \, \mbox{.}$$
This action does not depend on the choice of $\beta'$ (we recall that the group $(\oo'_K)^*$ acts trivially on the space $\mu(\alpha_1, \alpha_2)$
for any $\alpha_1, \alpha_2 \in \breve{\Gamma}_K$) and defines the  well-defined action of the group $\tilde{\Gamma}_{K,\alpha}$
on the $\dc$-vector space $\D_{\alpha}(K)_{(\oo'_K)^*}$.

Similarly, we define an action of the group $\tilde{\Gamma}_{K,\alpha}$
on the $\dc$-vector space $\D'_{\alpha}(K)^{(\oo'_K)^*}$
such that the natural pairing~\eqref{dual_inv_two}  is invariant with respect to the action of the group $\tilde{\Gamma}_{K,\alpha}$ (compare also with formula~\eqref{inv_pair}).

From above constructions we obtain a proposition.

\begin{prop}  \label{prop-equiv}
Let $K$ be a two-dimensional local field with  a partial topology and  with  finite last residue field $\df_q$. Let  $\alpha \in \breve{\Gamma}_K$.
 The natural maps
 $$
 \xymatrix{
\D_{\alpha}(K)_{(\oo_K')^*} \, \ar@{->>}[r] & \, \D_{+,\alpha,q}(\Gamma_K})
\qquad \mbox{and} \qquad
\xymatrix{
\D'_{+,\alpha,q}(\Gamma_K) \, \ar@{^{(}->}[r]  & \, \D'_{\alpha}(K)^{(\oo_K')^*}
}
 $$
   (see Theorem~\ref{Theorem}) are $\tilde{\Gamma}_{K,\alpha}$-equivariant.
\end{prop}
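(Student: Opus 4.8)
The plan is to exploit that the two group actions are given by formally identical formulas, so that verifying $\tilde{\Gamma}_{K,\alpha}$-equivariance of the maps from Theorem~\ref{Theorem} reduces to checking that the constituent canonical isomorphisms out of which these maps are built intertwine multiplication by a lift $\beta' \in K^*$ with the action of $\beta = \nu_K(\beta') \in \Gamma_K$. First I would treat the surjection~\eqref{sur}. Recall that this map is induced by the map $\Phi$ of~\eqref{Phi}, that is, by the family of maps $\tau_k$ together with the level-wise canonical isomorphisms $\D(\oo(k)/\oo(k+1))_{(\oo'_K)^*} \simeq \D_+(\pi^{-1}(k))$ coming from Propositions~\ref{prop_dual} and~\ref{delta}. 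Multiplication by $\beta'$ is a homeomorphism $\oo(k)/\oo(k+1) \to \oo(k+\pi(\beta))/\oo(k+\pi(\beta)+1)$ and carries the associated $\dz$-torsor $\pi^{-1}(k)$ to $\pi^{-1}(k+\pi(\beta))$; since the identification of Proposition~\ref{prop_dual} is canonical (independent of the choice of local parameter), it is automatically equivariant for these homeomorphisms. Thus the inverse-image and direct-image constituents of $\tau_k$, being themselves natural with respect to multiplication by $\beta'$, are compatible with the shift $f_k \mapsto \beta(f_k)$.

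Next I would handle the measure twist. In both actions the twist is the insertion of $\vartheta_q(h^{-1})$ for $h \in [\alpha, \beta\circ\alpha]$, read either in $\mu_q$ or in $\mu$. By Remark~\ref{more-gen} the canonical isomorphism $\mu_q(\alpha_1,\alpha_2) \simeq \mu(\alpha_1,\alpha_2)$ is compatible both with the tensor multiplications of~\eqref{can_me} and~\eqref{measures} and with the $\Gamma_K$-action; hence $\beta'(\mu_m)\otimes\vartheta_q(h^{-1})$ on the $K$-side and $\beta(\mu_k)\otimes\vartheta_q(h^{-1})$ on the $\Gamma_K$-side correspond under the natural map. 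Combining this with the previous paragraph and passing to the projective limit on $m > 0$ (as in the proofs of Propositions~\ref{prop1} and~\ref{W}) shows that the surjection~\eqref{sur} is $\tilde{\Gamma}_{K,\alpha}$-equivariant. All of this is well-defined because the formula for the $K$-action does not depend on the choice of $\beta'$, as $(\oo'_K)^*$ acts trivially on every $\mu(\alpha_1,\alpha_2)$.

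For the injection $\iota$ of~\eqref{inj} I would argue by duality rather than repeat the computation. By item~$3$ of Theorem~\ref{Theorem}, the maps~\eqref{sur} and~\eqref{inj} are adjoint with respect to pairing~\eqref{pair2} and pairing~\eqref{dual_inv_two}. Both pairings are invariant under the respective group actions: for pairing~\eqref{pair2} this is formula~\eqref{inv_pair}, and the $K$-action on $\D'_{\alpha}(K)^{(\oo'_K)^*}$ was constructed precisely so that pairing~\eqref{dual_inv_two} is $\tilde{\Gamma}_{K,\alpha}$-invariant. Writing $s$ for the surjection~\eqref{sur}, then for $\varpi \in \tilde{\Gamma}_{K,\alpha}$, $y \in \D'_{+,\alpha,q}(\Gamma_K)$ and arbitrary $x$ one has $\langle x, \iota(\varpi\circ y)\rangle = \langle s(x), \varpi\circ y\rangle = \langle \varpi^{-1}\circ s(x), y\rangle = \langle s(\varpi^{-1}\circ x), y\rangle = \langle \varpi^{-1}\circ x, \iota(y)\rangle = \langle x, \varpi\circ\iota(y)\rangle$, using successively adjointness, invariance of pairing~\eqref{pair2}, the equivariance of~\eqref{sur} proved above, adjointness again, and invariance of pairing~\eqref{dual_inv_two}. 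Non-degeneracy of pairing~\eqref{dual_inv_two} then forces $\iota(\varpi\circ y) = \varpi\circ\iota(y)$.

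The main obstacle is the first step: carefully verifying that the level-wise identification of coinvariants with $\D_+(\pi^{-1}(k))$ and the accompanying measure identification $\mu_q \simeq \mu$ are genuinely equivariant, i.e. that no hidden choice (of local parameter, or of lift $\beta'$) breaks naturality. Once the canonicity of these identifications is pinned down via Propositions~\ref{prop_dual} and~\ref{delta} and Remark~\ref{more-gen}, the remainder is formal bookkeeping, and the duality argument for the distribution side is routine.
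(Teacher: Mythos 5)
Your proposal is correct. The paper itself gives no explicit proof --- Proposition~\ref{prop-equiv} is introduced with the phrase ``From above constructions we obtain a proposition,'' i.e.\ the intended argument is exactly your first step: the action of $\tilde{\Gamma}_{K,\alpha}$ on $\D_{\alpha}(K)_{(\oo_K')^*}$ is defined level-wise by formulas that are formally identical to those defining the action on $\D_{+,\alpha,q}(\Gamma_K)$, and the maps of Theorem~\ref{Theorem} are assembled from the level-wise canonical identifications of Propositions~\ref{prop_dual} and~\ref{delta} together with the measure identification of Remark~\ref{more-gen}, all of which are canonical and hence intertwine multiplication by $\beta'$ with the action of $\beta=\nu_K(\beta')$. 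Where you genuinely deviate is in handling the injection~\eqref{inj}: the paper implicitly expects the same direct level-wise check on the distribution side, whereas you deduce it by adjointness from the equivariance of the surjection, the invariance of the two pairings (formula~\eqref{inv_pair} and the construction of the $K$-side action), and the non-degeneracy of pairing~\eqref{dual_inv_two} established in section~\ref{inv-dual}. This duality route is sound --- all three ingredients are available in the paper --- and it buys you a proof that avoids redoing the bookkeeping on the inductive-limit side; the direct check buys nothing extra here, so your shortcut is a legitimate economy rather than a gap.
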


\subsection{Extended discrete Heisenberg group}

In this section we define and describe an extended discrete Heisenberg group such that the discrete Heisenberg group is a subgroup of this group.
We also define an action of the extended discrete Heisenberg group on the spaces of functions and distributions such that this action extends the action of the discrete Heisenberg group.

\subsubsection{Definition and action of the extended discrete Heisenberg group}  \label{act-ext-heis}

We recall that we consider a central extension:
$$
0 \lrto \dz \lrto \Gamma \stackrel{\pi}{\lrto} \dz \lrto 0  \, \mbox{.}
$$

We consider a subgroup $C$ of the automorphism group of the group $\Gamma$ such that this subgroup preserves this central extension and acts identically on both groups $\dz$ from this central extension.

This central extension can be described as the set of $\dz$-torsors $\pi^{-1}(n)$, where $n \in \dz$, such that there are  isomorphisms of $\dz$-torsors:
$$
\pi^{-1}(n_1)  \otimes_{\dz} \pi^{-1}(n_2)  \lrto \pi^{-1}(n_1 + n_2)  \qquad \mbox{for any}  \quad n_1, n_2 \in \dz \, \mbox{,}
$$
and these isomorphisms satisfy the associativity conditions for any $n_1 , n_2 , n_3 \in \dz$.
Hence the group $C$ can be identified with the automorphism group of the $\dz$-torsor $\pi^{-1}(1)$. Therefore $C = \dz$. Explicitly, we have (compare also with~\eqref{ch}):
$$
k \diamond (n,p)= (n, kn + p )  \, \mbox{,} \quad \mbox{where}  \quad n \in \pi(\Gamma) \, \mbox{,}  \quad p \in \pi^{-1}(n) \,  \mbox{,} \quad k \in C = \dz  \, \mbox{.}
$$
We extend this action on the set $\breve{\Gamma}$:
$$
k \diamond (n, -\infty) = (n, - \infty)  \, \mbox{.}
$$

Thus the group $\dz=C$ acts on the set $\breve{\Gamma}$. Moreover, we have
$$
k \diamond (\alpha_1 \circ \alpha_2) = (k \diamond \alpha_1 ) \circ (k \diamond \alpha_2)   \quad
\mbox{for}  \quad k \in \dz =C \, \mbox{,} \quad \alpha_1 \, \mbox{,}  \alpha_2 \in \breve{\Gamma} \, \mbox{.}
$$

From the construction of the $\dz$-torsor $[\alpha_1, \alpha_2]$, where $\alpha_1, \alpha_2 \in \breve{\Gamma}$,
it is easy to see that the action of the group $\dz = C$ on the set $\breve{\Gamma}$
is naturally extended to an action of the group $\dz = C$ on the groupoid associated with $\breve{\Gamma}$.
(We recall that this groupoid  is a category, where the objects are elements of the set $\breve{\Gamma}$ and the set of morphisms between $\alpha_1$ and $\alpha_2$ from $\breve{\Gamma}$ is the $\dz$-torsor $[\alpha_1, \alpha_2]$.)

Moreover, we have also a property:
$$
k \diamond (\beta(h))  = (k \diamond \beta)(k(h))   \quad \mbox{for} \quad k \in C = \dz \, \mbox{,}  \quad \beta \in \Gamma \, \mbox{,}
\quad h \in [\alpha_1, \alpha_2] \, \mbox{,} \quad \alpha_1, \alpha_2 \in \breve{\Gamma} \, \mbox{,}
$$
and $\beta( \cdot)$, $k(\cdot)$ and $(k \diamond \beta) (\cdot)$
 are the results of the action of elements $\beta \in \Gamma$, $k \in C$ and $k \diamond \beta \in \Gamma$
 on corresponding elements of corresponding $\dz$-torsors.

From these properties and from the construction of the group $\tilde{\Gamma}_{\alpha}$, where $\alpha \in \breve{\Gamma}$,
we obtain the following  isomorphism of groups for any $m \in C =\dz$ and $\alpha \in \breve{\Gamma}$:
$$
T_{m, \alpha}  \; : \;   \tilde{\Gamma}_{\alpha}  \lrto  \tilde{\Gamma}_{m \diamond \alpha}  \, \mbox{,} \qquad
T_{m, \alpha} ((\beta, h)) = (m \diamond \beta, m(h) ) \, \mbox{,}
$$
where $(\beta, h) \in \tilde{\Gamma}_{\alpha}$.

Besides, we have for any $m,k \in C=\dz$ that the composition of $T_{k, \alpha}$ with $T_{m, k \diamond \alpha }$ is equal to $T_{m + k,  \alpha }$.

We recall that for any $\alpha_1, \alpha_2 \in \breve{\Gamma}$ we have constructed in item~\ref{item1}  of Proposition~\ref{prop-Heis} the canonical isomorphism $\Lambda_{\alpha_2, \alpha_1}$
from the group $\tilde{\Gamma}_{\alpha_1}$ to the group   $\tilde{\Gamma}_{\alpha_2}$.
From constructions we obtain at once that the following diagram is commutative:
\begin{equation}  \label{square}
 \xymatrix{
\tilde{\Gamma}_{\alpha_1}
  \ar[rr]^{T_{m, \alpha_1}}
\ar[d]_{\Lambda_{\alpha_2, \alpha_1}} & & \tilde{\Gamma}_{m \diamond \alpha_1}
\ar[d]^{\Lambda_{m \diamond \alpha_2, m \diamond \alpha_1}}
\\
\tilde{\Gamma}_{\alpha_2}  \ar[rr]_{T_{m, \alpha_2}}
 & &
 \tilde{\Gamma}_{m \diamond \alpha_2}
}
\end{equation}

Now for any $\alpha \in \breve{\Gamma}$ an element $m \in C = \dz$ defines an automorphism of the group $\tilde{\Gamma}_{\alpha}$ which is equal to the composition of $T_{m, \alpha}$ with $\Lambda_{ \alpha, m \diamond \alpha}$. From diagram~\eqref{square}   we obtain that these automorphisms define the homomorphism (which is a monomorphism) from the group
$C = \dz$  to the group $\mathop{\rm Aut} \tilde{\Gamma}_{\alpha}$.

\begin{defin}
Let $\alpha \in \breve{\Gamma}$.
We define an extended discrete Heisenberg group
$$
\hat{\Gamma}_{\alpha} = \tilde{\Gamma}_{\alpha}  \rtimes \dz  \, \mbox{,}
$$
where the homomorphism $\dz = C \lrto \mathop{\rm Aut} \tilde{\Gamma}_{\alpha}$ is just described above.
\end{defin}

Explicitly, we have the group law in~$\hat{\Gamma}_{\alpha}$:
$$
(f_1, m_1) (f_2,m_2)= (f_1 \, m_1(f_2), m_1 + m_2 )  \, \mbox{,}
$$
where $f_1, f_2 \in \tilde{\Gamma}_{\alpha}$, $m_1, m_2 \in \dz=C$ and $m_1(f_2)$ is the image of $f_2$ under the automorphism of $\tilde{\Gamma}_{\alpha}$ given by $m_1$.

From diagram~\eqref{square}   it is easy to obtain a proposition.

\begin{prop} \label{map}
Let $\alpha_1, \alpha_2 \in \breve{\Gamma}$. Then the map
$$
\hat{\Gamma}_{\alpha_1} \, \ni \, (f,m)  \longmapsto (\Lambda_{\alpha_2, \alpha_1}(f), m) \, \in \, \hat{\Gamma}_{\alpha_2} \, \mbox{,} \qquad  \mbox{where}
\quad
f \in \tilde{\Gamma}_{\alpha} \, \mbox{,}  \quad m \in \dz =C  \, \mbox{,}
$$
gives a   canonical isomorphism from the group $\hat{\Gamma}_{\alpha_1}$
to the group $\hat{\Gamma}_{\alpha_2}$.
\end{prop}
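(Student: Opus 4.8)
The plan is to exploit the fact that both groups are semidirect products $\hat{\Gamma}_{\alpha_i} = \tilde{\Gamma}_{\alpha_i} \rtimes \dz$ sharing the same factor $C = \dz$, and that the proposed map $\Phi \colon (f,m) \mapsto (\Lambda_{\alpha_2,\alpha_1}(f), m)$ acts as the isomorphism $\Lambda_{\alpha_2,\alpha_1}$ on the normal factor and as the identity on $C$. Since $\Lambda_{\alpha_2,\alpha_1}$ is already a group isomorphism $\tilde{\Gamma}_{\alpha_1} \to \tilde{\Gamma}_{\alpha_2}$ by item~\ref{item1} of Proposition~\ref{prop-Heis}, and the $\dz$-component is left untouched, $\Phi$ is automatically a bijection; so the only content is the homomorphism property. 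First I would recall that, by definition of the semidirect product, $m \in C$ acts on $\tilde{\Gamma}_{\alpha_i}$ by the automorphism $\sigma_m^{\alpha_i} = \Lambda_{\alpha_i,\, m \diamond \alpha_i} \circ T_{m,\alpha_i}$. Writing out $\Phi\big((f_1,m_1)(f_2,m_2)\big)$ and $\Phi(f_1,m_1)\,\Phi(f_2,m_2)$ and using that $\Lambda_{\alpha_2,\alpha_1}$ is a homomorphism, the homomorphism property of $\Phi$ collapses to the single intertwining identity
$$
\Lambda_{\alpha_2,\alpha_1} \circ \sigma_m^{\alpha_1} = \sigma_m^{\alpha_2} \circ \Lambda_{\alpha_2,\alpha_1}
\qquad \mbox{for every } m \in C = \dz \, \mbox{,}
$$
the equality of the $\dz$-components being trivial since they simply add.

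The core of the argument is then to deduce this intertwining identity from the commutative square~\eqref{square}. Substituting $\sigma_m^{\alpha_i} = \Lambda_{\alpha_i,\, m\diamond\alpha_i} \circ T_{m,\alpha_i}$ and using the commutativity of~\eqref{square} in the form $\Lambda_{m\diamond\alpha_2,\,m\diamond\alpha_1} \circ T_{m,\alpha_1} = T_{m,\alpha_2} \circ \Lambda_{\alpha_2,\alpha_1}$, the right-hand side becomes $\Lambda_{\alpha_2,\,m\diamond\alpha_2} \circ \Lambda_{m\diamond\alpha_2,\,m\diamond\alpha_1} \circ T_{m,\alpha_1}$, while the left-hand side is $\Lambda_{\alpha_2,\alpha_1} \circ \Lambda_{\alpha_1,\,m\diamond\alpha_1} \circ T_{m,\alpha_1}$. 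Applying the transitivity relation $\Lambda_{\beta_3,\beta_2} \circ \Lambda_{\beta_2,\beta_1} = \Lambda_{\beta_3,\beta_1}$ to each composite of two $\Lambda$'s, both sides reduce to $\Lambda_{\alpha_2,\,m\diamond\alpha_1} \circ T_{m,\alpha_1}$, so they agree.

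The one ingredient not spelled out earlier is the transitivity $\Lambda_{\beta_3,\beta_2} \circ \Lambda_{\beta_2,\beta_1} = \Lambda_{\beta_3,\beta_1}$ of the canonical isomorphisms; I would record it as a short preliminary remark. It follows directly from the explicit formula $\Lambda_{\beta_2,\beta_1}(\beta,h) = (\beta,\, g \otimes h \otimes \beta(g^{-1}))$ of Proposition~\ref{prop-Heis}: composing with trivializing choices $g \in [\beta_2,\beta_1]$ and $g' \in [\beta_3,\beta_2]$ yields the element $g' \otimes g \in [\beta_3,\beta_1]$ as the trivializing datum, using that $\otimes$ is compatible with the $\Gamma$-action and with inverses (so $\beta(g^{-1}) \otimes \beta((g')^{-1}) = \beta((g' \otimes g)^{-1})$), and this is exactly $\Lambda_{\beta_3,\beta_1}$. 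I expect the only real obstacle to be bookkeeping: one must track carefully which $\dz$-torsor $[\,\cdot\,,\,\cdot\,]$ each element lives in, so that the indices appearing in~\eqref{square} and in the transitivity relation match up correctly. Once this index juggling is arranged, the verification is the purely formal concatenation of these two facts.
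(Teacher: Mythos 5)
Your proposal is correct and follows exactly the route the paper intends: the paper's entire justification is the sentence ``From diagram~\eqref{square} it is easy to obtain a proposition,'' and you have simply written out the reduction to the intertwining identity $\Lambda_{\alpha_2,\alpha_1}\circ\sigma_m^{\alpha_1}=\sigma_m^{\alpha_2}\circ\Lambda_{\alpha_2,\alpha_1}$ and verified it from that commutative square together with the transitivity $\Lambda_{\beta_3,\beta_2}\circ\Lambda_{\beta_2,\beta_1}=\Lambda_{\beta_3,\beta_1}$, which indeed follows from the explicit formula in item~\ref{item1} of Proposition~\ref{prop-Heis}. Nothing further is needed.
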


\bigskip

Now we define  actions of the group  $\hat{\Gamma}_{\alpha}$, where $\alpha \in \breve{\Gamma}$, on the $\dc$-vector spaces $\D_{+, \alpha, r}(\Gamma)$ and
$\D'_{+, \alpha, r}(\Gamma)$  such that these actions extend the actions
of the subgroup $\tilde{\Gamma}_{\alpha}$ described in section~\ref{action}.

For this goal we have to define an action of the group $\dz = C$  on the $\dc$-vector spaces $\D_{+, \alpha, r}(\Gamma)$ and
$\D'_{+, \alpha, r}(\Gamma)$.
First, we define
these actions when $\alpha= \alpha_0=(0, -\infty)$.

We define
$$
m \diamond \prod_{k \in \dz} (f_k \otimes \mu_k) = \prod_{k \in \dz} \left( m(f_k) \otimes m(\mu_k) \right)  \, \mbox{,}
$$
where $m \in \dz=C$, $f_k \in \D_+(\pi^{-1}(k))$, $0 \ne \mu_k \in \mu_r(k+1, 0)$, and $m( \cdot)$ are the results of  corresponding actions of an element $m$
on elements from $\D_+(\pi^{-1}(k))$  or  from $\mu_r(k+1, 0)$. It is easy to see that this action defines the action of the group $\dz=C$
on the $\dc$-vector space  $\prod\limits_{k \in \dz} \D_+(\pi^{-1}(k)) \otimes_{\dc} \mu_r(k+1, 0)$ such that this action preserves the subspace $\D_{+, \alpha_0, r}(\Gamma)$.

Similarly, we define an action of the group $\dz = C$ on the $\dc$-vector space $\D'_{+, \alpha_0, r}(\Gamma)$ as the well-defined action induced by the following action:
$$
m \diamond \bigoplus_{k \in \dz} (g_k \otimes \lambda_k) = \bigoplus_{k \in \dz} \left( m(g_k) \otimes m(\lambda_k)     \right)   \, \mbox{,}
 $$
 where  $m \in \dz=C$, $g_k \in \D'_+(\pi^{-1}(k))$ and $\lambda_k \in \mu_r(0, k+1)$.

Now we  define the action for arbitrary $\alpha \in \breve{\Gamma}$.
We choose an element $h \in \mu_r(\alpha, \alpha_0)$ such that $h \ne 0$. Then for any $m \in \dz=C$ and any $Q \in \D_{+, \alpha, r}(\Gamma)$,
 $S \in \D'_{+, \alpha, r}(\Gamma)$
 we define
$$
m \diamond Q = (m \diamond (Q \otimes h)) \otimes h^{-1}   \, \mbox{,}
\qquad
m \diamond S = (m \diamond  (S \otimes h^{-1}) \otimes h  \, \mbox{,}
$$
where we used that $Q \otimes h \in \D_{+, \alpha_0, r}(\Gamma)$ and $S \otimes h^{-1}  \in \D'_{+, \alpha_0, r}(\Gamma)$. Clearly, the last definition does not depend on the choice of $h$.

At last we write  actions of an element $(f,m) \in \hat{\Gamma}_{\alpha}$:
\begin{equation}\label{actions-ex}
(f,m) \diamond Q = f \circ ( m \diamond Q )  \, \mbox{,} \qquad (f,m) \diamond S = f \circ (m \diamond S)  \, \mbox{,}
\end{equation}
where  $Q \in \D_{+, \alpha, r}(\Gamma)$  and
 $S \in \D'_{+, \alpha, r}(\Gamma)$.  These formulas will  define actions of the group $\hat{\Gamma}_{\alpha}$ if  we prove that
 $$
 m(f) \diamond Q = m \diamond (f \circ (-m \diamond (Q)))   \quad \mbox{and} \quad  m(f) \diamond S = m \diamond (f \circ (-m \diamond (S))) \, \mbox{,}
 $$
 where $f \in \tilde{\Gamma}_{\alpha}$, $m \in \dz=C$. The last formulas follow from a lemma.

 \begin{lemma} \label{well-def}
 Let $\alpha \in \breve{\Gamma}$, $m \in \dz=C$, $f \in \tilde{\Gamma}_{\alpha}$, $Q \in \D_{+, \alpha, r}(\Gamma)$  and
 $S \in \D'_{+, \alpha, r}(\Gamma)$. We have
 $$
 m \diamond (f \circ Q) = m(f) \circ (m \diamond Q)    \qquad \mbox{and}  \qquad  m \diamond (f \circ S) = m(f) \circ (m \diamond S)  \, \mbox{.}
 $$
 \end{lemma}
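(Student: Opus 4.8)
The plan is to prove both identities by reducing to the special fibre $\alpha_0=(0,-\infty)$, on which the action of $C=\dz$ is defined componentwise, and then transporting the result to an arbitrary $\alpha$ by means of the twisting element $h_0\in\mu_r(\alpha,\alpha_0)$ used to define $m\diamond(-)$ in general. I will treat only $Q\in\D_{+,\alpha,r}(\Gamma)$; the statement for $S\in\D'_{+,\alpha,r}(\Gamma)$ follows from the dual computation.

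First I would settle the base case $\alpha=\alpha_0$. Since $m\diamond(n,-\infty)=(n,-\infty)$, the point $\alpha_0$ is fixed by $C$, so $m\diamond\alpha_0=\alpha_0$ and $\Lambda_{\alpha_0,m\diamond\alpha_0}=\Lambda_{\alpha_0,\alpha_0}=\mathrm{id}$; hence for $f=(\beta,h)$ the $C$-automorphism reduces to $m(f)=T_{m,\alpha_0}(f)=(m\diamond\beta,m(h))$. Writing $Q=\prod_k f_k\otimes\mu_k$ and computing both sides componentwise, the identity $m\diamond(f\circ Q)=m(f)\circ(m\diamond Q)$ becomes, at each index, the two equalities $m(\beta(f_k))=(m\diamond\beta)(m(f_k))$ and $m(\beta(\mu_k)\otimes\vartheta_r(h^{-1}))=(m\diamond\beta)(m(\mu_k))\otimes\vartheta_r(m(h)^{-1})$. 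Both are instances of the compatibilities $m\diamond(\beta\circ x)=(m\diamond\beta)\circ(m\diamond x)$ and $m\diamond(\beta(h))=(m\diamond\beta)(m(h))$ (applied to the torsors $\pi^{-1}(k)$ and to the one-dimensional spaces $\mu_r$), together with the fact that $m$ commutes with $\vartheta_r$ and with inversion; the index shift matches because $C$ acts trivially on $\pi(\Gamma)$, so $\pi(m\diamond\beta)=\pi(\beta)$.

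Next I would prove the transport step: for $f\in\tilde{\Gamma}_{\alpha}$ and a nonzero $h_0\in\mu_r(\alpha,\alpha_0)$,
$$
(f\circ Q)\otimes h_0 = \Lambda_{\alpha_0,\alpha}(f)\circ(Q\otimes h_0) \, \mbox{.}
$$
Expanding both sides componentwise and cancelling the common factors $\beta(\mu_k)$, this reduces to the scalar identity $h_0\otimes\vartheta_r(g_0)=\beta(h_0)\otimes\vartheta_r(\beta(g_0))$ in $\mu_r(\alpha,\alpha)=\dc$, where $g_0\in[\alpha_0,\alpha]$ is the element defining $\Lambda_{\alpha_0,\alpha}$. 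This holds because $\beta$ acts as the identity on the one-dimensional ``diagonal'' space $\mu_r(\alpha,\alpha)\simeq\dc$ (the $\Gamma$-action preserves identity morphisms of the groupoid and is compatible with~\eqref{can_me}), while commuting with $\vartheta_r$.

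Finally I would combine the two steps. Using the transport identity and the definition $m\diamond Q=(m\diamond(Q\otimes h_0))\otimes h_0^{-1}$, set $\tilde{f}=\Lambda_{\alpha_0,\alpha}(f)$ and $\tilde{Q}=Q\otimes h_0$; then $m\diamond(f\circ Q)=(m\diamond(\tilde{f}\circ\tilde{Q}))\otimes h_0^{-1}$, which by the base case equals $(m(\tilde{f})\circ(m\diamond\tilde{Q}))\otimes h_0^{-1}$. On the other hand, applying the transport identity to $m(f)\in\tilde{\Gamma}_{\alpha}$ gives $(m(f)\circ(m\diamond Q))\otimes h_0=\Lambda_{\alpha_0,\alpha}(m(f))\circ(m\diamond\tilde{Q})$. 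The two expressions coincide once I know $\Lambda_{\alpha_0,\alpha}(m(f))=m(\Lambda_{\alpha_0,\alpha}(f))$, i.e.\ that the $C$-automorphisms of $\tilde{\Gamma}_{\alpha}$ and $\tilde{\Gamma}_{\alpha_0}$ intertwine under $\Lambda_{\alpha_0,\alpha}$; this is exactly the commutativity of diagram~\eqref{square} (with $\alpha_1=\alpha$, $\alpha_2=\alpha_0$) combined with the cocycle relation $\Lambda_{\alpha_0,\alpha}\circ\Lambda_{\alpha,m\diamond\alpha}=\Lambda_{\alpha_0,m\diamond\alpha}$ for the canonical isomorphisms~\eqref{can}. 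The main obstacle is bookkeeping: carefully tracking which $\dz$-torsor or one-dimensional space each symbol lives in through the successive twists, and isolating the single scalar identity in the transport step --- the conceptual content is carried entirely by diagram~\eqref{square} and the triviality of the $\Gamma$-action on the diagonal spaces $\mu_r(\alpha,\alpha)$.
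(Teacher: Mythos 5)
Your proposal is correct and follows essentially the same route as the paper: reduce to the case $\alpha=\alpha_0=(0,-\infty)$ via the transport identity $(f\circ Q)\otimes h_0=\Lambda_{\alpha_0,\alpha}(f)\circ(Q\otimes h_0)$ together with the intertwining of the $C$-automorphisms under $\Lambda_{\alpha_0,\alpha}$ (the content of Proposition~\ref{map} and diagram~\eqref{square}), and then verify the identity componentwise at $\alpha_0$ using $m\diamond(0,-\infty)=(0,-\infty)$. You merely spell out the steps the paper labels ``easy to check,'' and your bookkeeping of the torsors and scalar spaces is accurate.
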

 \begin{proof}
 We prove the first formula, the proof of the second formula is similar. By Proposition~\ref{map}   and  the following formula (which is easy to check)
 $$
 g \circ R = h^{-1} \otimes  ( \Lambda_{\alpha_0, \alpha}(g) \circ ( R \otimes h)  )  \, \mbox{,}
 $$
 where $g \in \tilde{\Gamma}_{\alpha}$,  $R \in \D_{+, \alpha, r}(\Gamma)$, $\alpha_0 \in \breve{\Gamma}$, $0 \ne h \in \mu_r(\alpha, \alpha_0)$,
 we reduce the proof to the case when $\alpha = \alpha_0=(0, -\infty)$. In this case the proof is easy and follows at once, by constructions of actions, because
 $m \diamond (0, -\infty) = (0 , -\infty)$.
 \end{proof}

\medskip

We note that it is easy to see that the natural non-degenerate pairing between $\dc$-vector spaces  $\D_{+, \alpha, r}(\Gamma)$ and $\D'_{+, \alpha, r}(\Gamma)$ (see Proposition~\ref{p}) is invariant with respect to the action of the group $\hat{\Gamma}_{\alpha}$ on these spaces.

\bigskip

\begin{nt}  \em
The action of the group $\dz=C$ on the group $\tilde{\Gamma}_{\alpha}$, where $\alpha \in \breve{\Gamma}$,
is an analog of the ``loop rotation'' in the theory of loop groups, see~\cite[ch.~7.6, ch.~8.6, ch.~14.1]{PS}.
The loop rotation from~\cite{PS} can be presented as transformations $t \mapsto u t$, where $u \in \mathbb{U}(1) \subset \dc^*$,
of  a local parameter in a field $\dc((t))$. This transformation   gives also an automorphism of the field $\dc((t))$. The field $\dc((t))$ is an Archimedean analog of a two-dimensional local field $L((t))$, where $L$
is a one-dimensional local field with  finite  residue field $\df_q$, i.e. $L$ is either isomorphic to $\df_q((t))$ or isomorphic to
a finite extension of  the field $\dq_p$.
\end{nt}

\begin{nt}  \em
We consider $K = L((t))$, where $L$
is a one-dimensional local field with finite residue field $\df_q$. We fix a local parameter $u \in \oo_L$. We consider $\alpha \in \breve{\Gamma}_K$.
Using the local parameter $u$,
we describe an action of the group ${\hat{\Gamma}}_{K, \alpha}= \hat{\Gamma}_{\alpha}$, where $\Gamma = \Gamma_K$, on the $\dc$-vector spaces
 $\D_{\alpha}(K)_{(\oo'_K)^*}$  and $\D'_{\alpha}(K)^{(\oo'_K)^*}$ (compare with~\cite[\S~5]{OsipPar2}, where it was done for $\alpha = \alpha_0 = (0, -\infty)$ and  the space $\D'_{\alpha_0}(K)^{(\oo'_K)^*}$).

 The group $\dz$ acts on the field $K$ (by automorphisms):
 $$
 \dz \ni m  \longmapsto \left\{ \sum_{n \in \dz} z_n t^n  \longmapsto     \sum_{n \in \dz} z_n (u^mt)^n        \, \mbox{,} \quad z_n \in L            \right\} \, \mbox{.}
 $$
 This action induces the actions of the group $\dz$ on the $\dc$-vector spaces $\D( \oo(n)/ \oo(k) )_{(\oo'_K)^*}$,
 $ \D'( \oo(n)/ \oo(k) )^{(\oo'_K)^*} $ for any $k \ge n \in \dz $ and on  $\mu(k,n)$ for any $k,n \in \dz= \pi(\Gamma_K)$.

 Moreover, these actions
 give the well-define actions of the group $\dz$ on the $\dc$-vector spaces $\D_{\alpha_0}(K)_{(\oo'_K)^*}$  and $\D'_{\alpha_0}(K)^{(\oo'_K)^*}$
via formulas given by inductive and projective limits  from Proposition~\ref{prop1}. Therefore we obtain  actions of the group $\dz $ on the spaces $\D_{\alpha}(K)_{(\oo'_K)^*}$  and $\D'_{\alpha}(K)^{(\oo'_K)^*}$:
$$
m \diamond Q = (m \diamond (Q \otimes h)) \otimes h^{-1}   \, \mbox{,}
\qquad
m \diamond S = (m \diamond  (S \otimes h^{-1}) \otimes h  \, \mbox{,}
$$
 where  $h \in \mu(\alpha, \alpha_0)$ such that $h \ne 0$, $m \in \dz$,  $Q \in \D_{\alpha}(K)_{(\oo'_K)^*} $,
 $S \in \D'_{\alpha}(K)^{(\oo'_K)^*}$ (the actions does not depend on the choice of $h$).

Hence we define actions of  an element $(f,m) \in {\hat{\Gamma}}_{K, \alpha}$:
$$
(f,m) \diamond Q = f \circ (m \diamond Q)   \, \mbox{,}  \qquad (f,m) \diamond S = f \circ (m \diamond S)  \, \mbox{,}
$$
where the actions of the element  $f \in  {\tilde{\Gamma}}_{K, \alpha}$ were described in Section~\ref{action} (before Proposition~\ref{prop-equiv}). This gives the well-defined actions of the group ${\hat{\Gamma}}_{K, \alpha}$ on the $\dc$-vector spaces $\D_{\alpha_0}(K)_{(\oo'_K)^*}$  and $\D'_{\alpha_0}(K)^{(\oo'_K)^*}$. (We have to use an analog of Lemma~\ref{well-def}  to obtain that these actions are  well-defined.)

Besides, by constructions of actions, we obtain that  the natural pairing~\eqref{dual_inv_two}
   between the $\dc$-vector spaces $\D_{\alpha}(K)_{(\oo'_K)^*}$ and $\D'_{\alpha_0}(K)^{(\oo'_K)^*}$
   is invariant with respect to the action of the group ${\hat{\Gamma}}_{K, \alpha}$,
and the natural maps
 $$
 \xymatrix{
\D_{\alpha}(K)_{(\oo_K')^*} \, \ar@{->>}[r] & \, \D_{+,\alpha,q}(\Gamma_K})
\qquad \mbox{and} \qquad
\xymatrix{
\D'_{+,\alpha,q}(\Gamma_K) \, \ar@{^{(}->}[r]  & \, \D'_{\alpha}(K)^{(\oo_K')^*}
}
 $$
  are $\hat{\Gamma}_{K,\alpha}$-equivariant.
\end{nt}

\subsubsection{Explicit description of the  extended discrete Heisenberg group}

We explicitly describe now the group $\hat{\Gamma}_{(0, -\infty)}$ in the same way, as the group   $\tilde{\Gamma}_{(0, -\infty)}$
was described in item~\ref{iso-Heis}  of Proposition~\ref{prop-Heis}.

We recall that  we have $m \diamond (n, -\infty) = (n, -\infty)$, where $m \in \dz = C$ and $n \in \dz = \pi(\Gamma)$. Therefore the group $\dz = C$
acts on the $\dz$-torsor $[n_1, n_2]$ for any $n_1, n_2 \in \dz= \pi(\Gamma)$. (We recall that we use notation $n_1 = (n_1, -\infty)$ and $n_2 = (n_2, -\infty)$).
We calculate this action in the following  lemma.

\begin{lemma}  \label{Calcul}
For any $m \in \dz=C$, for any $n_1, n_2 \in \pi(\Gamma)= \dz$ and any $h \in [n_1, n_2]$  we have
$$
m (h) = \frac{1}{2}m (n_1 +n_2 -1)(n_1 - n_2) + h  \, \mbox{.}
$$
\end{lemma}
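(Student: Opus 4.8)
The plan is to reduce the statement to a single cardinality count performed fiber by fiber over $\pi$. First I would observe that the action of $C = \dz$ on the groupoid associated with $\breve{\Gamma}$ is compatible with the $\dz$-torsor structure on $[n_1, n_2]$ (both come from the same groupoid action), so the map $h \mapsto m(h)$ commutes with the $\dz$-action on this torsor. Consequently $m(h) - h$ is a constant $c = c_m(n_1, n_2) \in \dz$, independent of the chosen $h \in [n_1, n_2]$, and the entire content of the lemma is to evaluate this constant and check that it equals $\frac{1}{2}m(n_1 + n_2 - 1)(n_1 - n_2)$.

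Next I would unwind the definition of $[n_1, n_2]$. Assuming first $n_1 \ge n_2$ (the case $n_1 = n_2$ being trivial, and the case $n_1 < n_2$ following by passing to the dual torsor $[n_1, n_2] = [n_2, n_1]^*$, which negates the constant in agreement with the antisymmetry of the claimed formula), one has $R_{n_1, n_2} = \coprod_{n_2 \le k \le n_1 - 1} \pi^{-1}(k)$, a disjoint union of finitely many $\dz$-torsors $\pi^{-1}(k)$. Since $m \diamond (k, -\infty) = (k, -\infty)$ and $m \diamond (k, p) = (k, mk + p)$, the element $m$ acts on $R_{n_1, n_2}$ by a bijection $\sigma_m$ that fixes each fiber $\pi^{-1}(k)$ setwise and translates it by $mk$. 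Because only finitely many fibers occur, $\sigma_m$ changes any reference set of the family $\tau$ by a finite set, so $\sigma_m$ preserves $\tau$ and induces the required action on the torsor $[R_{n_1, n_2}]$, given on counting functions by $(m(d))(A) = d(\sigma_m^{-1}(A))$.

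Then I would evaluate $c$ on a convenient representative. Fixing a set-theoretic section $Y$ and the associated half-line $P = (\dz_{\ge 0} + Y) \cap R_{n_1, n_2}$, I use the element $d_P \in [R_{n_1, n_2}]$ defined by $d_P(A) = |A \setminus P| - |P \setminus A|$, which satisfies $d_P(P) = 0$. Then $c = (m(d_P))(P) - d_P(P) = d_P(\sigma_m^{-1}(P))$. In the fiber over $k$ the set $P$ is a half-line and $\sigma_m^{-1}(P)$ is that same half-line shifted by $-mk$, so its signed overlap with $P$ contributes exactly $mk$ to $d_P$ (one checks both signs $mk \ge 0$ and $mk < 0$ give $mk$). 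Summing over $n_2 \le k \le n_1 - 1$ yields $c = m \sum_{k = n_2}^{n_1 - 1} k = \frac{1}{2} m (n_1 + n_2 - 1)(n_1 - n_2)$, which is the assertion.

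The main obstacle is the bookkeeping of conventions rather than any genuine difficulty: I must pin down the direction of the induced action on the torsor (the $\sigma_m^{-1}$ versus $\sigma_m$ choice) and the sign of the per-fiber shift so that the final constant is $+\frac{1}{2}m(n_1 + n_2 - 1)(n_1 - n_2)$ and not its negative, and verify that this convention is consistent with the duality $[n_1, n_2] = [n_2, n_1]^*$ used in the case $n_1 < n_2$. Once these signs are fixed, the per-fiber cardinality count and the arithmetic-series summation are entirely routine.
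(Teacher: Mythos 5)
Your proof is correct and follows essentially the same route as the paper's: both arguments reduce to the observation that $m$ shifts the fiber contribution over $k$ by $mk$ and then sum the arithmetic progression $\sum_{k=n_2}^{n_1-1} mk$. The only difference is presentational --- the paper decomposes $[n_1,n_2]$ as $\bigotimes_k [k+1,k]$ and asserts $m(s)=mk+s$ on each factor ``by construction,'' whereas you verify that same per-fiber shift directly from the counting-function definition of $[R_{n_1,n_2}]_\tau$ by evaluating on a reference half-line, which amounts to filling in the step the paper leaves implicit.
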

\begin{proof}
We suppose that $n_1 \ge n_2$. (The case $n_1 < n_2$ follows from the case $n_1 \ge n_2$.)
We have that
$$
[n_1, n_2]= [n_1, n_1 -1] \otimes_{\dz} [n_1-1, n_1 -2] \otimes_{\dz}  \ldots \otimes_{\dz} [n_2 +1,n_2] \, \mbox{.}
$$
Therefore it is enough to calculate the action of the group $\dz =C$ on the $\dz$-torsor  $[k+1, k]$, where $k \in \dz= \pi(\Gamma)$. We have by construction of $[k+1, k]$ that
$$
m (s)= mk +s  \, \mbox{,} \quad \mbox{where}  \quad  m \in \dz=C \, \mbox{,} \quad  s \in [k+1, k]  \, \mbox{.}
$$
Hence and using the arithmetic progression,  we obtain
$$
m(h) = m(n_2 + (n_2 +1) + \ldots + (n_1 -1)) + h =  \frac{1}{2}m (n_1 +n_2 -1)(n_1 - n_2) + h \, \mbox{.}
$$
\end{proof}

\begin{Th} \label{quadruple}
Using the isomorphism which is inverse to isomorphism from item~\ref{iso-Heis}  of Proposition~\ref{prop-Heis} (after  fixing a section of the homomorphism $\pi$)
we present any element from the group $\hat{\Gamma}_{(0, -\infty)}$ as a quadruple $(a,b,c,m)$, where $(a,b,c) \in {\rm Heis}(3, \dz)$ and $m \in \dz$. Then, in this presentation, the group  $\hat{\Gamma}_{(0, -\infty)}$ is a set of integer quadruples with the group law written as:
$$
(a_1, b_1, c_1, m_1)(a_2, b_2. c_2, m_2) = (a_1 +a_2 + m_1 b_2, b_1 + b_2, c_1 +c _2 + a_1 b_2 + \frac{1}{2} b_2 (b_2 -1) m_1, m_1 + m_2 )  \, \mbox{.}
$$
\end{Th}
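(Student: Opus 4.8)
The plan is to unwind the semidirect‑product group law of $\hat{\Gamma}_{(0,-\infty)} = \tilde{\Gamma}_{(0,-\infty)} \rtimes \dz$ by transporting everything through the explicit isomorphism $\Psi \colon {\rm Heis}(3,\dz) \to \tilde{\Gamma}_{(0,-\infty)}$, $(a,b,c) \mapsto ((b\oplus a), -c + d_b)$, of item~\ref{iso-Heis} of Proposition~\ref{prop-Heis}. Under the inverse of $\Psi$ a quadruple $(a,b,c,m)$ corresponds to the pair $(\Psi(a,b,c), m)$, and by the definition of the semidirect product the product of $(f_1,m_1)$ and $(f_2,m_2)$ equals $(f_1 \cdot m_1(f_2),\, m_1 + m_2)$, where $m_1(f_2)$ is the result of the action of $m_1 \in C = \dz$ on $f_2 \in \tilde{\Gamma}_{(0,-\infty)}$. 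Thus the whole computation reduces to evaluating this action explicitly and then multiplying in $\tilde{\Gamma}_{(0,-\infty)}$, i.e.\ in ${\rm Heis}(3,\dz)$ via $\Psi$.

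First I would pin down the automorphism of $\tilde{\Gamma}_{(0,-\infty)}$ induced by $m_1$. Since $m_1 \diamond (0,-\infty) = (0,-\infty)$, the factor $\Lambda_{(0,-\infty),\, m_1 \diamond (0,-\infty)}$ is the identity, so the automorphism is simply $T_{m_1,(0,-\infty)}$, acting by $T_{m_1}((\beta,h)) = (m_1 \diamond \beta,\, m_1(h))$. Writing $f_2 = \Psi(a_2,b_2,c_2) = ((b_2 \oplus a_2),\, -c_2 + d_{b_2})$, I would compute the two components separately: from the explicit rule $k \diamond (n \oplus p) = (n \oplus (kn+p))$ one gets $m_1 \diamond (b_2 \oplus a_2) = (b_2 \oplus (m_1 b_2 + a_2))$, and from Lemma~\ref{Calcul} with $n_1 = 0$, $n_2 = b_2$ one gets $m_1(-c_2 + d_{b_2}) = -\frac{1}{2} m_1 b_2(b_2-1) - c_2 + d_{b_2}$. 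Re‑expressing through $\Psi$ yields
$$
m_1(f_2) = \Psi\left(a_2 + m_1 b_2,\; b_2,\; c_2 + \tfrac{1}{2} m_1 b_2(b_2-1)\right).
$$

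Next I would multiply $f_1 \cdot m_1(f_2)$ inside $\tilde{\Gamma}_{(0,-\infty)}$. Because $\Psi$ is a group isomorphism, this equals $\Psi$ applied to the ${\rm Heis}(3,\dz)$‑product of $(a_1,b_1,c_1)$ with the quadruple above; carrying out this product with the Heisenberg law $(a,b,c)(a',b',c') = (a+a',\, b+b',\, c+c'+ab')$ produces the first three entries of the asserted quadruple, and appending $m_1 + m_2$ as the fourth entry gives the stated group law.

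The genuinely substantive input is the quadratic correction $\frac{1}{2} b_2(b_2-1)m_1$, which is precisely the content of Lemma~\ref{Calcul}: the ``loop rotation'' action of $C = \dz$ on the morphism torsor $[0,b_2]$ is nontrivial and shifts by the arithmetic‑progression sum $m_1\bigl(0 + 1 + \cdots + (b_2-1)\bigr)$. Everything else is bookkeeping — verifying that the two ingredients assemble correctly through $\Psi$, tracking the sign conventions in $-c + d_b$ and in the diamond action, and noting that the coefficient is always an integer since $b_2(b_2-1)$ is even, so the law stays within integer quadruples. I would therefore highlight Lemma~\ref{Calcul} as the crux and treat the remaining assembly as a direct verification.
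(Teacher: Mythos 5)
Your proposal is correct and follows essentially the same route as the paper: both compute the action of $m_1\in C=\dz$ on $(a_2,b_2,c_2)$ through the isomorphism of item~\ref{iso-Heis} of Proposition~\ref{prop-Heis}, extract the quadratic correction $\tfrac12 m_1 b_2(b_2-1)$ from Lemma~\ref{Calcul} applied to $d_{b_2}\in[0,b_2]$, and then finish with the semidirect-product and Heisenberg group laws. Your explicit observation that $\Lambda_{(0,-\infty),\,m_1\diamond(0,-\infty)}$ is the identity (so the automorphism reduces to $T_{m_1,(0,-\infty)}$) is left implicit in the paper but is exactly what its computation uses.
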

\begin{proof}
Using the isomorphism from item~\ref{iso-Heis}  of Proposition~\ref{prop-Heis} we calculate the action of the group $\dz=C$
on the group ${\rm Heis}(3, \dz)$ through the action on the group $\tilde{\Gamma}_{(0, -\infty)}$ described in section~\ref{act-ext-heis}.
For any $(a,b,c) \in {\rm Heis}(3, \dz)$ and for any $m \in \dz$ we have the action of $m$ on $(a,b,c)$:
\begin{multline}
m \left((a,b,c) \right)= m\left( (b \oplus a ), -c + d_b \right)= ( (b \oplus a + mb), -c +   m(d_b) )  \stackrel{\mbox{ \scriptsize Lemma}~\ref{Calcul}}{=} \\
=( (b \oplus a + mb), -c - \frac{1}{2}b (b-1) m + d_b )=(a + mb, b, c + \frac{1}{2}b (b-1) m   )   \, \mbox{,}   \label{m-act}
\end{multline}
where the element $d_b \in [0,b]$ was defined in  Proposition~\ref{prop-Heis}.

Now, using this calculation,  we have
\begin{multline*}
(a_1, b_1, c_1, m_1) (a_2, b_2, c_2, m_2) = ( (a_1, b_1, c_1 ) \  m_1 ((a_2, b_2, c_2)) , m_1 +m_2 ) = \\
=
((a_1, b_1, c_1 )  (a_2 + m_1 b_2, b_2, c_2 + \frac{1}{2}b_2 (b_2 -1) m_1 ), m_1 + m_2 )= \\=
(a_1 +a_2 + m_1 b_2,  b_1 + b_2,  c_1 +c _2 + a_1 b_2 + \frac{1}{2} b_2 (b_2 -1) m_1, m_1 + m_2   )   \, \mbox{.}
\end{multline*}
\end{proof}

\medskip

Let $G$ be a group of integer quadruples $(a,b,c,m)$ with the group law given in Theorem~\ref{quadruple}.

\medskip

\begin{nt} \label{section} \em
The isomorphism between the group $\hat{\Gamma}_{(0, -\infty)}$ and the group $G$ constructed in Theorem~\ref{quadruple}
depends on the choice of a section $s$ of the homomorphism $\pi$ from~\eqref{centr}. We denote this isomorphism by $\Upsilon_s$.
The group $C = \dz$ acts naturally on the set of sections of the homomorphism $\pi$.
Moreover, the set of all sections of the homomorphism $\pi$ is a  $C$-torsor. Now from construction of homomorphisms $\Upsilon_s$ and calculations
from formula~\eqref{m-act} we obtain that the following diagram is commutative for any section $s$ of the homomorphism $\pi$ and any $k \in \dz=C$:
 $$
 \xymatrix{
& \quad \hat{\Gamma}_{(0, -\infty)} \ar[dl]_{\Upsilon_s}  \ar[dr]^{\Upsilon_{-k+s}} & \\
G \ar[rr]_{\phi_{k}} && G
}
 $$
 where the automorphism $\phi_k$ is obtained from formula~\eqref{m-act}:
 $$
 \phi_k((a,b,c,m))= ( k((a,b,c)), m )= (a + kb, b, c + \frac{1}{2}b (b-1) k, m   )  \, \mbox{.}
 $$
\end{nt}

Using  Theorem~\ref{quadruple} we can describe properties of the extended discrete Heisenberg group.

\begin{prop}
We have the following properties of the group $G$.
\begin{enumerate}
\item  \label{iit1}
 The group G is isomorphic to a subgroup
of the group ${\rm GL}(4, \dz)$ via a map:
$$
(a,b,c,m)  \longmapsto
\begin{pmatrix}
1 & m & a & c \\
0 & 1 & b & \frac{1}{2}b(b-1) \\
0 & 0 & 1 & b \\
0 & 0 & 0 & 1
\end{pmatrix}
$$
\item  \label{iit2} The center $Z(G)$ of $G$ is isomorphic to $\dz \simeq \{ (0,0,c,0) \}$.
\item \label{iit3} The lower central series is:
$$
[G,G]= \{(a,0,c,0     )\} \simeq \dz \oplus \dz  \, \mbox{,}
\qquad \qquad
[[G,G], G] = {Z}(G)  \, \mbox{,}
$$
$$
\mbox{and}  \qquad
G / [G,G]  \simeq \dz \oplus \dz   \, \mbox{,} \qquad \qquad [G,G] / [[G,G],G ]  \simeq \dz  \, \mbox{.}
$$
 \end{enumerate}
\end{prop}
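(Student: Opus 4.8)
The plan is to prove all three items by direct computation from the explicit group law for $G$ given in Theorem~\ref{quadruple}, organising the work so that items~\ref{iit2} and~\ref{iit3} reduce to a handful of commutator computations.

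For item~\ref{iit1}, I would verify that the assignment $M\colon(a,b,c,m)\mapsto$ the $4\times 4$ matrix displayed in item~\ref{iit1} is a homomorphism, by multiplying two such matrices and comparing with the group law. First I note that each matrix genuinely lies in ${\rm GL}(4,\dz)$: it is unipotent upper-triangular (hence of determinant $1$), and its only potentially non-integral entry $\frac{1}{2}b(b-1)$ is an integer because $b(b-1)$ is even. The single nontrivial point in the matrix product is the $(2,4)$-entry, which requires the identity
$$
\frac{1}{2} b_1(b_1-1)+b_1 b_2+\frac{1}{2} b_2(b_2-1)=\frac{1}{2} (b_1+b_2)(b_1+b_2-1)\, \mbox{;}
$$
all other entries reproduce the four coordinates of the product in Theorem~\ref{quadruple} immediately. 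Injectivity is clear, since $m,a,b,c$ can be read off from the $(1,2),(1,3),(2,3),(1,4)$ entries.

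For items~\ref{iit2} and~\ref{iit3}, the key observation is that $(a,b,c,m)\mapsto(b,m)$ is a homomorphism onto $\dz\oplus\dz$ (both $b$ and $m$ add coordinatewise in the group law), with kernel the subgroup $N=\{(a,0,c,0)\}$; moreover $N$ is abelian and isomorphic to $\dz\oplus\dz$, since on $N$ the group law degenerates to coordinatewise addition in $a$ and $c$. As $\dz\oplus\dz$ is abelian, this forces $[G,G]\subseteq N$ and exhibits $G/[G,G]$ as a quotient of $\dz\oplus\dz$. For the reverse inclusion $N\subseteq[G,G]$ I would compute two generating commutators, $[(1,0,0,0),(0,1,0,0)]=(0,0,1,0)$ and $[(0,0,0,1),(0,1,0,0)]=(1,0,-1,0)$; their product in $N$ is $(1,0,0,0)$, so together they yield both $(0,0,1,0)$ and $(1,0,0,0)$, hence all of $N$. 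This gives $[G,G]=N\simeq\dz\oplus\dz$ and therefore $G/[G,G]\simeq\dz\oplus\dz$.

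To finish item~\ref{iit3} and prove item~\ref{iit2}, I would compute the commutator of a general element of $N$ with a general element of $G$: using that $(0,0,1,0)$ is central, a short calculation gives
$$
[(a,0,c,0),(a',b',c',m')]=(0,0,ab',0)\, \mbox{.}
$$
Letting $a$ and $b'$ vary over $\dz$ shows $[[G,G],G]=[N,G]=\{(0,0,c,0)\}$. It then remains to identify this subgroup with the centre, which I would do by imposing that $(a,b,c,m)$ commute with the three generators $(1,0,0,0)$, $(0,1,0,0)$, $(0,0,0,1)$: commuting with the first (or the third) forces $b=0$, and commuting with the second forces $a=m=0$, so $Z(G)=\{(0,0,c,0)\}\simeq\dz$, coinciding with $[[G,G],G]$ as just computed. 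The computations are elementary; the only real bookkeeping hazard is the inhomogeneous term $\frac{1}{2}b(b-1)m$ in the third coordinate together with the nonsymmetric $m_1 b_2$ contribution in the first coordinate, which must be tracked carefully through the inverse formula $(a,b,c,m)^{-1}=(-a+mb,\,-b,\,-c+ab-\frac{1}{2}b(b+1)m,\,-m)$ and through each commutator. I expect these terms to be the main, though still routine, source of possible sign errors, so I would double-check every commutator against the matrix realisation of item~\ref{iit1}.
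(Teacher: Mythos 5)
Your proposal is correct and takes essentially the same route as the paper: both proofs proceed by direct computation with the group law of Theorem~\ref{quadruple} and rest on the same key commutators (your $[(0,0,0,1),(0,1,0,0)]=(1,0,-1,0)$ and $[(a,0,c,0),(a',b',c',m')]=(0,0,ab',0)$ are exactly the paper's identities $[(0,0,0,m),(0,b,0,0)]=(mb,0,-\frac{1}{2}mb(b+1),0)$ and $[(a,0,c,0),(0,b,0,0)]=(0,0,ab,0)$, which I checked against the group law). The only cosmetic difference is in how the containment $[G,G]\subseteq\{(a,0,c,0)\}$ is obtained: the paper cites the known lower central series of the group of upper unitriangular integer matrices, while you use the surjection $(a,b,c,m)\longmapsto(b,m)$ onto $\dz\oplus\dz$ with abelian kernel, which is a slightly more self-contained route to the same bound.
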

\begin{proof}
Items~\ref{iit1}-\ref{iit2} follow from direct computations with group laws.

For the proof of item~\ref{iit3} we use an explicit form of the lower central series for the group of upper triangular integer matrices with ones on the main diagonal: the $k$-th term of its lower central series is the set of all matrices from the group
such that the number of diagonals above the main diagonal that contain zeroes only is equal to $k-1$.
 Besides, we use the following identities:
$$
[(0,0,0,m), (0,b,0,0)]=(mb, 0, -\frac{1}{2}mb(b+1), 0 ) \, \mbox{,}
\qquad
[(a,0,c,0), (0,b,0,0)]= (0,0,ab, 0 )  \, \mbox{.}
$$
(The first identity follows easily from formula~\eqref{m-act}, the second identity is satisfied in the subgroup ${\rm Heis}(3, \dz)$.)

This gives the proof.
\end{proof}

\subsection{Extended discrete Heisenberg group and the Fourier transforms}

Let $\Gamma$ be a group as in the beginning of section~\ref{discr_2} (with the fixed central extension~\eqref{centr}).
We describe now the connection between Fourier transforms on the spaces of functions and distributions on the group $\Gamma$
and the action of the extended discrete Heisenberg group on these spaces.

We fix $r \in \dc^*$ and $\alpha \in \breve{\Gamma}$.

We recall that by Corollary~\ref{cons-me} of Proposition~\ref{inol-prop}   and by item~\ref{item-2} of Proposition~\ref{inol-prop}   we have  canonical isomorphisms of one-dimensional $\dc$-vector spaces for any $\gamma$ and   $\beta$ from  $\Gamma$:
$$
\mu_r(\alpha, \beta \circ \alpha) \simeq \mu_r(\alpha^{\perp(\gamma)}, (\beta \circ \alpha)^{\perp(\gamma)} )  \simeq
\mu_r(\alpha^{\perp(\gamma)},   (- \beta) \circ \alpha^{\perp(\gamma)} )   \, \mbox{.}
$$
We denote the composition of these isomorphisms as $\varepsilon$.

We need the following lemma.

\begin{lemma}  \label{lemma-last} Let $\gamma  \in  \pi^{-1}(0) \subset \Gamma$. Then we have properties.
\begin{enumerate}
\item  \label{i1}
For any $\beta \in \breve{\Gamma}$ and any $m \in \dz =C$ we have
 $(m \diamond \beta)^{\perp(\gamma)}= m \diamond \beta^{\perp(\gamma)}$  \, \mbox{.}
\item  \label{i2} For any $\alpha_1$ and $\alpha_2$ from $\breve{\Gamma}$ the following diagram is commutative
\begin{equation}  \label{diagram2}
 \xymatrix{
[\alpha_1, \alpha_2 ]  \ar[r]  \ar[d] & [\alpha^{\perp(\gamma)}, \beta^{\perp(\gamma)}] \ar[d]\\
[m \diamond  \alpha_1, m \diamond  \alpha_2 ]  \ar[r] & [  m \diamond \alpha_1^{\perp(\gamma)}, m \diamond \alpha_2^{\perp(\gamma)}]
}
 \end{equation}
where the horizontal arrows are isomorphisms which follow from item~\ref{item-3} of Proposition~\ref{inol-prop} and from item~\ref{i1} of this lemma,
and the vertical arrows are induced by the actions of the element $m \in \dz=C$.
 \item  \label{i3}
There is  a commutative diagram which is analogous to diagram~\ref{diagram2} when we change $\dz$-torsors $[ \cdot  , \cdot  ]  $ to corresponding one-dimensional $\dc$-vector spaces $\mu_r(\cdot, \cdot)$.
\end{enumerate}
\end{lemma}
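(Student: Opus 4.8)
The plan is to handle the three items in order, deriving items~\ref{i2} and~\ref{i3} from item~\ref{i1} together with the explicit construction of the involution isomorphism given in the proof of Proposition~\ref{inol-prop}. The single fact that makes the hypothesis $\gamma \in \pi^{-1}(0)$ essential is that it forces $m \diamond \gamma = \gamma$ for every $m \in \dz = C$, since $m \diamond (0, p) = (0, m \cdot 0 + p) = (0, p)$; this identity will be invoked in each item.

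First I would record two elementary properties of the action $\diamond$ that I will use repeatedly. It preserves $\breve{\pi}$ and the linear order on $\breve{\Gamma}$ (inside each fibre $\pi^{-1}(n)$ it is translation by $mn$, which is order-preserving, and $\breve{\pi}$ is unchanged), so $m \diamond$ is an order automorphism of $\breve{\Gamma}$ fixing $0$; and it is a homomorphism for $\circ$, i.e. $m \diamond (\alpha_1 \circ \alpha_2) = (m \diamond \alpha_1) \circ (m \diamond \alpha_2)$, as noted in section~\ref{act-ext-heis} (on $\Gamma$ itself $m \diamond$ is a group automorphism, since $C \subset \Aut \Gamma$). For item~\ref{i1} I substitute $\varphi = m \diamond \psi$ in the defining property of $(m \diamond \beta)^{\perp(\gamma)}$: by the homomorphism property and $m \diamond \gamma = \gamma$ one gets $\varphi \circ (m \diamond \beta) \circ \gamma = m \diamond (\psi \circ \beta \circ \gamma)$, and since $m \diamond$ is order-preserving and fixes $0$, the condition $\varphi \circ (m \diamond \beta) \circ \gamma > 0$ is equivalent to $\psi \circ \beta \circ \gamma > 0$. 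As $m \diamond$ is an order bijection it commutes with taking the minimum, so $(m \diamond \beta)^{\perp(\gamma)} = m \diamond \beta^{\perp(\gamma)}$.

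For item~\ref{i2} I would mirror the proof of item~4 of Proposition~\ref{inol-prop}, with item~\ref{i1} playing the role that item~\ref{item-2} plays there and with the explicit formula for isomorphism~\eqref{R-can}. It suffices to treat $\alpha_1 \ge \alpha_2$, the opposite case following by passing to dual $\dz$-torsors. Here $m \diamond$ carries $R_{\alpha_1, \alpha_2}$ bijectively onto $R_{m \diamond \alpha_1, m \diamond \alpha_2}$, the induced torsor action satisfies $m(d)(m \diamond A) = d(A)$, and the crucial compatibility is $(m \diamond A)^{\perp, \gamma} = m \diamond (A^{\perp, \gamma})$ for $A \in \tau(R_{\alpha_1, \alpha_2})$: unwinding $A^{\perp, \gamma} = \{\varphi \in \Gamma : \gamma \circ \varphi \circ \phi \ne 0 \ \forall \phi \in A\}$ and using $m \diamond \gamma = \gamma$ together with $m \diamond \in \Aut \Gamma$ gives $\gamma \circ (m \diamond \psi) \circ (m \diamond \phi) = m \diamond (\gamma \circ \psi \circ \phi)$, which vanishes iff $\gamma \circ \psi \circ \phi$ does. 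Feeding these identities into the pairing formula $f(d_2) = d_1(A) + d_2(A^{\perp, \gamma})$ defining~\eqref{R-can} and evaluating both paths around square~\eqref{diagram2} on an arbitrary $e_2 = m(d_2)$, one finds that the image of $m(d_1)$ and the element $m(f)$ both take the value $d_1(A) + d_2(A^{\perp, \gamma})$ (using the dual-torsor action $m(f)(m(d_2)) = f(d_2)$); hence the square commutes.

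Finally, item~\ref{i3} is obtained by applying the functor $[\,\cdot\,, \cdot\,] \rightsquigarrow \mu_r(\cdot, \cdot)$, that is $P \mapsto (P \times^{\dz} \dc^*) \cup \{0\}$ attached to $n \mapsto r^n$, to the commutative square of item~\ref{i2}: every arrow there is an isomorphism of $\dz$-torsors and therefore induces an isomorphism of the one-dimensional $\dc$-vector spaces $\mu_r$, the $m \diamond$-action on $\mu_r(\cdot, \cdot)$ being the one induced from the torsor action, and functoriality preserves commutativity. I expect the main obstacle to be the bookkeeping in item~\ref{i2} — keeping track of the dual $\dz$-torsor $[R_{\alpha_2^{\perp(\gamma)}, \alpha_1^{\perp(\gamma)}}]^*$ and of the dual action $m(f)(m(d_2)) = f(d_2)$ — but once the identity $(m \diamond A)^{\perp, \gamma} = m \diamond (A^{\perp, \gamma})$ is established the remaining verification is a direct substitution.
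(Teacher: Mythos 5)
Your proof is correct and follows the same route as the paper's: item~\ref{i1} comes from $m \diamond \gamma = \gamma$ together with the fact that $m\diamond$ is an order automorphism of $\breve{\Gamma}$ compatible with $\circ$, item~\ref{i2} from unwinding the explicit construction of isomorphism~\eqref{R-can} in the proof of Proposition~\ref{inol-prop} (your identity $(m \diamond A)^{\perp,\gamma} = m \diamond (A^{\perp,\gamma})$ is exactly the point the paper leaves implicit), and item~\ref{i3} by functoriality of $P \mapsto \left(P \times^{\dz} \dc^*\right) \cup \{0\}$. The paper's own proof is a three-line sketch of precisely this argument, so your write-up is simply a more detailed version of it.
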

\begin{proof}
We note that $m \diamond \gamma = \gamma$. Therefore  for any $\phi$ and $\psi$ from $\breve{\Gamma}$ we have that  $\phi \circ \psi \circ \gamma > 0$ if and only if
$$(m \diamond  \phi)  \circ (m \diamond \psi) \circ \gamma > 0  \mbox{.}$$
Hence we obtain item~\ref{i1}.

Item~\ref{i2} follows from the construction of isomorphism from item~\ref{item-3} of Proposition~\ref{inol-prop}.

Item~\ref{i3} follows from item~\ref{i2}.
\end{proof}

\begin{prop} \label{prop-last} There are isomorphisms.
\begin{enumerate}
\item \label{ite1}
Let $\gamma \in \Gamma$.
There is a canonical isomorphism of groups:
$$
\rho_{\alpha, \gamma} \; : \;   \tilde{\Gamma}_{\alpha}   \lrto \tilde{\Gamma}_{\alpha^{\perp(\gamma)}}
\qquad
\mbox{given as}
\qquad
(\beta, h)  \longmapsto (- \beta, \varepsilon( h))  \, \mbox{,}
$$
where $\beta \in \Gamma$, $h \in \mu_r(\alpha, \beta \circ \alpha)$,  and
$\varepsilon(h)  \in { \mu_r(\alpha^{\perp(\gamma)},   (- \beta) \circ \alpha^{\perp(\gamma)} ) }$.
\item \label{ite2} Let $\gamma  \in \pi^{-1}(0) \subset \Gamma$. There is a canonical isomorphism of groups:
$$
\varrho_{\alpha, \gamma} \; : \;   \hat{\Gamma}_{\alpha}   \lrto \hat{\Gamma}_{\alpha^{\perp(\gamma)}}
\qquad
\mbox{given as}
\qquad
(f, m)  \longmapsto (\rho_{\alpha, \gamma}(f), m)  \, \mbox{,}
$$
where $f \in \tilde{\Gamma}_{\alpha}$ and $m \in \dz$.
\end{enumerate}
\end{prop}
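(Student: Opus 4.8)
The plan is to verify that $\rho_{\alpha,\gamma}$ and $\varrho_{\alpha,\gamma}$ are well-defined group isomorphisms, reducing everything to the compatibilities already recorded in Proposition~\ref{inol-prop}, Corollary~\ref{cons-me}, and Lemma~\ref{lemma-last}. For item~\ref{ite1}, I would first check well-definedness. An element $(\beta,h)\in\tilde{\Gamma}_{\alpha}$ carries $h$ in the $\dz$-torsor $[\alpha,\beta\circ\alpha]$, regarded inside $\mu_r(\alpha,\beta\circ\alpha)$ through $\vartheta_r$. Since $\varepsilon$ is assembled from the torsor isomorphism~\eqref{ab} together with the identification $(\beta\circ\alpha)^{\perp(\gamma)}=(-\beta)\circ\alpha^{\perp(\gamma)}$ of item~\ref{item-2} of Proposition~\ref{inol-prop}, it maps this embedded $\dz$-torsor isomorphically onto $[\alpha^{\perp(\gamma)},(-\beta)\circ\alpha^{\perp(\gamma)}]$, so $(-\beta,\varepsilon(h))$ is a genuine element of $\tilde{\Gamma}_{\alpha^{\perp(\gamma)}}$.

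For the homomorphism property I would expand $(\beta_1,h_1)(\beta_2,h_2)=(\beta_1+\beta_2,\,h_1\otimes\beta_1(h_2))$ and compare it, after applying $\rho_{\alpha,\gamma}$, with the product of the images in $\tilde{\Gamma}_{\alpha^{\perp(\gamma)}}$. Matching second components, the whole statement comes down to the identity
$$\varepsilon(h_1\otimes\beta_1(h_2)) = \varepsilon(h_1)\otimes(-\beta_1)(\varepsilon(h_2)).$$
This splits into two facts: the multiplicativity of $\varepsilon$ for $\otimes$, which is the compatibility of Corollary~\ref{cons-me} with~\eqref{can_me} (equivalently of~\eqref{ab} with~\eqref{can}); and the relation $\varepsilon(\beta_1(h_2))=(-\beta_1)(\varepsilon(h_2))$, which is precisely the commutativity of diagram~\eqref{diagram} with $\phi=\beta_1$, $\alpha_1=\alpha$, $\alpha_2=\beta_2\circ\alpha$, transported to the spaces $\mu_r$. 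Bijectivity is then immediate: $\beta\mapsto-\beta$ and $\alpha\mapsto\alpha^{\perp(\gamma)}$ are involutions and the isomorphisms~\eqref{ab} are mutually inverse, so $\rho_{\alpha^{\perp(\gamma)},\gamma}\circ\rho_{\alpha,\gamma}={\rm id}$ supplies the inverse; restricting to $\beta=0$ shows $\rho_{\alpha,\gamma}$ is the identity on the central $\dz=[\alpha,\alpha]$ via~\eqref{can}, so it is an isomorphism of central extensions.

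For item~\ref{ite2}, recall that $\hat{\Gamma}_{\alpha}=\tilde{\Gamma}_{\alpha}\rtimes\dz$ with $(f_1,m_1)(f_2,m_2)=(f_1\,m_1(f_2),\,m_1+m_2)$, where $m$ acts as $\Lambda_{\alpha,m\diamond\alpha}\circ T_{m,\alpha}$. Since $\rho_{\alpha,\gamma}$ is already a homomorphism and the $\dz$-component is left untouched, the homomorphism property of $\varrho_{\alpha,\gamma}$ reduces to the single intertwining relation $\rho_{\alpha,\gamma}(m(f))=m(\rho_{\alpha,\gamma}(f))$ for all $m\in\dz=C$. Here the hypothesis $\gamma\in\pi^{-1}(0)$ is exactly what forces $m\diamond\gamma=\gamma$, hence $\perp(\gamma)$ commutes with the $\diamond$-action and $(m\diamond\beta)^{\perp(\gamma)}=m\diamond\beta^{\perp(\gamma)}$ (item~\ref{i1} of Lemma~\ref{lemma-last}); in particular the two candidate targets $(m\diamond\alpha)^{\perp(\gamma)}$ and $m\diamond\alpha^{\perp(\gamma)}$ coincide. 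I would then factor $m$ into $T_{m,\cdot}$ and $\Lambda_{\cdot,\cdot}$ and check the two resulting squares: the $T_{m,\cdot}$-square commutes because $\varepsilon$ intertwines the $m$-action on torsors (items~\ref{i2}--\ref{i3} of Lemma~\ref{lemma-last}), and the $\Lambda_{\cdot,\cdot}$-square commutes because $\varepsilon$ is compatible with $\otimes$ and with the $\Gamma$-action, exactly as used in item~\ref{ite1}. Bijectivity follows as before.

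I expect the main obstacle to be the index bookkeeping in the homomorphism identity for $\rho_{\alpha,\gamma}$: one must track which $\dz$-torsor each of $h_1$, $h_2$, $\beta_1(h_2)$ and their $\perp$-images lives in, and confirm that the identification $(\beta\circ\alpha)^{\perp(\gamma)}=(-\beta)\circ\alpha^{\perp(\gamma)}$ together with the associativity in~\eqref{can_me} align the source and target so that diagram~\eqref{diagram} can be invoked verbatim. Once the torsor indices are correctly matched the verification is purely formal, and item~\ref{ite2} then follows almost mechanically from Lemma~\ref{lemma-last}.
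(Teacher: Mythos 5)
Your proposal is correct and follows essentially the same route as the paper: item~1 is the definition of the group law combined with the compatibilities of Corollary~\ref{cons-me} (whose commutative diagram is exactly your identity $\varepsilon(\beta_1(h_2))=(-\beta_1)(\varepsilon(h_2))$), and item~2 reduces, as in the paper, to the intertwining relation $\rho_{\alpha,\gamma}(m(f))=m(\rho_{\alpha,\gamma}(f))$ supplied by Lemma~\ref{lemma-last}. Your write-up simply makes explicit the torsor bookkeeping that the paper leaves to the reader.
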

\begin{proof}
Item~\ref{ite1} follows from the definition of the group law and from Corollary~\ref{cons-me} of Proposition~\ref{inol-prop}.

For the proof of item~\ref{ite2} it is enough to prove that $\rho_{\alpha, \gamma} (m(f)) = m(\rho_{\alpha, \gamma}(f))$
for any $f \in \tilde{\Gamma}_{\alpha}$ and $m \in \dz$. This follows from item~\ref{i3} of Lemma~\ref{lemma-last}.
\end{proof}

We recall that the groups $\tilde{\Gamma}_{\alpha}$ and $\hat{\Gamma}_{\alpha}$ naturally act on the $\dc$-vector spaces
$\D_{+, \alpha, r}(\Gamma) $ and
$\D'_{+, \alpha, r}(\Gamma)$ (see section~\ref{action}  and formula~\eqref{actions-ex}). Using isomorphisms
$\rho_{\alpha, \gamma}$ (when $\gamma \in \Gamma$ is any element) and $\varrho_{\alpha, \gamma}$ (when $\gamma  \in  \pi^{-1}(0) \subset \Gamma$)
we obtain  natural actions  of the group  $\tilde{\Gamma}_{\alpha}$ (for any $\gamma \in \Gamma$) and the group  $\hat{\Gamma}_{\alpha}$ (for any $\gamma  \in  \pi^{-1}(0) \subset \Gamma$) on the $\dc$-vector spaces $\D_{+, \alpha^{\perp(\gamma)}, r}(\Gamma)$ and   $\D'_{+, \alpha^{\perp(\gamma)}, r}(\Gamma)$.

Now from the definition of Fourier transforms, from Corollary~\ref{cons-me} of Proposition~\ref{inol-prop}  and from Lemma~\ref{lemma-last}, we have a theorem.

\begin{Th} \label{th-last}
The Fourier transforms
$$
{\bf F}_{\gamma} \; : \; \D_{+, \alpha, r}(\Gamma)  \lrto \D_{+, \alpha^{\perp(\gamma)}, r}(\Gamma)     \qquad \mbox{and}  \qquad
{\bf F}_{\gamma} \; : \; \D'_{+, \alpha, r}(\Gamma)  \lrto \D'_{+, \alpha^{\perp(\gamma)}, r}(\Gamma)
$$
commute with the actions of the groups   $\tilde{\Gamma}_{\alpha}$  (when $\gamma \in \Gamma$ is any element)  and $\hat{\Gamma}_{\alpha}$
(when $\gamma  \in  \pi^{-1}(0) \subset \Gamma$).
\end{Th}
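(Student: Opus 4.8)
The plan is to verify the intertwining relation directly on the explicit generators $\delta_{\ge x}$ of the function spaces, and then to deduce the distribution case by duality. Fix $\varpi = (\beta, h) \in \tilde{\Gamma}_{\alpha}$ and $Q = \prod_k f_k \otimes \mu_k \in \D_{+, \alpha, r}(\Gamma)$. Since the action of $\tilde{\Gamma}_{\alpha}$ on $\D_{+, \alpha^{\perp(\gamma)}, r}(\Gamma)$ is defined by transport through $\rho_{\alpha, \gamma}$, the assertion to prove is ${\bf F}_{\gamma}(\varpi \circ Q) = \rho_{\alpha, \gamma}(\varpi) \circ {\bf F}_{\gamma}(Q)$, where $\rho_{\alpha, \gamma}(\varpi) = (-\beta, \varepsilon(h))$.

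First I would settle the index bookkeeping. Writing $\varpi \circ Q = \prod_j f'_j \otimes \mu'_j$ with $f'_j = \beta(f_{j - \pi(\beta)})$ and $\mu'_j = \beta(\mu_{j - \pi(\beta)}) \otimes \vartheta_r(h^{-1})$, one checks from $\mu_k = \eta_k \otimes \mu_{k-1}$ that the measures entering the Fourier transform of $\varpi \circ Q$ satisfy $\eta'_j = \beta(\eta_{j - \pi(\beta)})$: the factor $\vartheta_r(h^{-1})$ cancels in the ratio $\mu'_j / \mu'_{j-1}$ and $\beta$ is multiplicative for the $\otimes$ of~\eqref{can_me}. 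The component at position $-j - \pi(\gamma)$ on both sides then matches after setting $k = j - \pi(\beta)$, so it suffices to work fibrewise.

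The core is the identity ${\bf F}_{\eta'_j} \circ \beta = (-\beta) \circ {\bf F}_{\eta_k}$ on $\D_+(\pi^{-1}(k))$ with $k = j - \pi(\beta)$. Evaluating on $\delta_{\ge x}$, $x \in \pi^{-1}(k)$, and using $\beta(\delta_{\ge x}) = \delta_{\ge \beta \circ x}$ together with item~\ref{item-2} of Proposition~\ref{inol-prop}, that is $(\beta \circ x)^{\perp(\gamma)} = (-\beta) \circ x^{\perp(\gamma)}$, definition~\eqref{Ft} gives ${\bf F}_{\eta'_j}(\delta_{\ge \beta \circ x}) = \eta'_j(\beta \circ x)\, \delta_{\ge (-\beta) \circ x^{\perp(\gamma)}}$, whereas $(-\beta)({\bf F}_{\eta_k}(\delta_{\ge x})) = \eta_k(x)\, \delta_{\ge (-\beta) \circ x^{\perp(\gamma)}}$. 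These coincide because $\eta'_j(\beta \circ x) = \beta(\eta_k)(\beta \circ x) = \eta_k(x)$ by $\Gamma$-equivariance of the measures. The surviving one-dimensional factors then match by Corollary~\ref{cons-me} and the commutative square~\eqref{diagram}, which is precisely the statement that the canonical isomorphism $\mu_r(j, \alpha) \lrto \mu_r(j^{\perp(\gamma)}, \alpha^{\perp(\gamma)})$ carries $\beta(\mu_{k-1}) \otimes \vartheta_r(h^{-1})$ to the factor $(-\beta)(\mu_{k-1})$ tensored with the twist introduced by $\rho_{\alpha, \gamma}(\varpi)$. This proves the function statement for $\tilde{\Gamma}_{\alpha}$; the distribution statement follows either by the identical computation on the dual generators $\{a_{x,x}\}$ via~\eqref{Fp'}, or more economically by duality, since the two Fourier transforms are conjugate for the pairing of Proposition~\ref{p} and the actions on functions and distributions are adjoint for it (formula~\eqref{inv_pair}), so equivariance on functions forces it on distributions.

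For the extended group $\hat{\Gamma}_{\alpha}$ I would additionally check that ${\bf F}_{\gamma}$ commutes with the $C = \dz$-action, which is legitimate here since $\varrho_{\alpha, \gamma}(f, m) = (\rho_{\alpha, \gamma}(f), m)$ leaves the $C$-coordinate fixed. Because $\gamma \in \pi^{-1}(0)$, item~\ref{i1} of Lemma~\ref{lemma-last} gives $(m \diamond x)^{\perp(\gamma)} = m \diamond x^{\perp(\gamma)}$ and item~\ref{i3} gives the matching compatibility of the spaces $\mu_r(\cdot, \cdot)$; the same basis computation, now with $m \diamond x$ in place of $\beta \circ x$, yields ${\bf F}_{\gamma}(m \diamond Q) = m \diamond {\bf F}_{\gamma}(Q)$, first for $\alpha = (0, -\infty)$ and then for general $\alpha$ by the reduction of Lemma~\ref{well-def}. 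Combining this with the $\tilde{\Gamma}_{\alpha}$-equivariance and formula~\eqref{actions-ex} gives the $\hat{\Gamma}_{\alpha}$ statement. The main obstacle I expect is not conceptual but the careful matching of the one-dimensional $\mu_r$-factors through the chain of canonical isomorphisms ($\varepsilon$, Corollary~\ref{cons-me}, and~\eqref{diagram}): one must confirm that the twist $\vartheta_r(h^{-1})$ transforms into the twist of $\rho_{\alpha, \gamma}(\varpi)$ and that $\beta(\mu)$ transforms into $(-\beta)(\mu)$ coherently, so that no spurious power of $r$ survives. Everything else is the forced computation on $\delta_{\ge x}$ above.
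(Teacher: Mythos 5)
Your proposal is correct and follows the same route the paper takes: the paper's entire proof is the one sentence preceding the theorem, asserting that it follows from the definition of the Fourier transforms, Corollary~\ref{cons-me} of Proposition~\ref{inol-prop}, and Lemma~\ref{lemma-last}, and your argument is exactly the fibrewise verification on the basis $\delta_{\ge x}$ (via item~\ref{item-2} of Proposition~\ref{inol-prop} and diagram~\eqref{diagram}) together with the $C=\dz$ check via Lemma~\ref{lemma-last} that these citations encode. You have simply written out the bookkeeping the authors leave implicit.
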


\begin{nt} \em
Item~\ref{ite2} of Proposition~\ref{prop-last}  and the part of Theorem~\ref{th-last} on the action of group $\hat{\Gamma}_{\alpha}$ is not true for arbitrary $\gamma \in \Gamma$. It is important that $m \diamond \gamma = \gamma$
for any $m \in \dz=C$.
\end{nt}

\vspace{0.3cm}

\noindent
D. V. Osipov

\noindent Steklov Mathematical Institute of Russsian Academy of Sciences, 8 Gubkina St., Moscow 119991, Russia, {\em and}

\noindent National Research University Higher School of Economics, Laboratory of Mirror Symmetry,  6 Usacheva str., Moscow 119048, Russia,
{\em and}

\noindent National University of Science and Technology ``MISiS'',  Leninsky Prospekt 4, Moscow  119049, Russia

\noindent {\it E-mail:}  ${d}_{-} osipov@mi{-}ras.ru$

\vspace{0.6cm}

\noindent
A. N. Parshin

\noindent Steklov Mathematical Institute of Russsian Academy of Sciences,

\noindent  8 Gubkina St., Moscow 119991, Russia

\noindent {\it E-mail:}  $parshin@mi{-}ras.ru$

\end{document}